  \providecommand\BibTeX{{%
    \normalfont B\kern-0.5em{\scshape i\kern-0.25em b}\kern-0.8em\TeX}}}
\def\P{\mathbb{P}}
\def\et{\textit{et al.}}
\newenvironment{proof}{Proof:}{\hfill$\square$}
	\newcommand{\Dcal}   {{\mathcal D }}
	\newcommand{\Kcal}   {{\mathcal K }}
	\newcommand{\Vcal}   {{\mathcal V }}
\def\R{{\mathbb{R}}}
\newcommand{\Rbar}{\bar{\mathbb {R}}}
\def\N{{\mathbb{N}}}
\def\Z{{\mathbb{Z}}}
\def\E{{\mathbb{E}}}
	\newcommand{\supp}{{\operatorname {supp}\,}}
	\newcommand{\ess}{{\operatorname {ess}}}
\newtheorem{theorem}{Theorem}
\newtheorem{lemma}[theorem]{Lemma}
\newtheorem{corollary}[theorem]{Corollary}
\newcommand{\1}{{\mathchoice {1\mskip-4mu\mathrm l}      
{1\mskip-4mu\mathrm l}
{1\mskip-4.5mu\mathrm l} {1\mskip-5mu\mathrm l}}}
\renewcommand*{\p@section}{\S\,}
\renewcommand*{\p@subsection}{\S\,}
\renewcommand*{\p@subsubsection}{\S\,}
\tikzset{
    between/.style args={#1 and #2}{
         at = ($(#1)!0.5!(#2)$)
    }
}
\begin{document}

\title{Poly-Exp Bounds in Tandem Queues}


\author{\IEEEauthorblockN{Florin Ciucu and Sima Mehri}\\\IEEEauthorblockA{University of Warwick}}

\maketitle

\begin{abstract}
When the arrival processes are Poisson, queueing networks are well-understood in terms of the product-form structure of the number of jobs $N_i$ at the individual queues; much less is known about the waiting time $W$ across the whole network. In turn, for non-Poisson arrivals, little is known about either $N_i$'s or $W$.

This paper considers a tandem network
$$GI/G/1\rightarrow \cdot/G/1\rightarrow\dots\rightarrow\cdot/G/1$$
with general arrivals and light-tailed service times. The main result is that the tail $\P(W>x)$ has a polynomial-exponential (Poly-Exp) structure by constructing upper bounds of the form
$$(a_{I}x^{I}+\dots+a_1x+a_0)e^{-\theta x}~.$$
The degree $I$ of the polynomial depends on the number of bottleneck queues, their positions in the tandem, and also on the `light-tailedness' of the service times. The bounds hold in non-asymptotic regimes (i.e., for \textit{finite} $x$), are shown to be sharp, and improve upon alternative results based on large deviations by (many) orders of magnitude. The overall technique is also particularly robust as it immediately extends, for instance, to non-renewal arrivals.
\end{abstract}

\section{Introduction}
A landmark result in queueing theory is the product-form structure of the number of jobs $N_i$ at the individual queues in steady-state, i.e.,
$$\P\left(N_1=n_1,\dots,N_M=n_M\right)= \prod_i\P\left(N_i=n_i\right)~.$$
This property was first proved in Jackson networks where external arrivals are Poisson processes, service times are exponentially distributed, the scheduling at each queue is FIFO, and the jobs are probabilistically routed through the network. Several major generalizations (e.g., BCMP or Kelly networks) allow for instance for more general service time distributions or other scheduling algorithms; such results are covered in most books on queueing theory.

What is remarkable about the product-form structure is that queues seemingly behave as if they were statistically independent, even if generally are not; in fact, even in the case of a Jackson network, queues are not independent when jobs are routed back to some of the already traversed queues. The key benefit of the product-form result is that it immediately lends itself to the distributions of the marginal queue sizes $N_i$ by taking the limit $n_j\rightarrow\infty$ for all $j\neq i$.

While each queue is therefore well-understood in isolation in product-form networks, much less in known about end-to-end waiting times. For instance, even in the $M/M/1\rightarrow M/M/1$ case, unlike the sojourn times at the two queues which are independent (Reich~\cite{Reich63}), the corresponding waiting times are not (Burke~\cite{Burke64}). The joint distribution of waiting times is known in the $M/M/1\rightarrow M/M/1$ case (Karpelevitch and Kreinin~\cite{Karpelevitch92}), as well as the distribution of the end-to-end waiting time in a more general case when the second queue is served by multiple servers (Kr\"{a}mer~\cite{Kraemer73}). These results were generalized for a large class of networks with Poisson arrivals by Ayhan and Baccelli~\cite{Ayhan01}, but only in terms of providing the joint Laplace-Stieltjes transform (LST) which generally requires numerical inversions; explicit expression for the distribution of individual waiting times in such networks were later obtained by Ayhan and Seo~\cite{Ayhan02}.

Similar results on sojourn times exist with a notable exception. A very general result is a product-form expression for the joint LST over a so-called quasi overtake-free path in both open and closed product-form networks (e.g., BCMP). This is stated in the survey of Boxma and Daduna~\cite{Boxma90} (see Theorem 2.4) and generalizes prior results for open or closed networks; the same survey additionally addresses the issue of numerical computations of inverting the LST, and also several approximation techniques in both product and non-product form networks. For a more focused survey on numerical computations of sojourn times' quantiles see Harrison and Knottenbelt~\cite{Harrison04}. As mentioned earlier, unlike waiting times in a $M/M/1\rightarrow M/M/1$ tandem, sojourn times are independent. This exceptional property immediately extends to feedforward/tree Jackson networks, and as an immediate consequence the end-to-end sojourn time has an Erlang distribution (as the sum of independent exponential random variables). Walrand and Varaiya \cite{Walrand80} generalized this result to open Jackson networks subject to non-overtaking paths.

All previous results, along with many related others (see also the comprehensive survey by Disney and K\"{o}nig~\cite{Disney85}) have two common features: they are exact but are restricted to Poisson arrivals. One effective approach to relax the Poisson assumption, and yet remain in the realm of exact results, is large deviations. Ganesh~\cite{Ganesh98} studies the sojourn times $D$ in tandem queues under essentially the same general conditions as in this paper; the main result is that $D$ decays exponentially
$$\P\left(D> x\right)\approx e^{-\theta x}~,$$
for some asymptotic decay rate $\theta>0$; more formally $\lim_{x\rightarrow\infty}\frac{\log \P\left(D> x\right)}{x}=-\theta$, i.e., the result is \textit{asymptotically} exact.

An alternative approach is network calculus ~\cite{HLiu03,Book-Chang,Fidler06}. While intrinsically also based on large deviations, a key feature of network calculus is to effectively collect all terms preceding the exponential rather than discarding them by invoking the limit $\lim_{x\rightarrow\infty}\frac{\log \P\left(D> x\right)}{x}$. In this way, non-asymptotic bounds on the tail of $D$ (and also $W$) follow in a more or less straightforward manner; some explicit derivations are later provided in~\ref{sec:nc}. While such results could be obtained for broad classes of arrival processes, including non-renewal or heavy-tailed processes (e.g.,~\cite{Fidler06,LiBuCi12}), their proverbial drawback is the poor numerical accuracy.

The goal of this paper is to provide a sharp and non-asymptotic analysis for the end-to-end waiting time $W$ in queueing networks with non-Poisson arrivals. We consider in particular the $GI/G/1\rightarrow \cdot/G/1\rightarrow\dots\rightarrow\cdot/G/1$ tandem network with $M$ queues, general arrivals, and light-tailed service times (i.e., having some finite moment generating functions). Besides the non-Poisson nature of the input, what makes the analysis of $\P(W>x)$ exceptionally difficult is the structure of $W$ in terms of $M$ nested random walks.

At a very high level, our approach follows the standard $GI/G/1$ analysis of formulating and solving an integral/renewal equation. The crucial difference is that rather than using the very distribution $\P(W\leq x)$ as the integrand, we first decompose $W$ into two `suitable' random walks and use their joint distribution as the integrand. The obtained renewal equation enables the existence of polynomial-exponential (poly-exp) type of solutions, which immediately lend themselves to poly-exp bounds on $\P(W>x)$. For illustration purposes we provide explicit constructions for the $GI/M/1\rightarrow \cdot/M/1$ tandem; while the underlying computations involve elementary integration the overall analysis is tedious. Numerical results highlight that the bounds are not only sharp, but also that their poly-exp structure is instrumental in capturing the concave behavior of $\P(W>x)$, on the log-log scale, at small $x$.

In the following we first analyze a tandem of two queues -- to convey the essential ideas in a notation-light setting. The general case of $M$ queues will follow in~\ref{sec:gc}; while the key ideas carry over from $M=2$, some technical generalizations are not straightforward for which reason we provide the full proofs. Next we explicitly analyze $GI/M/1\rightarrow \cdot/M/1$ and show numerical comparisons against both simulations and alternative bounds based on large deviations. Lastly we discuss several immediate extensions and open problems, and summarize the paper. An Appendix contains auxiliary technical results and several remaining proofs.

\section{Simplest Case: Two Queues ($M=2$)}\label{sec:m2}
To ease the understanding of our overall approach we start with the case of two queues in tandem. The general and expectedly more tedious case of $M$ queues will be covered in~\ref{sec:gc}.

	\begin{figure}[h]
		\centering
		\includegraphics[width=0.45\linewidth]{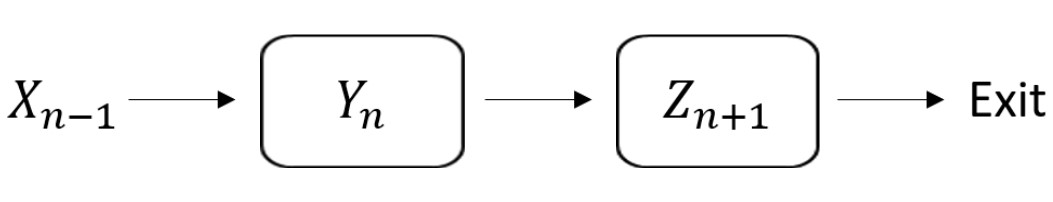}
		\caption{Inter-arrival and service times for job $n$ in a tandem of two queues}
		\label{fig:2queues}
	\end{figure}

We consider the tandem $GI/G/1\rightarrow\cdot/G/1$. Jobs $0,1,2,\dots$ traverse the two queues, each with infinite capacity; job $0$ arrives at time $0$ and job $n$ arrives at time $X_0+\cdots+X_{n-1}$ \footnote{By convention, all sums from the paper where the last index precedes the first are zero, e.g., $X_0+X_{-1}=0$.}. The service times of job $n$ at the first and second queues are $Y_n$ and $Z_{n+1}$, respectively; see Fig.~\ref{fig:2queues}. All sequences $(X_n)$, $(Y_n)$, and $(Z_n)$ are i.i.d. and mutually independent. We also assume the stability condition $\E[X_0]>\max\{\E[Y_1],\E[Z_2]\}$.

The service times are assumed to be \textit{light-tailed}, i.e., they admit finite moment generating functions. Concretely, for the first queue, we assume that $\theta^+:=\sup\{\theta>0:\E[e^{\theta (Y-X)}]< \infty\}\in(0,\infty]$. If $\E[e^{\theta^+ (Y-X)}]\geq 1$, which covers most cases of interest, then $\E[e^{\theta (Y-X)}]=1$ admits a unique solution which dictates the asymptotic decay rates of waiting/sojourn times (\cite{Kingman64}). Otherwise, if $\E[e^{\theta^+ (Y-X)}]<1$, then the asymptotic decay rate would be $\theta^+$; for an example of such a `very-light' distribution see Appendix~\ref{app:vld}.

The main metric of interest is the sum $W$ of the waiting times of \textit{some} job across the tandem.

\subsection{Sums of Waiting Times vs Product-Form Results}
The fundamental difficulty in treating sums of waiting times, as opposed to product-form results, can be understood from Burke's proof that the former are not independent~\cite{Burke64}; a similar argument was informally made by Harrison and Knottenbelt~\cite{Harrison04}.

Consider the $M/M/1\rightarrow M/M/1$ special case in steady-state and denote by $N_1(t)$ and $N_2(t)$ the number of jobs in the two queues at time $t$, and by $W_1$ and $W_2$ the waiting times of \textit{some} job. Burke showed that
$$\P(W_2=0\mid W_1=0)>\P(W_2=0)$$
by explicitly computing the left term; the right is simply $\P(N_2(t)=0)$ by using the Arrival Theorem for Jackson networks (see, e.g., Walrand~\cite{Walrand89}, p. 73).

Denoting by $T$ and $S$ the arrival and service times of a job experiencing zero waiting time in the first queue, the previous relation becomes
$$\P\left(N_2(T+S)=0\mid N_1(T)=0\right)>\P(N_2(T+S)=0)~,$$
i.e., $N_1(T)$ and $N_2(T+S)$ are not independent, even if $T$ is a stopping time independent of $S$.

It should now be apparent that a key difficulty in jointly dealing with $W_1$ and $W_2$ is the equivalence with jointly and implicitly dealing with $N_1$ and $N_2$ at \textit{different} (random) times where independence lacks. This is unlike the product-form result dealing with $N_1$ and $N_2$ at the \textit{same} time
$$\P(N_1(t)=n_1,N_2(t)=n_2)=\P(N_1(t)=n_1)\P(N_2(t)=n_2)~.$$

\subsection{A novel representation of sums of waiting times}
We proceed by first computing the exit time of job $n$ from the second queue (i.e., from the whole tandem)
	\[\max_{0\leq i< j\leq n+1}X_0+\cdots+X_{i-1}+Y_{i}+\cdots+Y_{j-1}+Z_{j}+\cdots+Z_{n+1}~,\]
which follows by a typical argument involving Lindley's recursions.

For convenience of future notation we change all indexes from $k$ to $n+2-k$; in particular, the service times of job $n$ become $Y_2$ and $Z_1$, the inter-arrival time between jobs $n$ and $n-1$ becomes $X_3$, the service times of job $n-1$ become $Y_3$ and $Z_2$, and so on. The exit time of job $n$ can then be rewritten as
	\[\max_{1\leq i<j\leq n+2}X_{n+2}+\cdots+X_{j+1}+Y_{j}+\cdots+Y_{i+1}+Z_{i}+\cdots+Z_{1}~.\]

Therefore, the waiting time of job $n$ in the tandem has the same distribution as
	\begin{align}W&=\max_{1\leq i<j\leq n+2}Z_1+\cdots+Z_{i}+Y_{i+1}+\cdots +Y_{j}+X_{j+1}+\cdots +X_{n+2} - \left(Z_1+Y_2+X_3+\cdots +X_{n+2}\right)\notag\\&=\max\left\{0,T^1+\left(Z_2-Y_2\right)_+, T^2+Z_2-Y_2\right\}~.\label{eq:W2nodes}\end{align}
We denote $u_+:=\max\left\{u,0\right\}$ and
	\[ T^1:=\max_{3\leq  i\leq n+2}U_3+\cdots+U_{i}\]
	\[ T^2:=\max_{3\leq  i<j\leq n+2}V_3+\cdots+V_{i}+U_{i+1}+\cdots+U_{j}~,\]
		where $V\simeq  Z-X$ and $U\simeq Y-X$ (`$\simeq$' stands for equality in distribution). Note that $T^2$ is undefined when $n=1$, and both $T^1$ and $T^2$ are undefined when $n=0$; in these cases, the corresponding undefined sums from the expression of $W$ are set to 0, e.g., $W=0$ when $n=0$.

The sojourn time (delay) of job $n$, i.e., the sum of the waiting and the two service times
$$D:= W+Z_1+Y_2$$
recovers in the limit $n\rightarrow\infty$ the standard representation (see Ganesh~\cite{Ganesh98} or Foss~\cite{Foss07})
$$\sum_{0\leq i\leq j}Z_0+\dots+Z_i+Y_i+\dots+Y_j-(X_0+\dots+X_{j-1})~,$$
by slightly re-indexing $(X_n)$, $(Y_n)$, and $(Z_n)$. Unlike this more compact representation, the key difference in our representation from (\ref{eq:W2nodes}) stands in avoiding the indexes' overlap `$i\leq j$' in the sum; we do so by separating $T^1$ from $T^2$, the latter involving a sum with `$i<j$'. As detailed shortly in~\ref{sec:ierw}, this separation is crucial in own overall approach.

In the limit $n\to\infty$ there exist unique stationary distributions for $W$ and $D$ (Loynes~\cite{Loynes64}); we will mainly address $W$ and make some occasional remarks about $D$.

\subsection{Generic Construction of Upper and Lower Bounds}
The representation of $W$ from (\ref{eq:W2nodes}) is an essential step in our overall approach. Its usefulness will become apparent after the next theorem which enables the construction of generic upper and lower bounds on $\P(W>x)$. First some additional notation: $u\wedge v:=\min\{u,v\}$ and $u\vee v:=\max\{u,v\}$.
\begin{theorem}\label{th:psigammaeta}
		Let $U$ and $V$ be  two random variables  satisfying $\P(U>0)>0$ and $\P(V>0)>0$.
		\begin{enumerate}[label=(\alph*)]
		\item The integral equation
\begin{equation}\label{equationpsi}
			\E\left[\1_{\{u\geq  U\}}\psi\left((u-U)\wedge (v-V), v-V \right)\right]=\psi(u,v)
		\end{equation}
admits a unique solution in the class of bounded functions on $\Rbar^2$  having the limit $\psi(\infty,\infty)=\lim_{u,v\to \infty}\psi(u,v)=1$. This is given by
$$\psi(u,v):=\P( T^1_{1}\leq u, T^{2}_{1}\leq v )~,$$ where
	 \[ T^1_k:=\sup_{k\leq  i<\infty}U_k+\cdots+U_{i}\]
		\[ T^2_k:=\sup_{k\leq  i<j<\infty}V_k+\cdots+V_{i}+U_{i+1}+\cdots+U_{j}\]
for $k\geq1$ and $(U_1, V_1)$, $(U_2, V_2),   \ldots $ being i.i.d. copies of $(U, V)$; by notation, $\Rbar:=\R\cup\{\pm\infty \}$.
	\item	Assume that the function $\gamma:\Rbar^2\to (-\infty,K_\gamma]$, for some finite $K_\gamma$, satisfies
	 for all  $(u,v)\in \supp(\gamma\vee 0)\subseteq \Rbar^2$
		\begin{equation}\label{pre-estimatepsi}
			\E\left[\1_{\{u\geq  U\}}\gamma\left((u-U)\wedge (v-V), v-V \right)\right]\geq \gamma(u,v)~.
		\end{equation}
	If $\gamma(\infty, \infty):=\limsup_{u,v\to \infty}\gamma(u,v)=1$  then $\psi\geq\gamma$.
		\item	Assume that the  function $\eta:\Rbar^2\to [0,\infty)$ satisfies
	for all  $(u,v)\in \supp(\psi)$ with $v\geq u$
	\begin{equation}\label{pre-estimateeta}
		\E\left[\1_{\{u\geq  U\}}\eta\left((u-U)\wedge (v-V), v-V\right)\right]\leq \eta(u,v)~.
	\end{equation}
	If
		$\eta(\infty, \infty):=\liminf_{u,v\to \infty}\eta(u,v)=1$ then $\psi(u,v)\leq\eta(u,v)$ for all $v\geq u$.
			\end{enumerate}
	\end{theorem}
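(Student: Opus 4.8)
The plan is to read the three parts as statements about the positive, linear ``Lindley'' operator
$$(\mathcal{T}f)(u,v):=\E\!\left[\1_{\{u\ge U\}}\,f\!\big((u-U)\wedge(v-V),\,v-V\big)\right],$$
for which the equation in (a) is exactly $\mathcal{T}\psi=\psi$, the hypothesis in (b) is $\mathcal{T}\gamma\ge\gamma$ on $\supp(\gamma\vee0)$, and the one in (c) is $\mathcal{T}\eta\le\eta$ on $\supp(\psi)\cap\{v\ge u\}$. The single computation that drives everything is the $n$-fold iterate: with i.i.d.\ copies $(U_k,V_k)_{k\ge1}$ of $(U,V)$ and the (random) recursion $u_0=u$, $v_0=v$, $u_k=(u_{k-1}-U_k)\wedge(v_{k-1}-V_k)$, $v_k=v_{k-1}-V_k$, a tower-property induction gives $(\mathcal{T}^nf)(u,v)=\E\!\left[\1_{A_n}f(u_n,v_n)\right]$ for every bounded $f$, where $A_n:=\{u_{k-1}\ge U_k:\ 1\le k\le n\}$. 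A further induction yields the closed forms $v_n=v-(V_1+\cdots+V_n)$ and $u_n=(u-\sigma^U_n)\wedge(v-W_n)$ with $\sigma^U_n:=U_1+\cdots+U_n$ and $W_n:=\max_{1\le j\le n}\big(V_1+\cdots+V_j+U_{j+1}+\cdots+U_n\big)$, together with $A_\infty:=\bigcap_nA_n=\{\sup_k\sigma^U_k\le u\}\cap\{\sup_{1\le j<k}(V_1+\cdots+V_j+U_{j+1}+\cdots+U_k)\le v\}=\{T^1_1\le u,\ T^2_1\le v\}$.

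For part (a) I would first verify that $\psi(u,v):=\P(T^1_1\le u,T^2_1\le v)$ solves $\mathcal{T}\psi=\psi$: the elementary one-step identities $T^1_1=U_1+(T^1_2)_+$ and $T^2_1=V_1+(T^1_2\vee T^2_2)$, where $(T^1_2,T^2_2)\stackrel{d}{=}(T^1_1,T^2_1)$ is independent of $(U_1,V_1)$, give $\{T^1_1\le u,T^2_1\le v\}=\{U_1\le u\}\cap\{T^1_2\le(u-U_1)\wedge(v-V_1)\}\cap\{T^2_2\le v-V_1\}$, and conditioning on $(U_1,V_1)$ yields the equation; $\psi$ is $[0,1]$-valued, and since the stability assumption makes both walks drift to $-\infty$ one has $T^1_1,T^2_1<\infty$ a.s.\ (the Loynes-type finiteness already invoked for $W,D$), so $\psi(\infty,\infty)=1$. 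For uniqueness, let $\phi$ be any bounded solution with $\lim_{u,v\to\infty}\phi=1$; then $\phi=\mathcal{T}^n\phi=\E[\1_{A_n}\phi(u_n,v_n)]$ for all $n$. The key point is $(u_n,v_n)\to(\infty,\infty)$ a.s.: $v_n\to+\infty$ by the SLLN, while a uniform strong law applied to the closed form for $u_n$ gives $\tfrac1nW_n\to\max_{0\le t\le1}\big(t\,\E[V]+(1-t)\,\E[U]\big)=\max(\E[U],\E[V])<0$, so $u_n=(u-\sigma^U_n)\wedge(v-W_n)\to+\infty$ as well. Dominated convergence (the integrand is bounded by $\|\phi\|_\infty$) then gives $\phi(u,v)=\lim_n\E[\1_{A_n}\phi(u_n,v_n)]=\P(A_\infty)$; the same argument applied to $\psi$ re-identifies $\psi=\P(A_\infty)$, hence $\phi=\psi$.

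For part (b), since $\gamma\le0\le\psi$ wherever $\gamma\le0$, one may replace $\gamma$ by $\gamma\vee0$ (still bounded above, still with $\limsup_{u,v\to\infty}=1$, and now nonnegative, which turns the hypothesis into $\mathcal{T}\gamma\ge\gamma$ on \emph{all} of $\Rbar^2$). Monotonicity of $\mathcal{T}$ then gives $\gamma\le\mathcal{T}\gamma\le\mathcal{T}^2\gamma\le\cdots$; since $\mathcal{T}^n\gamma(u,v)=\E[\1_{A_n}\gamma(u_n,v_n)]$ is dominated by $\|\gamma\|_\infty$, the reverse Fatou lemma with $(u_n,v_n)\to(\infty,\infty)$ and $\limsup_{u,v\to\infty}\gamma=1$ yields $\gamma(u,v)\le\lim_n\mathcal{T}^n\gamma(u,v)\le\E\!\big[\1_{A_\infty}\limsup_n\gamma(u_n,v_n)\big]\le\P(A_\infty)=\psi(u,v)$. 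For part (c), set $\bar h:=(\psi-\eta)^+\,\1_{\{v\ge u\}}$, which is $[0,1]$-valued. Because the update map always lands in $\{v'\ge u'\}$, $\mathcal{T}$ only sees $\bar h|_{\{v\ge u\}}=(\psi-\eta)^+$; combining $\psi=\mathcal{T}\psi$ (everywhere), $\mathcal{T}\eta\le\eta$ (on $\supp(\psi)\cap\{v\ge u\}$), $\psi=0$ off $\supp(\psi)$, and positivity/monotonicity of $\mathcal{T}$, one checks $\bar h\le\mathcal{T}\bar h$ on all of $\Rbar^2$, hence $\bar h\le\mathcal{T}^n\bar h=\E[\1_{A_n}\bar h(u_n,v_n)]$. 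On $A_\infty$, $(u_n,v_n)\to(\infty,\infty)$ forces $\psi(u_n,v_n)\to1$ and $\liminf_n\eta(u_n,v_n)\ge\liminf_{u,v\to\infty}\eta=1$, so $\limsup_n\bar h(u_n,v_n)=0$; reverse Fatou then gives $\lim_n\mathcal{T}^n\bar h=0$, whence $\bar h\equiv0$, i.e.\ $\psi\le\eta$ on $\{v\ge u\}$.

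The step I expect to be the main obstacle is the almost-sure convergence $(u_n,v_n)\to(\infty,\infty)$ — equivalently, that the running maximum $W_n$ of the ``two-phase'' walk tends to $-\infty$ — which is precisely where the stability condition enters and which needs a uniform (not merely pointwise) strong law; the related a.s.\ finiteness of $T^2_1$ underlies $\psi(\infty,\infty)=1$ through the same mechanism. Everything else is bookkeeping: measurability of $\psi$ and of $\mathcal{T}f$, the two inductions establishing the closed forms for $u_n,v_n,A_n$, and the interchanges of limit and expectation, all justified by the uniform bound $1$ on the functions involved.
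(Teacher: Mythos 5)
Your proof is correct, and it takes a genuinely different route from the paper's. You iterate the Lindley operator $\mathcal{T}$ $n$ times, identify $\mathcal{T}^n f(u,v)=\E[\1_{A_n}f(u_n,v_n)]$ with the pathwise recursion for the ``reversed'' coordinates, observe that $A_\infty=\{T^1_1\le u,\,T^2_1\le v\}$, and then pass to the limit: dominated convergence for uniqueness, reverse Fatou for parts (b) and (c). The crux is the almost-sure divergence $(u_n,v_n)\to(\infty,\infty)$, which you correctly reduce to a uniform strong law giving $n^{-1}W_n\to\max(\E U,\E V)<0$, i.e.\ to the negative drift supplied by the stability assumption. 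The paper instead argues purely analytically on the compact space $\Rbar^2$: it sets $f:=\limsup(\gamma-\psi)$, uses upper semi-continuity and Weierstrass to extract a lexicographically minimal maximizer $(a,b)$ of $f$, then applies the sub-solution inequality together with Fatou and the hypotheses $\P(U>0)>0$, $\P(V>0)>0$ to force $(a,b)=(\infty,\infty)$ by contradiction, hence $\sup f=0$. Uniqueness in (a) is then deduced in the paper by applying part (b) twice, whereas you obtain it directly as $\phi=\lim_n\mathcal{T}^n\phi=\P(A_\infty)$.

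The trade-offs: your argument makes the probabilistic content transparent (the iterates are exactly the random walks underlying $T^1_1,T^2_1$), and the reductions $\gamma\mapsto\gamma\vee 0$ in (b) and $\bar h:=(\psi-\eta)^+\1_{\{v\ge u\}}$ in (c) cleanly dispose of the restriction to $\supp(\gamma\vee0)$ and to $\{v\ge u\}$, which the paper handles by choosing its approximating sequence $(a_n,b_n)$ inside those sets. On the other hand, your proof makes the drift condition load-bearing at the level of $(u_n,v_n)\to(\infty,\infty)$, whereas the paper's compactness argument needs only $\P(U>0)>0$ and $\P(V>0)>0$ and lets the drift enter solely through the standing hypothesis $\lim\psi=\lim\gamma=1$; since that hypothesis already forces the drift to be negative, the two proofs cover the same intended regime, but the paper's argument transfers mechanically (no new strong laws to prove) to the $M$-queue, alternating-renewal, and packet-network variants of the theorem, which is presumably why the authors chose it.
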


As shown shortly, the problem of finding upper and lower bounds on $\P(W>x)$ reduces to the problem of finding the functions $\gamma$ and $\eta$ in (b) and (c), respectively. As a technical remark, the additional restriction $v\geq u$ from (c) could strengthen the lower bounds by imposing a weaker condition on $\eta$ in \eqref{pre-estimateeta}.

	\begin{proof}
		For part (a) we first prove that the given $\psi$ satisfies \eqref{equationpsi}; uniqueness will follow after proving (b).  We have
		\begin{align*}
			&\psi(u,v)=\P(T^{1}_{1}\leq u, T^{2}_{1}\leq v  )
			\\&=\P\left( T^{1}_2\leq  (u-U_1)\wedge (v-V_1), T^{2}_{2}\leq v-V_1   , U_1\leq u\right),\\&=\P\left( T^{1}_1\leq (u-U)\wedge (v-V) , T^{2}_{1}\leq v-V,  U\leq u \right),\intertext{where $(U,V)$ is independent of $(T^{1}_{1},T^{2}_{1})$. So}
			&=\E\left[\1_{\{u\geq  U\}}\psi\left((u-U)\wedge (v-V), v-V\right)\right].
		\end{align*}

To prove part (b), i.e., $\psi\geq \gamma$, define first the function
\[f(u,v):=\limsup_{(x,y)\to (u,v)}\left(\gamma(x,y)-\psi(x,y)\right)~\forall (u,v)\in \Rbar^2~,\]
which is upper-semi continuous and attains its maximum on any closed subset of $\Rbar^2$ (see Appendix~\ref{app:sc}.\textbf{1},\textbf{2}). Let
\[K:=\sup_{(u,v)\in \Rbar^2}f(u,v)~.\]
If $K\leq 0$ the proof is complete; assume otherwise that $K>0$. Define
\[\Kcal:=\{(u,v)\in \Rbar^2: f(u,v)=K\}~, \]
which is a closed subset of $\Rbar^2$, and
\[a:=\min\{u\in \Rbar: \exists v\in \Rbar: (u,v)\in \Kcal \}\]
\[b:=\min\{v\in \Rbar: (a,v)\in \Kcal \}~.\]
Since $f(a,b)=K>0$, there exists a sequence $(a_n,b_n)\in  {\supp (\gamma\vee 0)}$ such that $(a_n,b_n)\to (a,b)$ as $n\to \infty$ and
\begin{align*}K&=f(a,b)=\lim_{n\to \infty}(\gamma-\psi)(a_n,b_n)\\&\leq \limsup_{n\to \infty}\E\left[\1_{\{a_n\geq  U\}}(\gamma-\psi)\left((a_n-U)\wedge (b_n-V), b_n-V\right)\right]\intertext{Since $\gamma-\psi\leq K_\gamma<\infty$, we can further use Fatou's lemma}&\leq \E\left[\limsup_{n\to \infty}\1_{\{a_n\geq  U\}}(\gamma-\psi)\left((a_n-U)\wedge (b_n-V), b_n-V\right)\right]\\&\leq K\cdot\P(a=U) +\E\left[\1_{\{a> U\}}f\left((a-U)\wedge (b-V), b-V\right)\right]\\&\leq K\cdot\P(a\geq  U)~,\end{align*}
using the definitions of $K$ and $f$. It then follows that $\P(a\geq U)=1$, such that necessarily
\begin{equation} \label{abinf}
f\left((a-U)\wedge (b-V), b-V\right)=K~
\end{equation}
holds almost surely (a.s.) for the inequalities above to hold as equalities.

Next we claim that $(a,b)=(\infty,\infty)$. Assume by contradiction that $a<\infty$. Then
\eqref{abinf} and  $\P(U>  0)>0$ contradict with the choice of $a$, and hence $a=\infty$. Similarly, assume by contradiction that $b<\infty$. Then \eqref{abinf} and $\P(V>0)>0$ contradict with the choice of $b$, and hence $b=\infty$ as well.
Finally, \[K=	f(\infty, \infty)=\limsup_{u,v\to \infty}(\gamma-\psi)(u,v)=0\]
from the limiting conditions on $\gamma$ and $\psi$, and hence $\psi\geq \gamma$.

We can now prove the uniqueness of $\psi$ solving for (\ref{equationpsi}). Let $\psi_1$ and $\psi_2$ be two bounded solutions satisfying $\psi_i(\infty,\infty)=\lim_{u,v \to \infty}\psi_i(u,v)=1$. Applying the second part of the theorem with $\psi=\psi_i$ and $\gamma=\psi_{3-i}$ (note that the proof only needs that $\psi$ satisfies (\ref{equationpsi}), is bounded, and $\psi(\infty,\infty)=\lim_{u,v \to \infty}\psi(u,v)=1$) we obtain that $\psi_i\geq \psi_{3-i}$ for $i=1,2$, and hence $\psi_1=\psi_2$.

The proof for the last part of the theorem, i.e., $\psi(u,v)\leq \eta(u,v)$ on $v\geq u$ is similar; see Appendix~\ref{app:3rdpart}.
	\end{proof}

We are now able to make the connection between the generic functions $\gamma$ and $\eta$ from Parts (b) and (c) of Theorem~\ref{th:psigammaeta}, and bounds on $\P(W>x)$.

\begin{corollary}{\sc{(Generic Upper and Lower Bounds)}}\label{cor:Wgammaeta}
Consider the functions $\psi$, $\gamma$, and $\eta$ as in Theorem~\ref{th:psigammaeta}. Then the waiting time of a job $n\to\infty$ satisfies for all $x\geq 0$
\begin{align}
1-\E\left[\eta(x-(Z-Y)_+, x-(Z-Y) )\right]&\leq \P(W>x)=1-\E\left[\psi(x-(Z-Y)_+, x-(Z-Y) )\right]\notag\\&\quad\leq 1-\E\left[\gamma(x-(Z-Y)_+, x-(Z-Y))\right]~.
\end{align}
The corresponding sojourn time satisfies
\begin{align}
&1-\E\left[\1_{\{Z_1+Y<x\}}\eta(x-(Z_1+Z_2\vee Y),x-(Z_1+Z_2)) \right]\notag\\&
\qquad\leq \P(D>x)=1-\E\left[\1_{\{Z_1+Y<x\}}\psi(x-(Z_1+Z_2\vee Y),x-(Z_1+Z_2)) \right]\notag\\&\qquad\qquad\leq 1-\E\left[\1_{\{Z_1+Y<x\}}\gamma(x-(Z_1+Z_2\vee Y),x-(Z_1+Z_2)) \right]~.
\end{align}
\end{corollary}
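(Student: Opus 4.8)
The plan is to pass to the stationary limit in representation~\eqref{eq:W2nodes}, then condition on the service times so as to recognize the function $\psi$ of Theorem~\ref{th:psigammaeta}, and finally invoke parts~(b) and~(c) of that theorem for the two-sided bound; the sojourn-time estimates will follow by running the same argument through $D=W+Z_1+Y_2$.

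\emph{A stationary representation of $W$ and $D$.} First I would let $n\to\infty$ in~\eqref{eq:W2nodes}, in which $T^1$ and $T^2$ are maxima over index ranges within $\{3,\dots,n+2\}$ of random walks formed from $(U_k,V_k)_{k\ge3}$; these are non-decreasing in $n$ and, since the walks have negative drift by the stability assumption, converge a.s.\ to finite suprema $T^1_3$ and $T^2_3$ (finiteness being implicit in the requirement $\psi(\infty,\infty)=1$ of Theorem~\ref{th:psigammaeta}). Moreover $(T^1_3,T^2_3)\simeq(T^1_1,T^2_1)$ by the i.i.d.\ and shift-invariant structure, and $(T^1_3,T^2_3)$ is independent of the ``boundary'' service times $(Y_2,Z_2)\simeq(Y,Z)$. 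Consequently the right-hand side of~\eqref{eq:W2nodes} converges a.s., hence in distribution, while by Loynes~\cite{Loynes64} its left-hand side converges in distribution to the stationary $W$; identifying the limits,
\[ W\;\simeq\;\max\bigl\{0,\ T^1_1+(Z-Y)_+,\ T^2_1+(Z-Y)\bigr\}, \]
with $(T^1_1,T^2_1)$ independent of $(Y,Z)$ and $Y,Z$ independent. Since $W$ does not involve $Z_1$ and involves $Y_2$ only through $Z_2-Y_2$, adding job $n$'s own service times $Z_1+Y_2$ and using $Y_2+(Z_2-Y_2)_+=Z_2\vee Y_2$ gives, with independent copies $Z_1,Z_2\simeq Z$,
\[ D\;\simeq\;\max\bigl\{Z_1+Y,\ T^1_1+Z_1+(Z_2\vee Y),\ T^2_1+Z_1+Z_2\bigr\}. \]

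\emph{The exact identities and the two-sided bound.} Next, for $x\ge0$ the constant $0$ in the first maximum is inactive, so $\P(W>x)=\P\bigl(T^1_1+(Z-Y)_+>x\ \text{or}\ T^2_1+(Z-Y)>x\bigr)=1-\P\bigl(T^1_1\le x-(Z-Y)_+,\ T^2_1\le x-(Z-Y)\bigr)$; conditioning on $(Y,Z)$ and using independence, the last probability equals $\E\bigl[\psi(x-(Z-Y)_+,\ x-(Z-Y))\bigr]$ by the formula $\psi(u,v)=\P(T^1_1\le u,\,T^2_1\le v)$ of Theorem~\ref{th:psigammaeta}(a), which is the middle equality. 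The same computation on the representation of $D$ --- where $\{D\le x\}$ forces each of the three terms to be $\le x$ and the service times play the role of conditioning variables --- gives the identity for $\P(D>x)$. For the upper bounds, Theorem~\ref{th:psigammaeta}(b) gives $\psi\ge\gamma$ on $\Rbar^2$, so $\E[\psi(\cdot)]\ge\E[\gamma(\cdot)]$ and $\P(W>x)\le1-\E[\gamma(\cdot)]$, and likewise for $D$. For the lower bounds, $\psi$ is only ever evaluated at arguments $(u,v)$ with $v\ge u$ --- because $(Z-Y)_+\ge Z-Y$ and $Z_2\vee Y\ge Z_2$ --- so Theorem~\ref{th:psigammaeta}(c) yields $\psi\le\eta$ there, hence $\E[\psi(\cdot)]\le\E[\eta(\cdot)]$ and $\P(W>x)\ge1-\E[\eta(\cdot)]$, and likewise for $D$.

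\emph{Expected main obstacle.} The only step that will need genuine care is the stationary representation: justifying the a.s.\ monotone convergence of the finite maxima and, more importantly, keeping track of the dependence structure --- that $Z_1$ drops out of $W$ while $Y_2$ enters it only through $Z_2-Y_2$, so that the limiting pair $(T^1_1,T^2_1)$ is independent of all the service times needed to reconstruct $D$. Everything afterwards is bookkeeping; the one cosmetic point is that when the service-time laws have atoms the indicator $\1_{\{Z_1+Y<x\}}$ in the identity for $\P(D>x)$ is to be read as $\1_{\{Z_1+Y\le x\}}$, which is immaterial for the continuous service times of the explicit $GI/M/1\rightarrow\cdot/M/1$ computation.
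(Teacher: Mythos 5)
Your proposal is correct and follows the paper's own argument: derive the stationary representation of $W$ and $D$ from \eqref{eq:W2nodes}, condition on the remaining service times (which are independent of $(T^1_1,T^2_1)$) to expose $\psi$, check that the first argument never exceeds the second so that part~(c) of Theorem~\ref{th:psigammaeta} applies, and conclude with parts~(b) and~(c). The paper's proof is terser---it neither spells out the monotone limit $n\to\infty$ nor records the explicit $\max$-representation of $D$---but it is the same route; your observation that $\1_{\{Z_1+Y<x\}}$ should strictly be $\1_{\{Z_1+Y\le x\}}$ unless the service laws are atomless is also warranted.
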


Note that we omitted indexes in the $Y$'s and $Z$'s, where possible, to avoid clutter. This corollary hints that explicit bounds on $\P(W>x)$ and $\P(D>x)$ can be obtained once constructing explicit functions $\gamma$ and $\eta$ satisfying the conditions from parts (b) and (c) of Theorem~\ref{th:psigammaeta}, respectively. An example will be provided in~\ref{sec:example}.

\begin{proof}
From $W$'s representation from (\ref{eq:W2nodes}) it follows for all $x\geq0$
\[\begin{aligned}\P(W>x)&=\P(\max\left\{0,T^1+(Z-Y)_+, T^2+Z-Y\right\}>x)\\&=1-\P(\max\left\{0,T^1+(Z-Y)_+, T^2+Z-Y\right\}\leq x)\\&=1-\P\left(T^1\leq x- (Z-Y)_+, T^2\leq x- (Z-Y)\right)\\&=1-\E\left[\psi(x-(Z-Y)_+, x-(Z-Y))\right]~.\end{aligned}\]
Since $x-(Z-Y)_+\leq  x-(Z-Y)$ and $\gamma(u,v)\leq\psi(u,v)\leq \eta(u,v)~\forall v\geq u$, the upper and lower bounds on $P(W>x)$ follow immediately. The bounds on $\P(D>x)$ follow from $D=W+Z_1+Y$.
\end{proof}

\subsection{The Integral Equation (\ref{equationpsi}) vs Related Work}\label{sec:ierw}
At the core of Theorem~\ref{th:psigammaeta}, which enables the generic construction of upper and lower bounds on $P(W>x)$, stands the integral equation (\ref{equationpsi}). This can be rewritten as a two-dimensional renewal equation
\begin{equation}\psi(u,v)=\int_{-\infty}^u\int_{-\infty}^{\infty}\psi\left((u-x)\wedge (v-y),v-y\right)dF_{U,V}(x,y)~,\label{eq:renewalpsi}\end{equation}
where $F_{U,V}$ is the joint distribution of $(U,V)$; the `renewalness' stems from expressing the underlying random walks in $T_1^1$ and $T_1^2$ in terms of $T_2^1$ and $T_2^2$, by extracting the first increments $U_1$ and $V_1$ and further using stationarity.

From a conceptual point of view, \eqref{eq:renewalpsi} relates to the standard analysis for $GI/G/1$. Indeed, solving for the waiting time $\P(W^1\leq x)$, for the first queue from Fig.~\ref{fig:2queues}, reduces to solving for
\begin{equation}\P(W^1\leq x)=\int_{-\infty}^x\P(W^1\leq x-y)d F_U(y)~,\label{eq:gg1renewal}\end{equation}
by applying the renewal argument and  Lebesgue Convergence Theorem.

The crucial difference between \eqref{eq:gg1renewal} and \eqref{eq:renewalpsi} stands in the integrand itself. While \eqref{eq:gg1renewal} uses the very distribution $\P(W^1\leq x)$ which is being sought after, the integrand in \eqref{eq:renewalpsi} is based on the joint distribution $\P(T^{1}_{1}\leq u, T^{2}_{1}\leq v )$ stemming from the underlying random walks in $W$; as shown in Theorem~\ref{th:psigammaeta} (Part (a)), this joint distribution is also the \textit{unique} solution of \eqref{eq:renewalpsi}.

Despite a vast amount of related literature (see, e.g., Cohen~\cite{cohen1982}) there is no exact and closed-form solution to \eqref{eq:gg1renewal}, partly due to outstanding numerical challenges associated to Wiener-Hopf type of integral equations. The equation does however lend itself to a generic and especially simple construction of upper and lower bounds. Indeed, by assuming the existence of a function $\gamma(x)$ satisfying for all $x\geq0$
\begin{equation}\int_{-\infty}^x \gamma(y)dF(y)\geq\gamma(x)~,\label{eq:gammmagg1k}\end{equation} then
$$\P(W^1>x)\leq 1-\gamma(x)~.$$ The proof is immediate using the same renewal argument and induction (see Kingman~\cite{Kingman70}). One such function is $\gamma(x)=1-e^{-\theta x}$, where $\theta>0$ satisfies $\E[e^{\theta U}]=1$ (or, more generally, $\theta=\sup\{r>0:\E[e^{r U}]\leq 1\}$; recall the discussion about light-tailed from the start of \ref{sec:m2}); the corresponding bound $\P(W^1>x)\leq e^{-\theta x}$ is known as the Kingman's bound for $GI/G/1$ queues which can alternatively be obtained using a martingale-based argument~(Kingman~\cite{Kingman64}).

It is instructive to apply our integral equation for a single $GI/G/1$ queue, by letting $Z=0$ (i.e., instantaneous service times at the second queue in the tandem from Fig.~\ref{fig:2queues}). Then, according to part (b) from Theorem~\ref{th:psigammaeta} and Corollary~\ref{cor:Wgammaeta}, the derivation of an upper bound reduces to finding a function $\gamma(u,v)$ satisfying
\begin{equation}\E\left[\1_{\{u\geq Y-X\}}\gamma((u-(Y-X))\wedge(v+X),v+X)\right]\geq\gamma(u,v)~,\label{eq:gammmagg1s}\end{equation}
for all $(u,v)\in \supp(\gamma\vee 0)\subseteq \Rbar^2$ and $\gamma(\infty,\infty)=1$. As expected, Kingman's bound is recovered by letting
$$\gamma(u,v)=\left\{\begin{array}{ll}1-e^{-\theta u}&\textrm{if}~u\leq v\\1&\textrm{otherwise}\end{array}\right.~.$$
Similarly, both \eqref{eq:gammmagg1k} and \eqref{eq:gammmagg1s} recover the tighter Ross' bound $\P(W^1>x)\leq\frac{1}{\inf_{u\geq0}\E\left[e^{\theta (U-u)}\mid U> u\right]}e^{-\theta x}$, which is exact in the $GI/M/1$ case (Ross~\cite{Ross74}); this can be done by multiplying the exponential in $\gamma(x)$ and $\gamma(u,v)$ by the prefactor from the bound.

For the analysis of a single queue, \eqref{eq:gammmagg1s} is seemingly unnecessarily complex, partly because it stems from the integral equation (\ref{equationpsi}) designed for a tandem of two queues, as opposed to \eqref{eq:gammmagg1k} which is designed for a single queue. Its crucial benefit, and especially of its parent integral equation (\ref{equationpsi}) as well, is that they allow for closed-form solutions -- to be shown next for two queues and later for general tandems.

\subsection{Existence of Poly-Exp Bounds}

Before explicitly constructing $\gamma$ which can lend themselves to sharp (upper) bounds on $\P(W>x)$, we first prove its poly-exp existence along with an explicit construction. For this very purpose, in the proof of the next existence theorem, we are not concerned \textit{yet} with the sharpness of the polynomial's coefficients.

\begin{theorem}{\sc{(Existence of Poly-Exp Upper Bounds)}}\label{th:existence2}
	Define
	\[\theta_1:=\sup\{r>0:\E[e^{r U}]\leq 1\},\qquad \theta_2:=\sup\{r>0:\max\{\E[e^{r V}], \E[e^{r U}]\}\leq 1\} \]\[I_U(r):=\left\lbrace\begin{split} 1&\quad\textrm{if~}~~\E[e^{r U}]= 1\\0&\quad \textrm{otherwise}\end{split}\right.,\qquad I_V(r):=\left\lbrace\begin{split} 1&\quad\textrm{if~}~~\E[e^{r V}]= 1\\0&\quad \text{otherwise}\end{split}\right.\]
	\[\]
	for random variables $U,V$. For some $a\in \R$ define
	\[\mathcal{D}_a:=\{(u,v):u\geq  -a_+,~v\geq (-a)\vee u\}\] and assume that there exists  a constant $K$ such that for all $v\geq 0$
	\[\E\left[(V-v)e^{\theta_2 (V-v)}\mid V>v\right]\leq K<\infty~.\]

	Then there exist a positive constant   $P_1\geq 0$   and a polynomial $P_2:\R^{2}\to \R$ of degree  $I_V(\theta_2)$ and satisfying $P_2(u,v)\geq0~\forall (u,v)\in\Dcal_a$, such that
	\[ \gamma(u,v):=\1_{\{ (u,v)\in \Dcal_a\}}\left[ 1-P_1 e^{-\theta_1 u}-P_2(u,v)e^{-\theta_2 v}\right]\]
	satisfies the requirements from part (b) of Theorem~\ref{th:psigammaeta}.
\end{theorem}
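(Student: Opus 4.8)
The plan is to verify directly that the candidate $\gamma(u,v)=\1_{\{(u,v)\in\Dcal_a\}}\big[1-P_1e^{-\theta_1 u}-P_2(u,v)e^{-\theta_2 v}\big]$ satisfies inequality \eqref{pre-estimatepsi}, for suitable choices of the scalar $P_1$ and the polynomial $P_2$ (of degree $I_V(\theta_2)$ in the second variable, constant otherwise), together with the boundary/limit conditions. First I would check the easy structural conditions: $\gamma$ is bounded above by the finite constant $K_\gamma=1$, and $\gamma(\infty,\infty)=1$ since $\theta_1,\theta_2>0$ force the exponential correction terms to vanish. The substantive work is to plug $\gamma$ into the left-hand side of \eqref{pre-estimatepsi}, namely $\E\big[\1_{\{u\geq U\}}\gamma\big((u-U)\wedge(v-V),\,v-V\big)\big]$, and bound it below by $\gamma(u,v)$ for every $(u,v)\in\supp(\gamma\vee0)$. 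Note that on $\Dcal_a$ we have $v\geq u$, so on the relevant region the minimum $(u-U)\wedge(v-V)$ interacts cleanly with the geometry of $\Dcal_a$; the domain $\Dcal_a$ is designed precisely so that the event $\{u\geq U\}$ combined with $v\geq u$ keeps the argument inside (or harmlessly outside) $\Dcal_a$.

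The core computation splits the expectation according to whether $(u-U)\wedge(v-V)$ equals $u-U$ or $v-V$, and whether the resulting point lies in $\Dcal_a$. Dropping the indicator $\1_{\{(u,v)\in\Dcal_a\}}$ inside the expectation only decreases the left side (since $\gamma\geq$ its restriction is false in general — rather, one uses that where the point leaves $\Dcal_a$, $\gamma=0\le 1$, and one instead lower-bounds by replacing $\gamma$ with $1$ minus the full exponential terms computed without the indicator, which is $\le\gamma$ on $\Dcal_a$ and still a valid lower bound off it as long as that quantity is $\le0$ there — this is exactly why $\Dcal_a$ is chosen with the $-a$ thresholds). After this reduction, linearity gives three pieces: the constant $1$ contributes $\P(U\le u)$; the $P_1e^{-\theta_1 u}$ term contributes $P_1\E\big[\1_{\{U\le u\}}e^{-\theta_1((u-U)\wedge(v-V))}\big]\le P_1 e^{-\theta_1 u}\E[e^{\theta_1 U}]\le P_1e^{-\theta_1 u}$ by the definition of $\theta_1$ (using $e^{-\theta_1 t\wedge s}\le e^{-\theta_1 t}$... careful: $(u-U)\wedge(v-V)\le u-U$ so $e^{-\theta_1(\cdot)}\ge e^{-\theta_1(u-U)}$, which points the wrong way — here one instead uses that on $\Dcal_a$, $v-V\ge u-U$ is \emph{not} guaranteed, so one genuinely keeps the minimum and bounds $e^{-\theta_1((u-U)\wedge(v-V))}\le e^{-\theta_1(u-U)}\vee e^{-\theta_1(v-V)}$, absorbing the second alternative into the $\theta_2$ term since $\theta_2\le\theta_1$). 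The $P_2(\cdot)e^{-\theta_2(v-V)}$ term is where the polynomial degree enters: $\E\big[P_2\big((u-U)\wedge(v-V),v-V\big)e^{-\theta_2(v-V)}\big]$. When $\E[e^{\theta_2 V}]<1$ strictly (i.e.\ $I_V(\theta_2)=0$), $P_2$ is a constant and a plain geometric-type contraction gives the needed slack; when $\E[e^{\theta_2 V}]=1$ (so $I_V(\theta_2)=1$), the exponential no longer contracts and one needs the linear polynomial to generate the deficit, using precisely the hypothesis $\E[(V-v)e^{\theta_2(V-v)}\mid V>v]\le K$ to control the drift term $\E[(V-v)e^{-\theta_2(v-V)}\1_{\{V>v\}}]$. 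One then chooses the leading coefficient of $P_2$ large enough (relative to $K$ and to $P_1$) and $P_1$ in turn large enough that, after collecting terms, the sum of the three pieces exceeds $1-P_1e^{-\theta_1 u}-P_2(u,v)e^{-\theta_2 v}=\gamma(u,v)$; the remaining freedom (an additive constant in $P_2$, plus $P_1$) is spent on making $P_2\ge0$ throughout $\Dcal_a$ and on the boundary strips where $u$ or $v$ is near the thresholds $-a_+$, $(-a)\vee u$.

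The main obstacle I anticipate is the bookkeeping of the minimum $(u-U)\wedge(v-V)$ against the two different exponential rates $\theta_1\ge\theta_2$ and against the boundary of $\Dcal_a$: one must show that whichever branch of the minimum is active, the corresponding term can be matched against the right-hand side, and that the cross-terms (e.g.\ the $\theta_1$-term evaluated on the branch where $v-V$ is the minimum) are dominated — this is the step that dictates the shape of $\Dcal_a$ and the requirement $\theta_2\le\theta_1$. A secondary technical point is the $I_V(\theta_2)=1$ case, where producing a strict deficit from a \emph{linear} (not higher-degree) polynomial requires the uniform bound on the conditional mean overshoot $\E[(V-v)e^{\theta_2(V-v)}\mid V>v]\le K$; without it the polynomial degree would have to grow. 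Once the inequality \eqref{pre-estimatepsi} is established pointwise on $\supp(\gamma\vee0)$ and the limit condition is checked, part (b) of Theorem~\ref{th:psigammaeta} applies verbatim and yields $\psi\ge\gamma$, completing the proof.
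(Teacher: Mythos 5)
Your sketch correctly identifies the obstacles — the interaction of the two rates $\theta_1\geq\theta_2$ with the minimum $(u-U)\wedge(v-V)$, the role of $\Dcal_a$'s geometry, and the need for a linear $P_2$ plus the overshoot bound $K$ when $\E[e^{\theta_2 V}]=1$ — but it does not actually resolve the central one, and at the crucial moment it retreats into hand-waving. You notice that the naive bound $e^{-\theta_1((u-U)\wedge(v-V))}\geq e^{-\theta_1(u-U)}$ ``points the wrong way,'' and propose to ``keep the minimum and bound by the max of exponentials, absorbing the second alternative into the $\theta_2$ term since $\theta_2\leq\theta_1$.'' This is not a proof step: replacing $e^{-\theta_1((u-U)\wedge(v-V))}$ by $\max\{e^{\theta_1(U-u)},e^{\theta_1(V-v)}\}$ is just an identity, and it is not explained how the piece $e^{\theta_1(V-v)}$ can be ``absorbed'' into anything, nor why $\theta_2\leq\theta_1$ should make that work. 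Without an explicit mechanism this branch of the argument stalls.

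The paper resolves this with two concrete devices you do not supply. First, it bounds $\max\{x,y\}\leq x+y$ for $x,y\geq 0$, so $e^{-\theta_1((u-U)\wedge(v-V))}\leq e^{\theta_1(U-u)}+e^{\theta_1(V-v)}$; this turns the awkward minimum into an \emph{additive} error and lets the expectation be split cleanly into three manageable pieces. Second, the proof is organized as a two-step construction: in Step 1 it produces a scalar $Q_1$ and a one-variable polynomial $Q_2$ of degree $I_V(\theta_2)$ satisfying two \emph{decoupled} one-dimensional integral inequalities, and, crucially, the inequality for $Q_2$ is designed to carry the cross-term $Q_1 e^{\theta_1(V+a-v)}$ on its right-hand side (which is exactly where the extra $e^{\theta_1(V-v)}$ piece from the additive bound lands). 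Step 2 then sets $P_1:=Q_1$, $P_2(u,v):=Q_2(v+a)$ and assembles these one-variable facts into the two-variable inequality \eqref{pre-estimatepsi}. Your proposal contains neither the additive bound nor the nested form of the $Q_2$ inequality, and without them the ``collect terms and choose $P_1,P_2$ large enough'' step has no chance of closing: the cross-term between the $\theta_1$-rate and the $v$-variable is precisely what prevents a naive term-by-term matching. So this is a genuine gap, not just a compression of the paper's argument.
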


The role of the parameter $a$ is to optimize $\gamma$ in the sense of tightening the bounds for $\P(W>x)$; for more details see the example from~\ref{sec:example}. Note that the poly-exp structure of $\gamma$ involves a mix of two exponentials and corresponding polynomials. If the queues are homogeneous (same law for $U$ and $V$), then $\theta_1=\theta_2$ and hence a single exponential. Otherwise, the poly-exp structure is more nuanced and depends on the location of the `bottleneck'. For instance, if the distributions of $U$ and $V$ are in the same class, but $E[U]>E[V]$, then the first queue is the bottleneck, $\theta_1=\theta_2$, and $I_V(\theta_2)=0$, i.e., the degree of $P_2$ is $0$. Otherwise, if $E[U]<E[V]$, then the second node is the bottleneck; under the additional condition $\E[e^{\theta^+ U}]\geq 1$ (recall the description on `light-tailedness' from the beginning of~\ref{sec:m2}), then $\theta_1>\theta_2$ and $I_V(\theta_2)=1$; thus, the poly-exp structure involves two exponentials whereas the degree of $P_2$ is $1$.

We also mention that the existence of a matching poly-exp structure for $\eta$, needed for lower bounds on $\P(W>x)$, is still open.

\begin{proof} Fix $a\in\R$. We proceed in two steps.

	\textit{Step 1: } First we prove that there exist a constant $Q_1\geq 0$ and a polynomial $Q_2:\R\to \R$ of degree $I_V(\theta_2)$ with non-negative coefficients such that for all $u\geq -a_+, v\geq 0$
	\begin{equation} \label{Q1Step1}
		Q_1e^{-\theta_1 u}\geq \E\left[\1_{\{u\geq  U\}}Q_1e^{\theta_1 (U-u)}\right]+\P(U>u)
	\end{equation}
\begin{equation} \label{Q2Step1}
		Q_2(v)e^{\theta_2 (a-v)}\geq \E\left[\1_{\{v\geq  V\}}\left(Q_1e^{\theta_1 (V+a-v)}+Q_2(v-V)e^{\theta_2 (V+a-v)}\right)\right]+\P(V>v)~.
			\end{equation}

	\textit{Proof: }
	Inequality \eqref{Q1Step1} holds immediately for
	\[Q_1:=\left(\inf_{u\geq -a_+}\E\left[e^{\theta_1 (U-u)}\mid U> u\right]\right)^{-1}\]
by splitting $\1_{\{u\geq  U\}}=1-\1_{\{U>u\}}$ and rewriting $\E[\1_{\{U>u\}}e^{\theta_1(U-u)}]=E[e^{\theta_1(U-u)}\mid U>u]\P(U>u)$.
	
	Let $Q_2(v):=A_0+A_1v$. To also prove \eqref{Q2Step1} it is sufficient to show  that there exist the non-negative constants $A_0, A_1$ such that
	\begin{equation}\label{Al-psi-A0}
		\begin{split}
			&	A_0e^{\theta_2 a}\left\lbrace e^{-\theta_2 v}-\E\left[\1_{\{v\geq  V\}}e^{\theta_2 (V-v)}\right]\right\rbrace+A_1e^{\theta_2 a}\left\lbrace v e^{-\theta_2 v}-\E\left[\1_{\{v\geq  V\}}(v-V)e^{\theta_2 (V-v)}\right]\right\rbrace
			\\&\geq \E\left[\1_{\{v\geq  V\}}Q_1e^{\theta_1(V+a-v)}\right]+\P(V>v)~.
		\end{split}
	\end{equation}
	For $v\geq 0$, and using that $\theta_1\geq \theta_2$, the right side can be bounded as
	\begin{align*}
		&\E\left[\1_{\{v\geq V\}}Q_1e^{\theta_1(V+a-v)}\right]+\P(V>v)\\&\leq e^{\theta_1 a}\E\left[\1_{\{v\geq V\}}Q_1e^{\theta_2(V-v)}\right]+\P(V>v)\\	&=  e^{\theta_1 a}\E\left[Q_1e^{\theta_2(V-v)}\right]+\P(V> v) - e^{\theta_1 a}\E\left[Q_1e^{\theta_2(V-v)}\mid V> v\right]\P(V> v)\\&\leq Q_1\E[e^{\theta_2 V}]e^{\theta_1 a-\theta_2 v}+\P(V> v)~.
	\end{align*}
	On the left side, we bound the coefficient of $A_0e^{\theta_2 a}$ in the opposite direction
	\begin{align*}
		e^{-\theta_2 v}-\E\left[\1_{\{v\geq  V\}}e^{\theta_2 (V- v)} \right] \geq \left(1-\E\left[e^{\theta_2 V}\right]\right)e^{-\theta_2 v}+ \P(V> v)
	\end{align*}
	and similarly the coefficient of $A_1$
	\begin{align*}
	&ve^{-\theta_2 v}-\E\left[\1_{\{v\geq  V\}}\left(v-V\right)e^{\theta_2 (V- v)} \right]\\&\qquad=ve^{-\theta_2 v}-\E\left[\left(v-V\right)e^{\theta_2 (V- v)}\right] +\E\left[\1_{\{V> v\}}\left(v-V\right)e^{\theta_2 (V- v)}\right]\\&\qquad\geq \left(1-\E\left[e^{\theta_2 V}\right]\right)ve^{-\theta v}+\E\left[Ve^{\theta_2 V}\right]e^{-\theta_2 v}-K\P(V> v)~.
	\end{align*}

Therefore, it is sufficient to determine the coefficients $A_0$ and $A_1$ satisfying the tighter version of~\eqref{Q2Step1}

	\begin{equation}\label{Al-psi-A0tight}
		\begin{split}
			&	A_0e^{\theta_2 a}\left\lbrace \left(1-\E\left[e^{\theta_2 V}\right]\right)e^{-\theta_2 v}+ \P(V> v)\right\rbrace\\&\qquad\qquad\qquad+A_1e^{\theta_2 a}\left\lbrace \left(1-\E\left[e^{\theta_2 V}\right]\right)ve^{-\theta v}+\E\left[Ve^{\theta_2 V}\right]e^{-\theta_2 v}-K\P(V> v)\right\rbrace
			\\&\qquad\qquad\geq Q_1\E[e^{\theta_2 V}]e^{\theta_1 a-\theta_2 v}+\P(V> v)~.
		\end{split}
	\end{equation}

There are two cases. If $\E\left[e^{\theta_2 V}\right]<1$ then
	\[A_1:=0\quad\textrm{and}\quad A_0:=\max\left\{\frac{Q_1\E[e^{\theta_2 V}]e^{(\theta_1-\theta_2)a}}{1-\E[e^{\theta_2 V}]}, e^{-\theta_2 a}\right\}\]
	satisfy \eqref{Al-psi-A0} and $Q_2(v)=A_0$ has degree $I_V(\theta_2)=0$.

In the other case, if $\E\left[e^{\theta_2 V}\right]=1$, then
	\[A_1:=\frac{Q_1e^{(\theta_1-\theta_2)a}}{\E[Ve^{\theta V}]}\quad\textrm{and}\quad A_0:=\max\left\{e^{-\theta_2 a}+A_1K,0\right\}\]
	also satisfy \eqref{Al-psi-A0}; moreover $Q_2(v)=A_0+A_1v$ has degree $I_V(\theta)=1$. We note that, in the expression of $A_0$, only the first term in the `$\max$' is needed for \eqref{Al-psi-A0}; the second is simply needed for $A_0\geq0$.

We make the important remark that, when $\E\left[e^{\theta_2 V}\right]=1$, $Q_2(v)$ has necessarily degree $1$; concretely, $A_1>0$ is needed to neutralize the term $e^{-\theta_2 v}$ from the right side of \eqref{Al-psi-A0}.

	\textit{Step 2: }Let $P_1:=Q_1$ and $P_2(u,v):=Q_2(v+a)$ from Step 1. Then $\gamma$ satisfies \eqref{pre-estimatepsi}.
	
	\textit{Proof: }Note first that the `marginal' function of $\gamma$ for the first queue
	\[\gamma_{1}(u):=(1-P_1e^{-\theta_1 u})\1_{\{u\geq -a_+\}}\]
	satisfies for all $u\geq -a_+$
	\begin{equation}\label{gamma2}
	\begin{split}	\E\left[\1_{\{u\geq U\}}\gamma_1(u-U)\right]&=\E\left[\1_{\{u\geq  U\}}\left(1-P_1e^{\theta_1 (U-u)}\right)\right]\\&\geq 1- P_1e^{-\theta_1 u}=\gamma_1(u)
		\end{split}
	\end{equation}
using the definition of $Q_1$.
	Note also that
	\[\supp(\gamma\vee 0)=\{(u,v)\in \Dcal_a:  1> Q_1e^{-\theta_1 u}+ Q_2(v+a)e^{-\theta_2 v}\} \subseteq \Dcal_a~.\]
	We can now prove condition \eqref{pre-estimatepsi} for $\gamma$. For all  $(u,v)\in \Dcal_a$
	\begin{align*}
		&\E\left[\1_{\{u\geq   U\}}\gamma\left((u-U)\wedge (v-V),  v-V \right)\right]\\&=\E\bigg[\1_{\{u\geq  U,v+a\geq V\}} \big[1-Q_1e^{-\theta_1\left( (v-V)\wedge(u-U)\right)} -Q_2(a+v-V)e^{-\theta_2 (v-V)}\big]\bigg]\intertext{Since $ Q_1\geq 0$ and using $\max\{x,y\}\leq x+y$ for $x,y\geq0$ we can continue with}&\geq \E\Bigg[\1_{\{u\geq  U,v+a\geq  V\}} \bigg(1-Q_1 e^{\theta_1 (U-u)} -\big(Q_1e^{\theta_1 (V-v)}+Q_2(a+v-V)e^{\theta_2 (V-v)}\big)\bigg)\Bigg]\\&=	\E\bigg[\1_{\{u\geq  U, v+a< V\}} \big(-1+Q_1  e^{\theta_1 (U-u)}\big)\bigg] +	\E\bigg[\1_{\{u\geq  U\}}\big(1-Q_1  e^{\theta_1 (U-u)}\big)\bigg]\\&\quad -\E\left[\1_{\{u\geq  U, v+a\geq  V\}}\big(Q_1e^{\theta_1 (V-v)}+Q_2(a+v-V)e^{\theta_2 (V-v)}\big) \right]\intertext{Using $Q_1\geq 0$ for the first expectation and \eqref{gamma2} for the second, since $(u,v)\in \Dcal_a$ implies   $u\geq -a_+$, we can further continue}&\geq  -\P\left(u\geq  U, v+a< V \right)+\left(1-Q_1e^{-\theta_1 u}\right)\\&\quad -\E\left[\1_{\{u\geq  U, v+a\geq  V\}}\big(Q_1 e^{\theta_1 (V-v)}+Q_2(a+v-V) e^{\theta_2 (V-v)}\big)\right]\\&\geq  -\P\left(v+a<V \right) -\E\left[\1_{\{ v+a\geq  V\}}\big(Q_1 e^{\theta_1 (V-v)}+Q_2(a+v-V) e^{\theta_2 (V-v)}\big)\right]\\&\quad+\left(1-Q_1e^{-\theta_1 u}\right)
		\\&\geq 1-Q_1e^{-\theta_1 u}-Q_2(v+a)e^{-\theta_2 v}=\gamma(u,v)~.
	\end{align*}
	In the second inequality we discarded the event $\{u\geq U\}$ and in the last we used \eqref{Q2Step1} from Step 1; for the last equality recall that the entire chain of inequalities holds for $(u,v)\in \Dcal_a$.

Finally, the remaining conditions from part (b) of Theorem~\ref{th:psigammaeta}, i.e., $\gamma$ is bounded and $\gamma(\infty,\infty)=1$ hold trivially.
\end{proof}

\section{General Case: $M$ Queues}\label{sec:gc}
We now address the general case of $M$ queues in tandem $GI/G/1\rightarrow \cdot/G/1\rightarrow\dots\rightarrow\cdot/G/1$. As in the $M=2$ case, job $0$ arrives at time zero and job $n$ arrives at time $X_0+\cdots+X_{n-1}$. The service times of job $n$ at the nodes are light-tailed and denoted by $Y^{(j)}_{n+j-1}$ for $j=\{1,2,\ldots,M\}$\footnote{Superscript indexes mainly stand for the queues' positions in the tandem.}; see Fig.~\ref{fig:shapem}. All sequences are i.i.d. and mutually independent, and we assume the stability condition $\E[X_0]>\max_j\E[Y^{(j)}_{j-1}]$.

	\begin{figure}[h]
		\centering
		\includegraphics[width=0.7\linewidth]{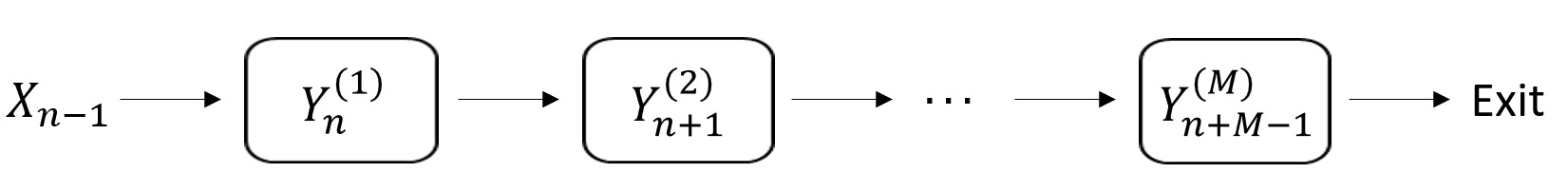}
		\caption{Inter-arrival and service times for job $n$ in a tandem of $M$ queues}
		\label{fig:shapem}
	\end{figure}

The exit time of job $n$ from the tandem is
		\[\begin{split}\max_{0\leq i_1< \ldots< i_{M}\leq M+n-1}&X_0+\cdots+X_{i_1-1}+Y^{(1)}_{i_1}+\cdots+Y^{(1)}_{i_2-1}+Y^{(2)}_{i_2}+\cdots+Y^{(2)}_{i_3-1}\\&+\cdots+Y^{(M)}_{i_{M}}+\cdots+Y^{(M)}_{M+n-1}\end{split}~.\]
		Changing all sub-indexes from $k$ to $M+n-k$ the exit time can be rewritten as
		\[\begin{split}\tau=\max_{1\leq i_M< i_{M-1}< \ldots< i_{1}\leq M+n}& Y^{(M)}_{1}+\cdots+Y^{(M)}_{i_{M}}+Y^{(M-1)}_{i_{M}+1}+\cdots+Y^{(M-1)}_{i_{M-1}} \\&+\dots+Y^{(1)}_{i_2+1}+\dots+Y^{(1)}_{i_1}+X_{i_1+1}+\dots+X_{M+n}\end{split}~.\]

Therefore, the end-to-end waiting time of job n has the same distribution as
	\[\begin{split}
		W&=\tau-[Y^{(M)}_1+Y^{(M-1)}_2+\cdots+Y^{(1)}_{M}+X_{M+1}+X_{M+2}+\cdots X_{M+n}]\\&=\max_{1\leq j\leq m}\left\{T^{j}_{M+1,M+n}+S^{j}-\sum_{k=1}^M Y^{(M+1-k)}_k\right\}\vee 0
	\end{split}\]
		where

\[S^{M}=Y^{(M)}_{1}+\cdots+Y^{(M)}_{M}~,\]
		\[S^{j}=\max_{1\leq  i_{M}< \ldots< i_{j+1}\leq M}Y^{(M)}_{1}+\cdots+Y^{(M)}_{i_{M}}+Y^{(M-1)}_{i_{M}+1}+\cdots+Y^{(M-1)}_{i_{M-1}}+\cdots+Y^{(j)}_{i_{j+1}+1}+\cdots+Y^{(j)}_{M}~,\] for $1\leq j\leq M-1$, and
		\[ T^{j}_{k,l}:=\sup_{k\leq  i_j<i_{j-1}<\cdots < i_1\leq l}V^{(j)}_k+\cdots+V^{(j)}_{i_j}+V^{(j-1)}_{i_{j}+1}+\cdots+V^{(j-1)}_{i_{j-1}}+\cdots+V^{(1)}_{i_{2}+1}+\cdots +V^{(1)}_{i_1}\]
		with $V^{(i)}\simeq  Y^{(i)}-X$.

Relative to the case $M=2$ from~\ref{sec:m2}, the expression of $W$ is structurally identical but written in a general form in terms of the $S^j$'s. Instantiated for $M=2$, $S^{j}-\sum_{k=1}^M Y^{(M+1-k)}_k$ recovers $(Y_2^{(2)}-Y_2^{(1)})_+$ when $j=1$ and $Y_2^{(2)}-Y_2^{(1)}$ when $j=2$; note also from Figs.~(\ref{fig:shapem}) and (\ref{fig:2queues}) that $(Y^{(1)}_n)$ and $(Y^{(2)}_n)$ herein correspond to $(Y_n)$ and $(Z_n)$, respectively, from~\ref{sec:m2}.

The next three results are generalizations of the corresponding results from the $M=2$ case; as their proofs are similar to those from~\ref{sec:m2}, we defer them to Appendix~\ref{app:proofsgc}.

	\begin{theorem}{\sc{(Generic Construction of Upper and Lower Bounds)}}\label{th:psigammaetaM}
		Let $V^{(i)}$  be random variables satisfying $\P(V^{(i)}>0)>0$ for $i=1,\dots,M$, and let $(V^{(1)}_1,\ldots, V^{(M)}_1), (V^{(1)}_2,\ldots, V^{(M)}_2), \ldots$ be i.i.d. copies of $(V^{(1)},\ldots, V^{(M)})$. Denote
		\[\Vcal_M(v_1,\ldots, v_M):=\left(\bigwedge_{1\leq i\leq 2}\left(v_i-V^{(i)}\right),  \ldots,\bigwedge_{M-1\leq i\leq M}\left(v_i-V^{(i)}\right), v_M-V^{(M)} \right)~.\]
		\begin{enumerate}[label=(\alph*)]
\item
The integral equation
		\begin{equation}\label{psimnode}
		\begin{split}
		\E\left[\1_{\{v_1\geq  V^{(1)}\}}\psi_M\left(\Vcal_M(v_1,\ldots, v_M) \right)\right]= \psi_M(v_1,\ldots,v_M)
	\end{split}
		\end{equation}
admits a unique solution in the class of bounded functions on $\Rbar^M$ having the limit $\psi_M(\infty,\dots,\infty)$ \\$=\lim_{v_1,\dots,v_M\to \infty}\psi_M(v_1,\dots,v_M)=1$. This is given by
$$\psi_M(v_1,\ldots, v_M):=\P\left(T^1_1\leq v_1,\ldots, T^M_1\leq v_M\right)~,$$  where
	 \[ T^{j}_{k}:=\sup_{k\leq  i_j<i_{j-1}<\cdots < i_1<\infty}V^{(j)}_k+\cdots+V^{(j)}_{i_j}+V^{(j-1)}_{i_{j}+1}+\cdots+V^{(j-1)}_{i_{j-1}}+\cdots+V^{(1)}_{i_{2}+1}+\cdots +V^{(1)}_{i_1}~,\]
for $j=1,\dots,M$ and $k\in\N$.

	\item Assume that the function $\gamma_M:\Rbar^M\to (-\infty,K_{\gamma_M}]$, for some finite $K_{\gamma_M}$, satisfies for all $(v_1,\ldots, v_M)\in \supp(\gamma_M\vee 0)\subseteq \Rbar^M$
	\begin{equation}\label{gammamnode}
	\begin{split}
		\E\left[\1_{\{v_1\geq  V^{(1)}\}}\gamma_M\left(\Vcal_M(v_1,\ldots, v_M)\right)\right]\geq  \gamma_M(v_1,\ldots,v_M)~.
	\end{split}
\end{equation}
If $\gamma_M(\infty,\ldots,\infty)=\limsup_{v_1,\dots,v_M\to \infty}\gamma_M(v_1,\dots,v_M)=1$ then $\psi_M\geq \gamma_M$.
\item  Assume that the function $\eta_M:\Rbar^M\to[0,\infty)$ satisfies for all  $(v_1,\ldots, v_M)\in \supp(\psi_M)$
\begin{equation}\label{etaamnode}
	\begin{split}
		\E\left[\1_{\{v_1\geq  V^{(1)}\}}\eta_M\left(\Vcal_M(v_1,\ldots, v_M) \right)\right]\leq   \eta_M(v_1,\ldots,v_M)
	\end{split}
\end{equation}
If $\eta_M(\infty,\ldots,\infty)=\liminf_{v_1,\dots,v_M\to \infty}\eta_M(v_1,\dots,v_M)=1$ then $\psi_M\leq \eta_M$.
		\end{enumerate}
			\end{theorem}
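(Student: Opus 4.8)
The plan is to mirror the $M=2$ proof of Theorem~\ref{th:psigammaeta} essentially verbatim, with the scalar pair $(u,v)$ replaced by the vector $(v_1,\dots,v_M)$ and the map $\Vcal_M$ playing the role of $((u-U)\wedge(v-V),v-V)$. For part (a), I would first verify that the proposed $\psi_M$ solves \eqref{psimnode}: conditioning on the first increment $(V^{(1)}_1,\dots,V^{(M)}_1)$ and using stationarity of the i.i.d.\ copies, one shows that the events $\{T^j_1\leq v_j\}$ for $j=1,\dots,M$ simultaneously translate, after peeling off the first increment, into $\{T^j_2\leq (\text{the }j\text{-th coordinate of }\Vcal_M(v_1,\dots,v_M))\}$ together with the constraint $V^{(1)}_1\leq v_1$. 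The key combinatorial observation is that the supremum defining $T^j_k$ ranges over $k\leq i_j<\cdots<i_1<\infty$, so extracting the index block that starts at $k$ either contributes the increment $V^{(j)}_k$ (reducing the budget $v_j$ by $V^{(j)}_1$) or forces $i_j>k$ so that the chain from level $j$ "feeds into" level $j-1$; tracking these cases is exactly what produces the successive minima $\bigwedge_{j\leq i\leq j+1}(v_i - V^{(i)})$ in $\Vcal_M$. Uniqueness is then postponed and obtained from (b), exactly as in the $M=2$ case, by applying (b) with $\psi_M=\psi_{M,1}$, $\gamma_M=\psi_{M,2}$ and vice versa.

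For part (b), I would reproduce the upper-semicontinuity/extremal-point argument. Define $f(v_1,\dots,v_M):=\limsup_{(w_1,\dots,w_M)\to(v_1,\dots,v_M)}(\gamma_M-\psi_M)(w_1,\dots,w_M)$ on $\Rbar^M$, set $K:=\sup f$, and suppose for contradiction $K>0$. The set $\Kcal:=\{f=K\}$ is closed in the compact space $\Rbar^M$, so one can select a lexicographically minimal point $(a_1,\dots,a_M)\in\Kcal$: let $a_1$ be minimal among first coordinates of points of $\Kcal$, then $a_2$ minimal among second coordinates of points of $\Kcal$ with first coordinate $a_1$, and so on. Taking a sequence $(a^{(n)})\subset\supp(\gamma_M\vee 0)$ converging to $(a_1,\dots,a_M)$ with $(\gamma_M-\psi_M)(a^{(n)})\to K$, I would apply \eqref{gammamnode}, then Fatou's lemma (legitimate since $\gamma_M-\psi_M\leq K_{\gamma_M}<\infty$), to obtain
\[
K\leq \P(a_1=V^{(1)})\,K+\E\!\left[\1_{\{a_1>V^{(1)}\}}\,f\!\left(\Vcal_M(a_1,\dots,a_M)\right)\right]\leq K\,\P(a_1\geq V^{(1)})\,.
\]
Hence $\P(a_1\geq V^{(1)})=1$ and $f(\Vcal_M(a_1,\dots,a_M))=K$ almost surely. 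Since $\P(V^{(1)}>0)>0$, positivity of a first-coordinate increment would push $a_1$ down, contradicting minimality unless $a_1=\infty$; then the first coordinate of $\Vcal_M$ is $\bigwedge_{1\leq i\leq 2}(v_i-V^{(i)})$, which is $+\infty$ only if $a_2=\infty$ too, and since $\P(V^{(2)}>0)>0$ the same argument on the second coordinate forces $a_2=\infty$; iterating, $a_j=\infty$ for all $j$. But then $K=f(\infty,\dots,\infty)=\limsup(\gamma_M-\psi_M)=1-1=0$, a contradiction. Part (c) is symmetric, replacing $\limsup$ by $\liminf$ and reversing inequalities, with the restriction to $\supp(\psi_M)$; I would defer it to the appendix as the authors indicate.

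The main obstacle I anticipate is not the functional-analytic skeleton—which transfers mechanically—but the bookkeeping in the induction that peels off the first increment in $T^j_k$, i.e., verifying that the $j$-th coordinate of $\Vcal_M$ is precisely the right "remaining budget" for $T^j_2$ simultaneously for all $j$. One must argue that in the supremum defining $T^j_1$, the outermost index $i_j$ is either $=1$ (contributing $V^{(j)}_1$ and leaving $v_j-V^{(j)}_1$ for a $T^j_2$-type quantity) or $\geq 2$ in which case the level-$j$ block is empty past index $1$ and the relevant constraint becomes the level-$(j-1)$ budget $v_{j-1}-V^{(j-1)}_1$ shifted—so the binding constraint is $\min\{v_j-V^{(j)}_1,\,v_{j-1}-V^{(j-1)}_1,\dots\}$ truncated appropriately, which is exactly what the nested wedge $\bigwedge_{j\leq i\leq j+1}$ encodes. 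Making this precise requires care with the index ranges and with the convention that empty sums are zero, and with the edge cases $j=1$ (no feeding-in, single wedge term $v_1-V^{(1)}$... wait, actually $\bigwedge_{1\le i\le 2}(v_i-V^{(i)})$) and $j=M$ (coordinate is simply $v_M-V^{(M)}$). Since the authors relegate the full proof to Appendix~\ref{app:proofsgc}, I would present the skeleton here and carry out this increment-peeling combinatorics in detail there.
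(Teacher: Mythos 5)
Your proposal mirrors the paper's proof exactly: for (a), peel off the first increment via the identity $T^{1}_1=V^{(1)}_1+(0\vee T^1_2)$ and $T^j_1=V^{(j)}_1+(T^{j-1}_2\vee T^j_2)$ for $j\geq 2$, which, after intersecting the events $\{T^j_1\leq v_j\}$ across $j$, yields precisely the wedges in $\Vcal_M$ plus the constraint $V^{(1)}_1\leq v_1$; for (b)/(c), run the same upper-semicontinuity, lexicographic-minimizer, Fatou, and minimality-contradiction argument as in Theorem~\ref{th:psigammaeta}, the only cosmetic difference being that the paper treats $\phi_1=\gamma_M-\psi_M$ and $\phi_2=\psi_M-\eta_M$ simultaneously while you would run the argument twice. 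One small warning: in your "obstacle" paragraph the case analysis is slightly scrambled (the outermost block at level $j$ feeds into level $j-1$ when $i_j=1$, not when $i_j\geq 2$, and the constraint binding $T^j_2$ comes from $v_j$ and $v_{j+1}$, not $v_{j-1}$), but your displayed set-equality for the peeled events is nonetheless the correct one, so this is only a bookkeeping slip and not a gap.
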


The next corollary links the functions $\psi_M$, $\gamma_M$, and $\eta_M$ to $\P(W>x)$.

\begin{corollary}{\sc{(Generic Upper and Lower Bounds)}}\label{cor:WgammaetaM}
Consider the functions $\psi_M$, $\gamma_M$, and $\eta_M$ as in Theorem~\ref{th:psigammaetaM}. Then the waiting time of a job $n\to\infty$ satisfies for all $x\geq 0$

\[\begin{split}&1-\E\left[\eta_M\left(x+\sum_{k=1}^M Y^{M+1-k}_k-S_1, \ldots,x+\sum_{k=1}^M Y^{M+1-k}_k-S_M \right)\right]\leq \P(W> x)\\&\qquad\leq 1-\E\left[\gamma_M\left(x+\sum_{k=1}^M Y^{M+1-k}_k-S_1, \ldots,x+\sum_{k=1}^M Y^{M+1-k}_k-S_M \right)\right].\end{split}\]
\end{corollary}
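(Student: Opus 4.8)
The plan is to follow the proof of Corollary~\ref{cor:Wgammaeta} for $M=2$ essentially verbatim; the only genuinely new ingredients are the bookkeeping of indices and the ordering of the $S^{j}$'s. Abbreviate $\Sigma:=\sum_{k=1}^M Y^{(M+1-k)}_k$, and let $W_n$ denote the random variable $\max_{1\leq j\leq M}\{T^{j}_{M+1,M+n}+S^{j}-\Sigma\}\vee 0$, which by the derivation preceding the statement has the law of job $n$'s waiting time. Under the natural coupling each $T^{j}_{M+1,M+n}$ is non-decreasing in $n$ (the index set of the supremum grows) and increases to $T^{j}_{M+1}$, which is a.s.\ finite under the stability assumption and which, jointly in $j$, has the law of $(T^1_1,\dots,T^M_1)$ after a shift of the i.i.d.\ index; hence $W_n\uparrow W$ a.s., so $\1_{\{W_n>x\}}\uparrow\1_{\{W>x\}}$ and $\P(W_n>x)\uparrow\P(W>x)$ by monotone convergence (Loynes~\cite{Loynes64}). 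It therefore suffices to argue with $W=\max_{1\leq j\leq M}\{T^{j}_1+S^{j}-\Sigma\}\vee 0$.

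First I would rewrite, for $x\geq0$,
\[\P(W>x)=1-\P\!\left(\bigcap_{j=1}^M\big\{T^{j}_1\leq x+\Sigma-S^{j}\big\}\right),\]
the truncation at $0$ being inactive since $x\geq0$. The decisive structural observation is that, after the re-indexing $k\mapsto M+n-k$, the tagged-job quantities $S^1,\dots,S^M$ and $\Sigma$ are built exclusively from service times carrying indices $1,\dots,M$, whereas each $T^{j}_1$ is built from increments $V^{(i)}$ carrying indices $\geq M+1$; since all the driving sequences are i.i.d.\ and mutually independent, $(T^1_1,\dots,T^M_1)$ is independent of $(S^1,\dots,S^M,\Sigma)$. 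Conditioning on the latter and invoking $\psi_M(v_1,\dots,v_M)=\P(T^1_1\leq v_1,\dots,T^M_1\leq v_M)$ from part (a) of Theorem~\ref{th:psigammaetaM} yields the exact identity
\[\P(W>x)=1-\E\!\left[\psi_M\!\left(x+\Sigma-S^{1},\dots,x+\Sigma-S^{M}\right)\right].\]

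Next I would apply parts (b) and (c) of Theorem~\ref{th:psigammaetaM}, which give $\gamma_M\leq\psi_M\leq\eta_M$ on $\supp(\psi_M)$; monotonicity of the expectation then turns the identity above into $1-\E[\eta_M(\cdots)]\leq\P(W>x)\leq1-\E[\gamma_M(\cdots)]$. As a consistency check — and as what is strictly required by the $M=2$ version of part (c) — note that $S^1\geq S^2\geq\cdots\geq S^M$, so the random arguments $v_j:=x+\Sigma-S^{j}$ are ordered, $v_1\leq v_2\leq\cdots\leq v_M$; the inequality $S^{j}\geq S^{j+1}$ follows by specialising $i_{j+1}=M$ in the definition of $S^{j}$, whereupon the $Y^{(j)}$-block of that sum becomes empty and the remaining sum is precisely $S^{j+1}$, so the maximum defining $S^{j}$ dominates $S^{j+1}$.

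The routine parts are the event translation and the conditioning. The step requiring the most care is the decoupling of $(T^1_1,\dots,T^M_1)$ from $(S^1,\dots,S^M,\Sigma)$, which rests on tracking exactly which indices of the $X$- and $Y^{(i)}$-sequences enter each quantity after the $k\mapsto M+n-k$ re-indexing, together with the (easy but easily overlooked) monotone-convergence step needed to pass from job $n$ to the $n\to\infty$ stationary regime. Everything else is identical in form to the $M=2$ proof, so I would not reproduce it.
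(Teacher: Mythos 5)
Your proof follows the paper's argument: unroll the representation of $W$, use the stationarity/i.i.d.\ structure to identify the probability with $\E[\psi_M(\cdots)]$, then sandwich $\psi_M$ between $\gamma_M$ and $\eta_M$ via parts (b) and (c) of Theorem~\ref{th:psigammaetaM}; the paper's own proof is exactly this, phrased more tersely (it says ``from the stationarity of $T_k^j$'' where you spell out the disjoint-index-set independence). One side remark in your write-up is, however, false: the claimed ordering $S^1\geq S^2\geq\cdots\geq S^M$ does not hold. Specialising $i_{j+1}=M$ in $S^j$ restricts the remaining maximisation to $i_{j+2}<M$, so it reproduces only a \emph{strict subset} of the terms of $S^{j+1}$, not $S^{j+1}$ itself. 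Concretely, for $M=3$, $S^2$ contains the term $Y^{(3)}_1+Y^{(3)}_2+Y^{(3)}_3$ (with $i_3=3$) which $S^1$ cannot attain (since $S^1$ forces $i_3<i_2\leq 3$), so $S^2>S^1$ when $Y^{(3)}_3$ is large. Fortunately this is immaterial, exactly as you anticipated: part (c) of Theorem~\ref{th:psigammaetaM} yields $\psi_M\leq\eta_M$ on all of $\Rbar^M$ without any ordering restriction on the arguments (unlike the refined $v\geq u$ version in the $M=2$ Theorem~\ref{th:psigammaeta}), so the ``consistency check'' is not needed and its failure does not affect the proof.
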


The last result proves the existence of poly-exp upper bounds.

			\begin{theorem}{\sc{(Existence of Poly-Exp Upper Bounds)}}\label{th:existenceM}
				Fix $m\in \N$. Assume that $\theta_i:=\sup\{r>0:\forall 1\leq j\leq i:\E[e^{r V^{(j)}}]\leq 1\}$  for random variables $V^{(i)}, i\in\{1,2,\ldots,m\}$.
				Let \begin{equation}I_i(\theta):=\left\lbrace\begin{split}1& \qquad \text{if }~\E[e^{\theta V^{(i)}}]=1\\0&\qquad \text{otherwise} \end{split}\right.\label{eq:Iitheta}\end{equation}
				and
					\[d_{1}(\theta):=0, \quad 	d_{j}(\theta):=I_2(\theta)+\ldots+I_j(\theta), \qquad  2\leq j\leq M~. \]
				Let $a\in \R$ and
				\[\Dcal_a^M:=\left\lbrace (v_1, \ldots, v_M):  v_1\geq -a_+, v_2\geq -a, \ldots, v_M\geq -a\right\rbrace~.\]
				Let $\zeta_1:=\theta_1$ and for $2\leq i\leq M$, let $\zeta_i:=\theta_i$ if either $ \theta_{i-1}=\theta_i$ or $d_{i-1}(\theta_{i-1})=0$, otherwise let $\zeta_i\in (\theta_{i}, \theta_{i-1})$. Suppose that for all $ (v_1, \ldots, v_M)\in \Dcal^M_a$ and all $i\in \{1,2,\ldots, M\}$ and $j\in\{1,\ldots,d_{i-1}(\theta_{i-1})\vee d_i(\theta_i)\}\cap (2\Z+1)$
				\[ \E\left[\left(V^{(i)}-v_i\right)^je^{\zeta_i (V^{(i)}-v_i)}\mid V^{(i)}>v_i\right]\leq K_i^j<+\infty~.\]
				Then there exists the polynomials  $ P_i:\R^{i}\to \R$ with degrees $d_i(\theta_i)$, respectively, for $i\in\{1,2,\ldots,M\}$ with
				\[P_{i}(v_1, \ldots, v_i)\geq 0\]
				for all $1\leq i\leq M$ and $ (v_1, \ldots, v_M)\in \Dcal^M_a$ such that
				\[ \gamma_M(v_1,\ldots,v_M):=\1_{\{ (v_1, \ldots, v_M)\in \Dcal^M_a\}}\left[ 1-\sum_{i=1}^{M}P_i(v_1, \ldots, v_i)e^{-\theta_i v_i}\right]\]
				satisfies \eqref{gammamnode}.
			\end{theorem}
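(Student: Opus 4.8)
The plan is to replay the two-step argument behind Theorem~\ref{th:existence2}, now carrying $M$ coupled exponential--polynomial terms rather than two. In \textbf{Step 1} I would construct, recursively on the queue index $i=1,\dots,M$, univariate polynomials $Q_i$ of degree $d_i(\theta_i)$ with non-negative coefficients, and then set $P_i(v_1,\dots,v_i):=Q_i(v_i+a)$ (so each $P_i$ depends effectively only on its last argument, exactly as $P_2(u,v)=Q_2(v+a)$ in the $M=2$ proof). Along the recursion I would establish a chain of \emph{scalar} integral inequalities, one per queue, generalizing \eqref{Q1Step1}--\eqref{Q2Step1}: the $i$-th inequality bounds $Q_i(v_i+a)e^{-\theta_i v_i}$ from below by the sum of (a) the queue-$i$ ``self-term'' $\E[\1_{\{v_i\geq V^{(i)}\}}Q_i(v_i-V^{(i)}+a)e^{\theta_i(V^{(i)}-v_i)}]$, (b) a ``spillover'' term inherited from queue $i-1$, and (c) $\P(V^{(i)}>v_i)$. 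In \textbf{Step 2} I would substitute $\Vcal_M(v_1,\dots,v_M)$ into the candidate $\gamma_M$, split every minimum occurring in the coordinates of $\Vcal_M$ through $e^{-\theta[p\wedge q]}=\max\{e^{-\theta p},e^{-\theta q}\}\leq e^{-\theta p}+e^{-\theta q}$ (and likewise $Q_i(p\wedge q+a)e^{-\theta_i[p\wedge q]}\leq Q_i(p+a)e^{-\theta_i p}+Q_i(q+a)e^{-\theta_i q}$ on the region where $Q_i\geq 0$), and then telescope: the ``first half'' of queue $i$'s term feeds queue $i$'s own scalar inequality from Step~1, while the ``second half'' is precisely the spillover absorbed by queue $i+1$. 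Re-summing over $i$ and adding back the constant $1$ recovers $\gamma_M$ on $\Dcal_a^M$, giving \eqref{gammamnode}; boundedness and $\gamma_M(\infty,\dots,\infty)=1$ are immediate.

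For the base of the Step-1 recursion I would take $Q_1:=(\inf_{v\geq -a_+}\E[e^{\theta_1(V^{(1)}-v)}\mid V^{(1)}>v])^{-1}$ exactly as in Theorem~\ref{th:existence2} (the Ross prefactor), making the marginal $\gamma_1(v_1):=\1_{\{v_1\geq -a_+\}}(1-Q_1e^{-\theta_1 v_1})$ satisfy the one-dimensional Kingman-type estimate. For the recursive step at queue $i\geq 2$, the spillover reaches coordinate $i$ as $Q_{i-1}(v_i-V^{(i)}+a)e^{-\theta_{i-1}(v_i-V^{(i)})}$, i.e.\ a degree-$d_{i-1}(\theta_{i-1})$ polynomial times $e^{-\theta_{i-1}(\cdot)}$. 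Here the case split on $\zeta_i$ is the point: if $\theta_{i-1}=\theta_i$ or $d_{i-1}(\theta_{i-1})=0$ I keep the rate $\zeta_i=\theta_i$, so the spillover either matches queue $i$'s rate (its degree then adding to give $d_i(\theta_i)=d_{i-1}(\theta_i)+I_i(\theta_i)$) or is already a bare constant; if instead $\theta_{i-1}>\theta_i$ and $d_{i-1}(\theta_{i-1})>0$ I invoke $\zeta_i\in(\theta_i,\theta_{i-1})$ together with the elementary bound $t^{d}e^{-\theta_{i-1}t}\leq C_i\, e^{-\zeta_i t}$ valid for $t\geq 0$ (the bounded range $t\in[-a,0]$ being harmless because $\Dcal_a^M$ forces $v_i-V^{(i)}\geq -a$), so that the spillover collapses to $C_i e^{-\zeta_i(v_i-V^{(i)})}$ and contributes only degree $0$. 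In every case I then solve for the coefficients of $Q_i$ by the same split-and-bound computation as in \eqref{Al-psi-A0}--\eqref{Al-psi-A0tight}: the moment hypotheses $\E[(V^{(i)}-v_i)^j e^{\zeta_i(V^{(i)}-v_i)}\mid V^{(i)}>v_i]\leq K_i^j$, imposed for odd $j$ up to $d_{i-1}(\theta_{i-1})\vee d_i(\theta_i)$, are exactly what bounding the self-term consumes (odd powers of $v_i-V^{(i)}$ carry the ``wrong'' sign on $\{V^{(i)}>v_i\}$ and so need a bound, even powers may simply be discarded), while the leading coefficient of $Q_i$ is forced strictly positive precisely when $\E[e^{\theta_i V^{(i)}}]=1$ (to neutralise the residual $e^{-\theta_i v_i}$, cf.\ the corresponding remark in the $M=2$ proof), producing degree exactly $d_i(\theta_i)$ and non-negativity of $Q_i$ on $\Dcal_a^M$.

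The main obstacle I anticipate is the bookkeeping of how the exponential/polynomial terms propagate through the nested minima of $\Vcal_M$, and in particular checking that the degree of $P_i$ comes out exactly $d_i(\theta_i)$: not larger --- the $\zeta_i$-trick must genuinely kill polynomial growth across a strict rate gap $\theta_i<\theta_{i-1}$, which is why $\zeta_i$ is taken strictly inside the gap and why the moment conditions are stated at $\zeta_i$ rather than $\theta_i$ --- and not smaller --- the forced-positive leading coefficient when $\E[e^{\theta_i V^{(i)}}]=1$. A secondary nuisance is the interaction of the indicator $\1_{\{(v_1,\dots,v_M)\in\Dcal_a^M\}}$ with the minima: upon substituting $\Vcal_M$ it collapses to $\1_{\{v_1\geq V^{(1)},\,V^{(j)}\leq v_j+a\text{ for }2\leq j\leq M\}}$, after which the ``discard the event'' manoeuvre from the $M=2$ proof applies verbatim. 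Everything else --- the positivity of the $Q_i$'s used to split the minima, the split-and-bound estimates, the final re-summation, and $\gamma_M(\infty,\dots,\infty)=1$ --- is a higher-dimensional but essentially mechanical transcription of the proof of Theorem~\ref{th:existence2}, so the full computation, lengthy as it is, introduces nothing essentially new.
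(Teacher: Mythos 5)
Your proposal follows the paper's proof closely: Step~1's scalar recursion on univariate polynomials $Q_i$ with $P_i(v_1,\ldots,v_i)=Q_i(v_i+a)$, the rate parameter $\zeta_i$ used to collapse the spillover from queue $i-1$ across a strict gap $\theta_i<\theta_{i-1}$, the odd-power moment bounds handling the sign of $(V^{(i)}-v_i)^j$ on $\{V^{(i)}>v_i\}$, and the forced-positive leading coefficient when $\E[e^{\theta_i V^{(i)}}]=1$ are all exactly the paper's ingredients. The paper organizes Step~2 as an induction on the number of queues (peeling off the last coordinate, splitting the single min involving $v_m$, and invoking the induction hypothesis on the first $m-1$ marginals) rather than a simultaneous split-and-telescope over all $i$, but this is a presentational choice equivalent to your telescoping; the underlying event-discard and min-split manipulations are identical.
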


We note that, unlike in Theorem~\ref{th:existence2} for $M=2$, there is an additional stronger requirement on the conditional expectations involving the new parameter $\zeta$; this is related to the `lightness' of service times' tails (see the proof for the details). The definition of $\Dcal_a^M$ also involves stronger requirements on the conditional expectations than in Theorem~\ref{th:existence2}; this is due to additional technicalities encountered for $M\geq 3$.

Importantly, the degree of the polynomial $P_M(v_1,\dots,v_M)$, which dictates the behavior of the tail $\P(W>x)$ according to Corollary~\ref{cor:WgammaetaM}, depends on the indicator functions from~\eqref{eq:Iitheta}. At one extreme, if all queues are homogeneous and $\E[e^{\theta^+ (Y-X)}]\geq 1$ (recall the description on `light-tailedness' from the beginning of~\ref{sec:m2}), then the degree of $P_M$ is $M-1$, as all indicators $I_i$ but the first are $1$. At another extreme, if there is a single bottleneck, and $\E[e^{\theta^+ (Y-X)}]\geq 1$, then the degree of $P_M$ depends on the position of the bottleneck: if it comes first then the degree is $0$,  otherwise it is $1$ regardless the position (e.g., second or last). Another extreme is when $\E[e^{\theta^+ (Y-X)}]< 1$, e.g., as for the distribution given in Appendix~\ref{app:vld}, in which case the degree of $P_M$ is $0$.

\section{Explicit Poly-Exp Bounds + Numerics}\label{sec:example}
We return to the $M=2$ case from \ref{sec:m2} and provide explicit solutions for $GI/M/1\rightarrow \cdot/M/1$, i.e., exponential service times (with equal rates at both queues), and further simplify them for both $D/M/1\rightarrow \cdot/M/1$ and $Gamma/M/1\rightarrow \cdot/M/1$, i.e., deterministic or Gamma inter-arrival times. Then we derive alternative (non-asymptotic) bounds using large deviations and lastly show numerical comparisons.

Recall the notation $V=Z-X$, $U=Y-X$, where $Z, Y$ and $X$ are independent, and assume that $\E[e^{\theta U}]=\E[e^{\theta V}]=1$ for some $\theta>0$. Let for $R\in \{Y,Z\}$ and $r\geq 0$
		\[K_i^R( r):=\E[(R-r)^ie^{\theta (R-r)}\mid R> r],  \quad \bar{F}_R(r):=\P(R> r)~.\]
		
According to Theorem~\ref{th:psigammaeta} (Part (b)), and inspired from Theorem~\ref{th:existence2}, we need to find the constants  $a, A, B, C, D$ such that, by defining $\gamma:\Dcal_a:=\{(u,v):u\geq  -a_+,~v\geq (-a)\vee u\}\to \R$
\[\gamma(u,v):=\1_{\{ (u,v)\in \Dcal_a\}}\left[ 1-(A+Bv+Cu)e^{-\theta_2 v}-De^{-\theta u}\right]~, \]
the following inequalities hold: $D\geq0$, $A+Bv+Cu\geq0~\forall (u,v)\in \Dcal_a$, and \eqref{pre-estimatepsi} from Theorem~\ref{th:psigammaeta}, i.e.,

		\begin{equation}
			\begin{split}
				\E\Bigg[&\1_{\{u\geq  U, v+a\geq  V\}}\Big[ 1- D\exp\left\lbrace -\theta\left[\left(u-U\right)\wedge \left(v-V\right)\right]\right\rbrace\\&-\left(A+B\left(v-V\right)+C\left[\left(u-U\right)\wedge \left(v-V\right)\right]\right)\exp\left\lbrace -\theta \left(v-V\right)\right\rbrace\Big]\Bigg]\\&\geq 1-De^{-\theta u}-(A+Bv+Cu)e^{-\theta v}~,\label{eq:gg1gg1cond}
			\end{split}
		\end{equation}
for all $(u,v)\in \supp(\gamma\vee 0)\in \Dcal_a$. We recall that $a\in\R$ is an optimization parameter which turns out to be crucial for the overall numerical sharpness of the bounds, at the expense however of making the underlying calculations more tedious.

To find the parameters $A, B, C, D$, and also $a$, we next adopt a sub-optimal approach in the sense of constructing a sufficient set of inequalities for~\eqref{eq:gg1gg1cond}, which are easier to handle separately, and match the parameters; see Appendix~\ref{app:fppGM}.

For the $GI/M/1\rightarrow \cdot/M/1$ model, i.e., $Y,Z\simeq Exp(\mu)$ and $\E[X]=\frac{1}{\mu\rho}$, let $\theta>0$ such that $\E[e^{\theta( Y-X)}]=1$, or, equivalently, $\E[e^{-\theta X}]=(\mu-\theta)/\mu $; existence follows from the stability condition $\rho<1$. Observing first that
		\[K_i^Y( x)=\frac{\mu\times  i!}{(\mu-\theta)^{i+1}}~,\]
three of the main parameters follow immediately from the inequalities 2, 4, 5, 6, and 7 (as listed in Lemma~\ref{cor:gg1gg1exp} from Appendix~\ref{app:fppGM}):
		\[D=\frac{\mu-\theta}{\mu}\qquad  C=\frac{\theta (\mu-\theta)D}{\mu},\qquad B=C\times \frac{\left(\mu\E\left[Xe^{-\theta X}\right]-\frac{\mu-\theta}{\mu}\right)_+}{1-\mu\E\left[Xe^{-\theta X}\right]}~.\]

The remaining parameters $A$ and $a$ can be obtained by treating separately the remaining inequalities 1, 3, and 8 (again, from the same Lemma~\ref{cor:gg1gg1exp}). From the first two we define

	\[\begin{split} A_1(a)&:=aB+\sup_{x\geq -a_+}\Bigg\lbrace\Bigg( B\E\left[\left(\frac{1}{\mu-\theta}-X\right)e^{-\theta X}\left(e^{-\mu(X+x)}\1_{\{X+x\geq 0\}}+\1_{\{X+x<0\}}\right)\right]\\& +C\E\left[e^{-\theta X}\left(\frac{1}{\mu}e^{-\mu(X+x)}\1_{\{X+x\geq 0\}}+\left(\frac{1}{\mu}-X-x\right)\1_{\{X+x<0\}}\right)\right]\Bigg)\\& \times \left(\E\left[e^{-\theta X}\left(e^{-\mu(X+x)}\1_{\{X+x\geq 0\}}+\1_{\{X+x<0\}}\right)\right]\right)^{-1}\Bigg\rbrace \end{split}\]
and
	\[A_2(a):=a(B+C)+ \frac{e^{-\theta a}(\mu-\theta)}{\mu}+\frac{B+C}{\mu-\theta}-D~.\]
Observing that
	\[\E[(t-Y)\mid t\geq Y>t+a]=\frac{(t-(t+a)_+)e^{\mu(t-(t+a)_+)}}{e^{\mu(t-(t+a)_+)}-1}-\frac{1}{\mu}\]
and
		\[\E[e^{\theta(Y-t)}\mid t\geq Y>t+a]=\frac{\mu\left(e^{(\mu-\theta)(t-(t+a)_+)}-1\right)}{\left(e^{\mu(t-(t+a)_+)}-1\right)(\mu-\theta)}~, \]
we obtain from the eight inequality
	\[A_3(a):=B\left(a+\frac{1}{\mu-\theta}\right) -\min_{\ess\inf (X)\wedge (-a)\leq r\leq -a}\left\lbrace\frac{De^{-\theta a}\left(1-e^{(\theta-\mu) r}\right)+Cr}{1-e^{-\mu r}}\right\rbrace +\frac{C}{\lambda} +\frac{e^{-\theta a}(\mu-\theta)}{\mu}~.\]

We can now define
	\begin{equation}A:= \min_{a\geq 0}(A_1(a)\vee A_2(a))\wedge \min_{a\leq 0}(A_1(a)\vee A_2(a)\vee A_3(a))~.\label{eq:dm2_A}\end{equation}
		Letting 
		\[a_1:=\arg\min_{a\geq 0}(A_2(a))= \left(\frac{\ln(\theta (\mu-\theta)/\mu (B+C))}{\theta}\right)_+\]
			 and observing that $A_1(a)$ is non-decreasing on $\R$, whereas $A_2(a)$ is non-decreasing on $[a_1,\infty)$ and non-increasing on $[0,a_1]$, the optimal parameter $a$ for the first minimum from the expression of $A$ from \eqref{eq:dm2_A} is
		\begin{align*}
			a:=\arg\min_{a\geq 0}(A_1(a)\vee A_2(a))=\sup\left(\{0\leq a\leq a_1: A_1(a)\leq A_2(a)\}\cup\{0\}\right)~.
		\end{align*}

Lastly, we  apply Corollary~\ref{cor:Wgammaeta} to obtain the bounds on the tail of $W$, i.e.,

		\begin{align*}
			&\P(W> x)=1-	\P\left(T^{2}_{1}\leq x-Z+Y, T^{1}_{1}\leq x- (Z-Y)_+\right)\\&\leq 1-\E\Bigg\lbrace\1_{\{x-Z+Y\geq -a\}}\Big[1-D\exp(-\theta (x- (Z-Y)_+))\\&\quad -(A+B(x- (Z-Y))+C(x- (Z-Y)_+))\exp(-\theta (x- (Z-Y)))\Big]\Bigg\rbrace~.
\end{align*}
We now distinguish two cases. If $x\geq (-a)_+$ then the bound becomes
\begin{align}
			\P(W>x)&\leq\frac{1}{2}e^{-\mu (x+a)}+\frac{D\left((2\mu-\theta)e^{-\theta x}-\mu e^{-\mu x+(\theta-\mu)a}\right)}{2(\mu-\theta)}\notag\\&\quad+C\left(\frac{x\mu^2}{\mu^2-\theta^2}e^{-\theta x}+\frac{\mu(1+(\mu-\theta)a)}{2(\mu-\theta)^2}e^{-\mu x+(\theta-\mu)a}-\frac{\mu}{2(\mu-\theta)^2}e^{-\theta x}\right)
			\notag\\&\quad +A\left(e^{-\theta x}\frac{\mu^2}{\mu^2-\theta^2}-e^{-\mu x+(\theta-\mu)a}\frac{\mu}{2(\mu-\theta)}\right)\notag\\&\quad +B\left(\frac{\mu^2}{\mu^2-\theta^2}x e^{-\theta x}-\frac{2\mu^2\theta}{(\mu^2-\theta^2)^2}e^{-\theta x}+\frac{\mu(1+(\mu-\theta)a)}{2(\mu-\theta)^2}e^{-\mu x+(\theta-\mu)a}\right)~.\label{eq:dm2_boundcase1}\end{align}
Otherwise, if $0\leq x< -a$, the bound becomes
\begin{align} \P(W>x)&\leq 1-\frac{1}{2}e^{\mu(x+a)}+\frac{1}{2}De^{(\mu-\theta)x+\mu a}\notag\\&\qquad+\frac{\mu}{2(\mu+\theta)}e^{(\mu+\theta)a+\mu x}\left(A+B\left(\frac{1}{\mu+\theta}+a\right)+Cx\right)~.\label{eq:dm2_boundcase2}
		\end{align}
We observe that the two bounds on $\P(W>x)$ have a \textit{mixed} poly-exp structure as they involve two exponentials (in $\theta$ and $\mu$) along with corresponding polynomials.

In the $D/M/1\rightarrow \cdot/M/1$ special case when $X$ is a constant we can simplify
		\[A_1(a)=aB+B\left(\frac{1}{\mu-\theta}-X\right)+C\left(\frac{1}{\mu}+(a_+-X)_+\right)~.\]

In the other $Gamma/M/1\rightarrow \cdot/M/1$ special case, i.e., $X\simeq Gamma(\alpha, \beta)$ has a Gamma distribution with shape $\alpha$ and rate $\beta$ and distribution function $F_X(x)=G(x,\alpha, \beta)$, we have
		$\E[Xe^{-\theta X}]=\alpha \beta^\alpha/(\beta +\theta)^{\alpha+1}$ in the expression of $B$ and
		\[\begin{split} A_1(a)&=aB+\sup_{x\geq -a_+}\Bigg\lbrace \Bigg[ \left(\frac{B}{\mu-\theta}+\frac{C}{\mu}\right)\left(\frac{\beta}{\beta+\theta+\mu}\right)^\alpha e^{-\mu x}(1-G(-x, \alpha,\beta+\theta+\mu ))\\&\quad +\left(\frac{B}{\mu-\theta}+\frac{C}{\mu}-Cx\right)\left(\frac{\beta}{\beta+\theta}\right)^\alpha G(-x, \alpha,\beta +\theta) -(B+C)\left(\frac{\alpha\beta^\alpha}{(\beta+\theta)^{\alpha+1}}\right)G(-x, \alpha+1, \beta+\theta)\\&\quad -B\left(\frac{\alpha\beta^\alpha}{(\beta+\theta+\mu)^{\alpha+1}}\right)e^{-\mu x}\left(1-G(-x, \alpha+1, \beta+\theta+\mu)\right)
			\Bigg]\\&\quad \times\left[\left(\frac{\beta}{\beta+\theta+\mu}\right)^\alpha e^{-\mu x}(1-G(-x, \alpha,\beta+\theta+\mu ))+\left(\frac{\beta}{\beta+\theta}\right)^\alpha G(-x, \alpha,\beta +\theta)\right]^{-1}\Bigg\rbrace~. \end{split}\]

\subsection{A Large Deviations / Network Calculus Approach}\label{sec:nc}
Here we present alternative bounds using the standard large deviations / network calculus approach, which crucially relies on the Union Bound
$$\E\left[\max\{X,Y\}\right]\leq \E[X]+\E[Y]\qquad\textrm{or}\qquad\P(A\cup B)\leq\P(A)+\P(B)~,$$
for positive r.v. $X$ and $Y$, or events $A$ and $B$. One advantage of this approach is that it yields the \textit{exact} asymptotic decay rates (e.g., for $\P(D>x)$, see~\cite{Ganesh98}). Another, as shown in several applications of network calculus, is that it enables the analysis of queueing networks with broad classes of arrivals, service times, or scheduling algorithms, and can further lead to the exact asymptotic scaling of sojourn times~(\cite{HLiu03,Book-Chang,CiBuLi05,Fidler06,Jiang08,BuLiCi11}).

The drawback of this class of results is poor numerical tightness, particularly in non-asymptotic regimes (i.e., for finite values of $x$ in the case of $\P(D>x)$). This issue was brought up in the context of the (large-deviations-based) effective bandwidth literature from the late 80's - 90's. Choudhury~\et~\cite{choudhury96squeezing} revealed large numerical discrepancies, of several orders of magnitude, between effective bandwidth results and simulations. More recently, similar numerical issues have been reported about network calculus results concerning single queues only; in addition, it was shown that relying on Kingman's GI/G/1 bound (recall ~\ref{sec:ierw}), as opposed to the Union Bound, can largely fix the issue of numerical tightness in single queues~\cite{PoCi14}.

The large numerical inaccuracies mainly stem from the obliviousness of the Union Bound to the underlying correlations; this issue is particularly pronounced  in non-Poisson/non-memoryless type of events, as indicated by Talagrand~\cite{tala96b}. Moreover, the underlying numerical errors accumulate over an infinite number of applications of the Union Bound, as waiting/sojourn times involve whole sample-paths.

For example, in the context of the waiting-time $W$ formulation from~\eqref{eq:W2nodes}, the large deviations approach proceeds by first computing the bounds
\begin{equation*}\left\{\begin{array}{ll}\P\left(\max_{3\leq  i\leq n+2}U_3+\cdots+U_{i}>x\right)\leq(\beta+\beta^2+\beta^3+\dots)e^{-\theta x}=\frac{\beta}{1-\beta}e^{-\theta x}\\\P\left(\max_{3\leq  i<j\leq n+2}V_3+\cdots+V_{i}+U_{i+1}+\cdots+U_{j}>x\right)\leq(\beta^2+2\beta^3+3\beta^4+\dots)e^{-\theta x}=\frac{\beta^2}{(1-\beta)^2}e^{-\theta x}\end{array}\right.~,\end{equation*}
by repeatedly using the Chernoff and Union Bounds, where $\beta=\E\left[e^{\theta(Y-X)}\right]$ and $\theta>0$ is chosen such that $\beta<1$ (to guarantee the convergence of the infinite series). The former bound concerns the maximum of a random walk, an event characteristic to single queues, and follows by infinitely applying the Union Bound -- at the expense of disregarding correlations within the partial sums $U_3+\dots+U_i$. In turn, the latter concerns an event involving a double-maximum (over $i$ and $j$), which is characteristic to a tandem of two queues; the bound itself follows from a nested infinite application of the Union Bound, while also disregarding correlations within the underlying partial sums. The numerical inaccuracies associated with the former bound, as reported in the single-queues literature, conceivably exacerbate in the case of the latter bound targeting tandem queues.

In the case when $Y$ and $Z$ are exponentials with rate $\mu$, we obtain by applying the Union Bound one more time, along with double integration, that \begin{align}
\P(W>x)&\leq \P\left(\max_{3\leq  i\leq n+2}U_3+\cdots+U_{i}+(Z-Y)_+>x\right)\notag\\
&\qquad+\P\left(\max_{3\leq  i<j\leq n+2}V_3+\cdots+V_{i}+U_{i+1}+\cdots+U_{j}+Z-Y>x\right)\notag\\
&\leq \inf_{\{0<\theta<\mu:\beta<1\}}\left(\frac{\beta}{1-\beta}\frac{2\mu-\theta}{2(\mu-\theta)}+\left(\frac{\beta}{1-\beta}\right)^2\frac{\mu^2}{(\mu-\theta)(\mu+\theta)}\right)e^{-\theta x}~.\label{eq:dm2_ld}
\end{align}
Due to the underlying transcendental nature of the bound, the optimal value of $\theta$ requires a numerical search.

\subsection{Numerical Comparisons}
Here we numerically illustrate the main bounds from~\eqref{eq:dm2_boundcase1}-\eqref{eq:dm2_boundcase2} against the large-deviations-based ones from \eqref{eq:dm2_ld}. We also include simulation results obtained from $10^5$ runs and sample paths of $10^4$ points (the `$n$' from \eqref{eq:dm2_ld}); the running time for this setting is $10^{13}$, due to the event involving a nested `$\max$' requiring itself $10^8$ time.

\begin{figure}[h]
\vspace{-0cm}
\shortstack{\hspace{0cm}
\includegraphics[width=0.323\linewidth,keepaspectratio]{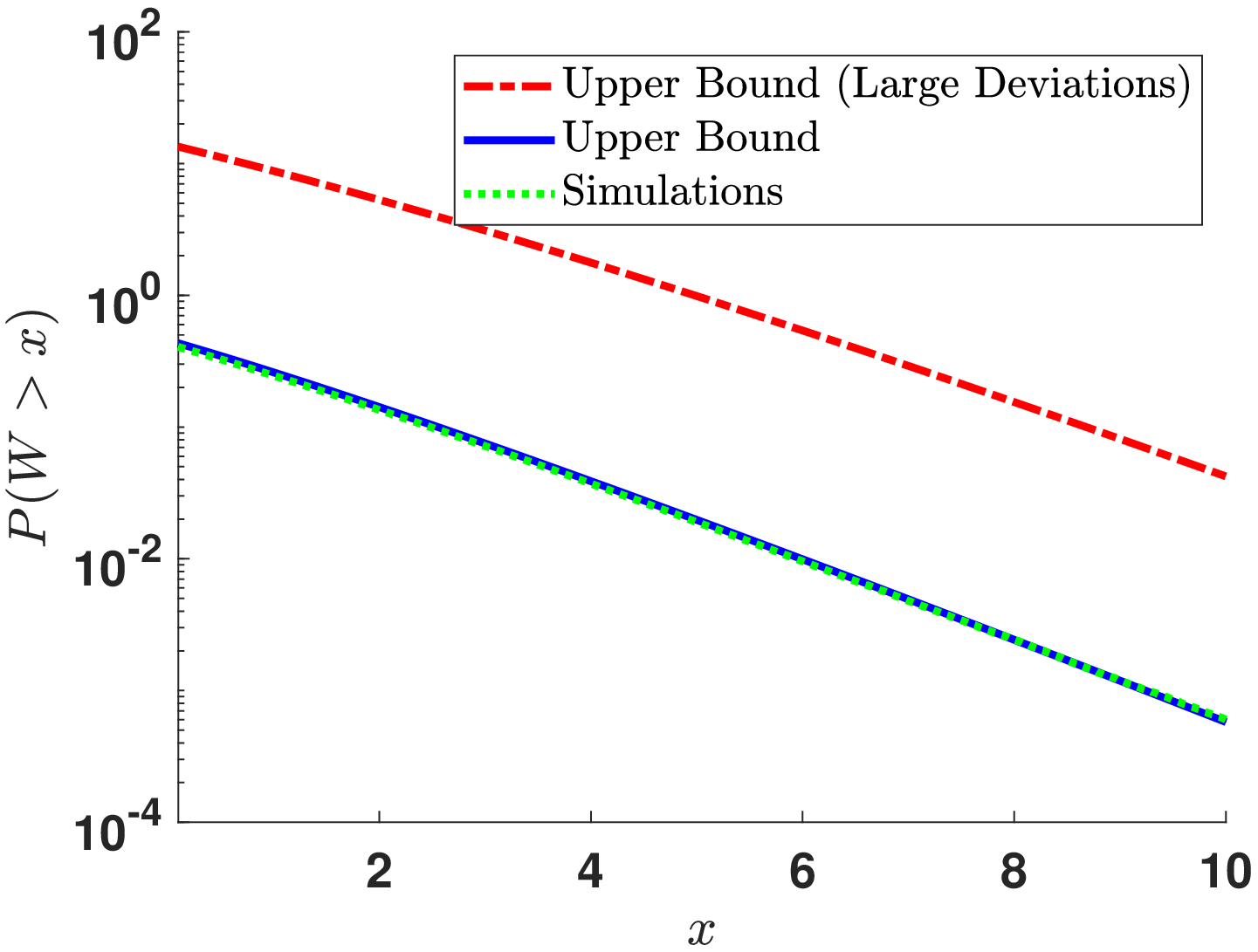}
\\
{\footnotesize (a) $\rho=0.5$} }
\shortstack{\hspace{0cm}
\includegraphics[width=0.323\linewidth,keepaspectratio]{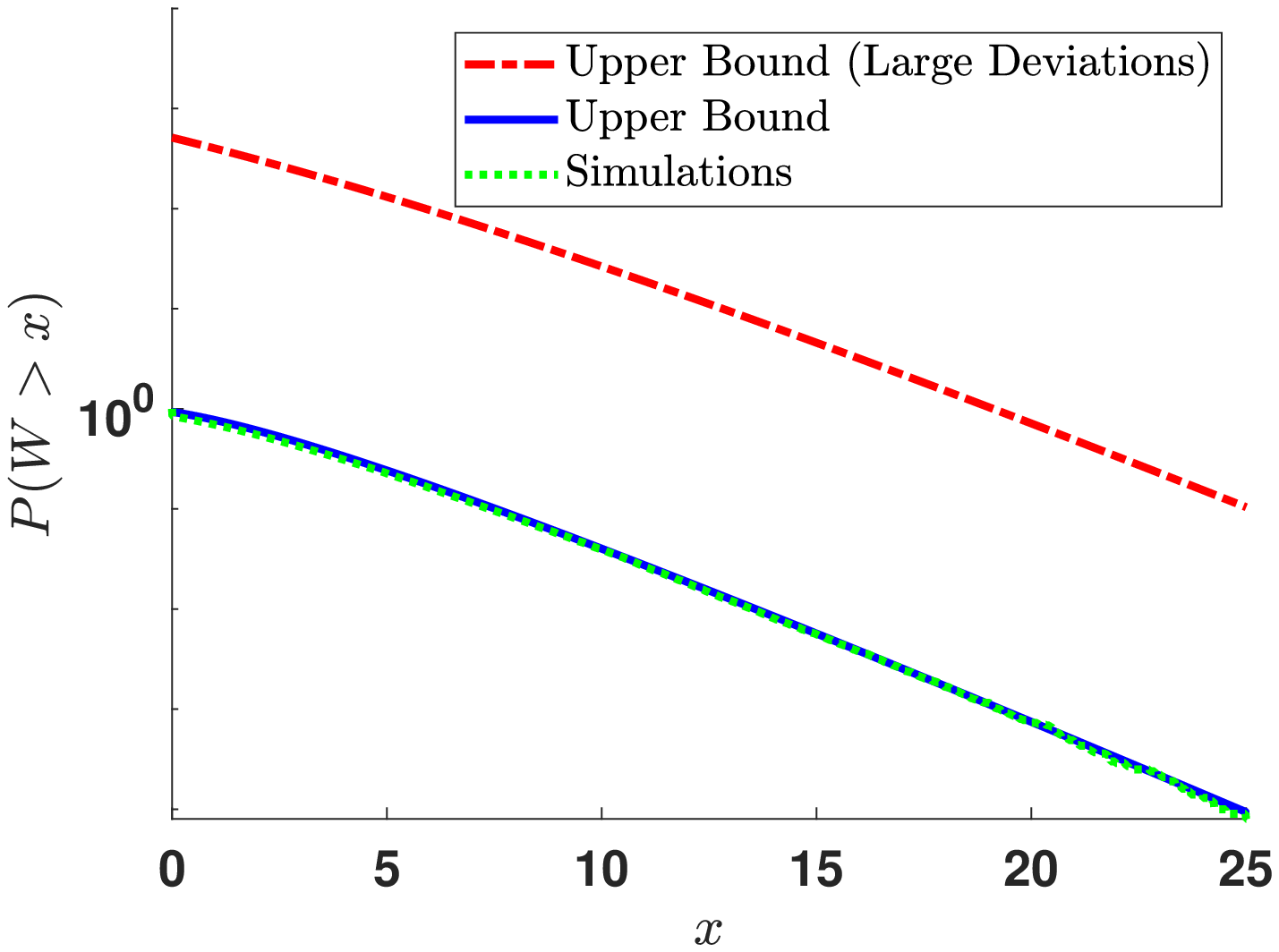}
\\
{\footnotesize (b) $\rho=0.75$}}
\shortstack{\hspace{0cm}
\includegraphics[width=0.323\linewidth,keepaspectratio]{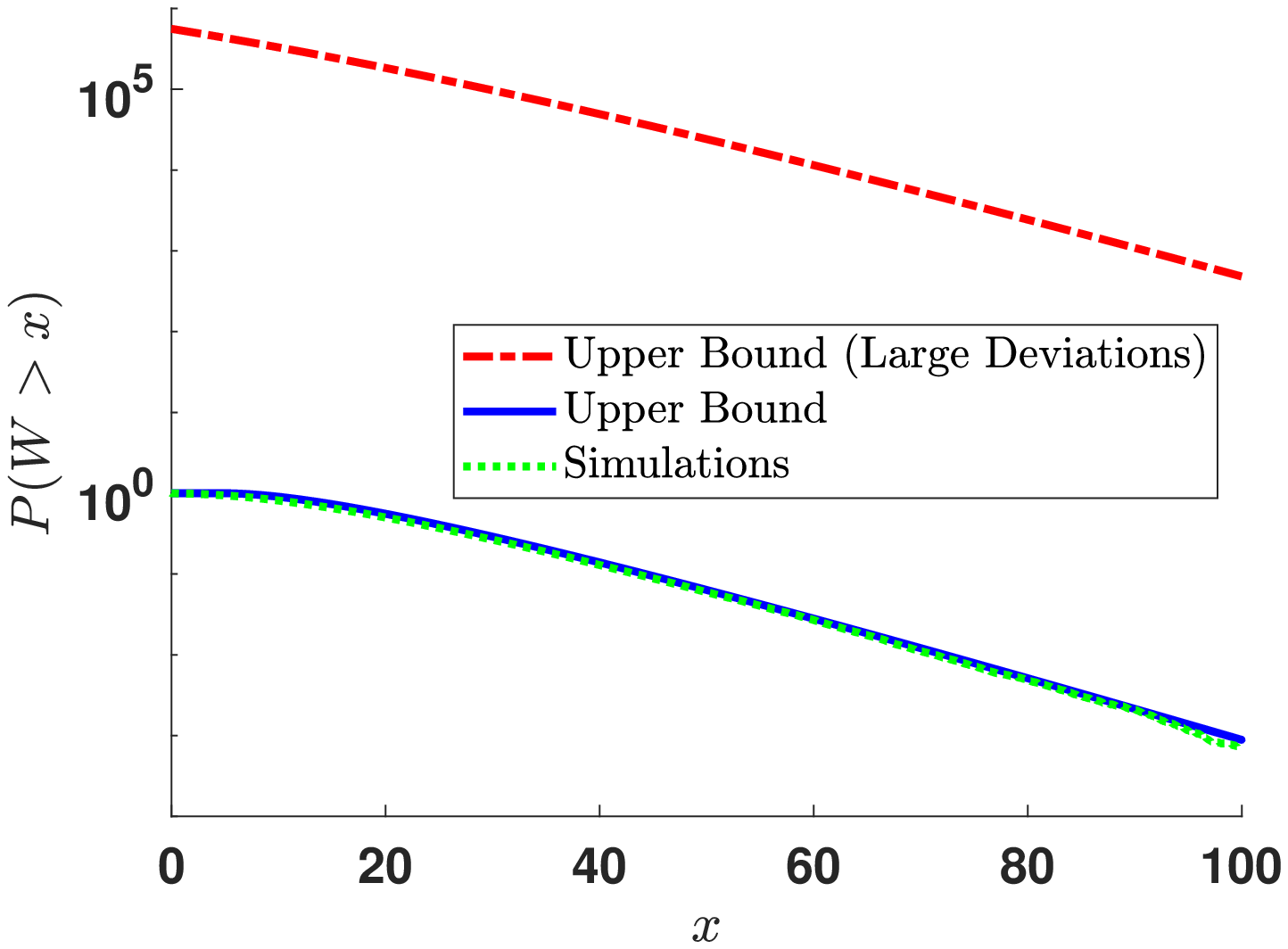}
\\
{\footnotesize (c) $\rho=0.95$} }

 \caption{The waiting time CCDF $\P(W>x)$ for the $D/M/1\rightarrow \cdot/M/1$ tandem: Upper Bounds (from \eqref{eq:dm2_boundcase1}-\eqref{eq:dm2_boundcase2}) vs. Upper Bounds (from \eqref{eq:dm2_ld}, based on large deviations) vs. Simulations} \label{fig:DM2}
\end{figure}

\begin{figure}[h]
\vspace{-0cm}
\shortstack{\hspace{0cm}
\includegraphics[width=0.323\linewidth,keepaspectratio]{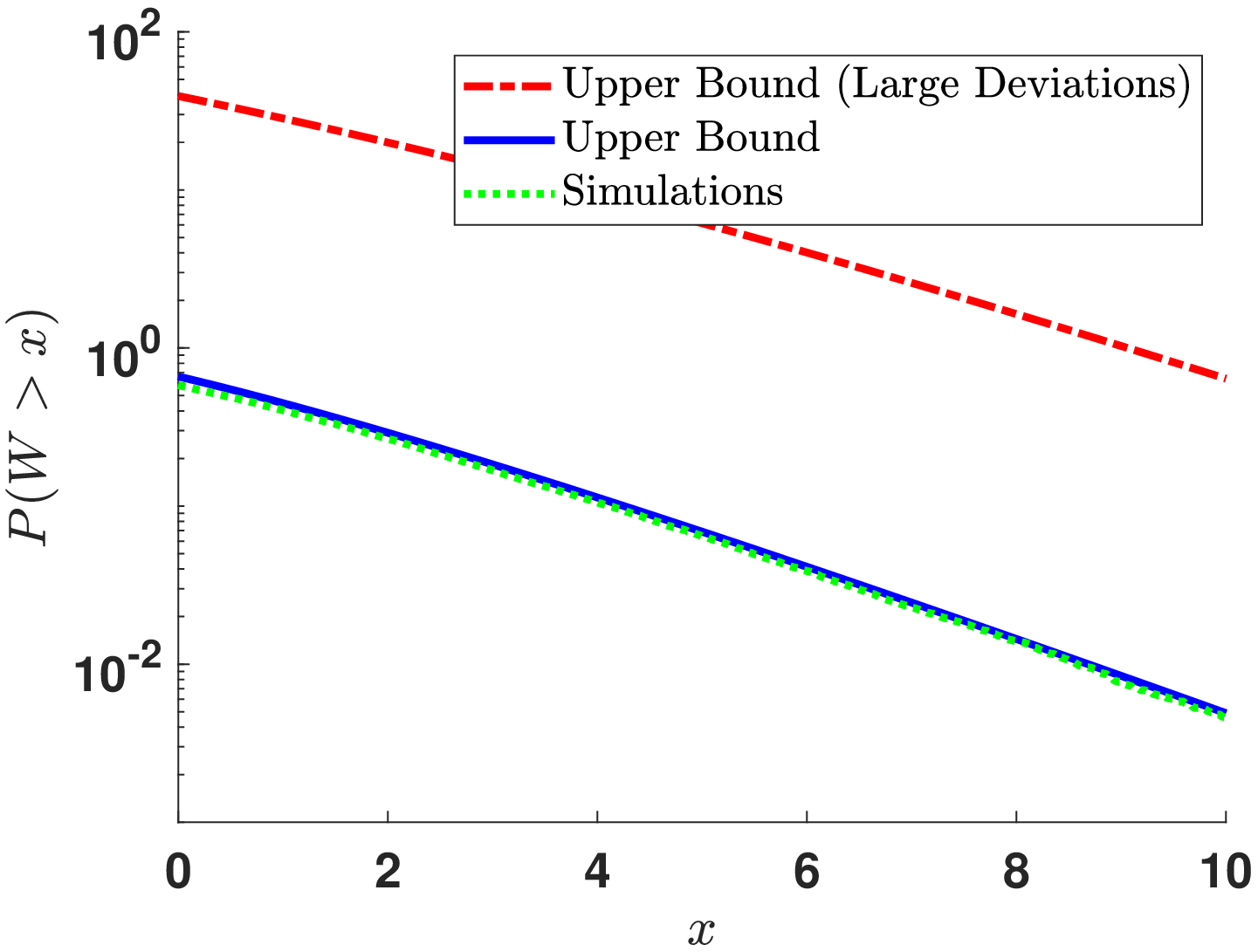}
\\
{\footnotesize (a) $\rho=0.5$} }
\shortstack{\hspace{0cm}
\includegraphics[width=0.323\linewidth,keepaspectratio]{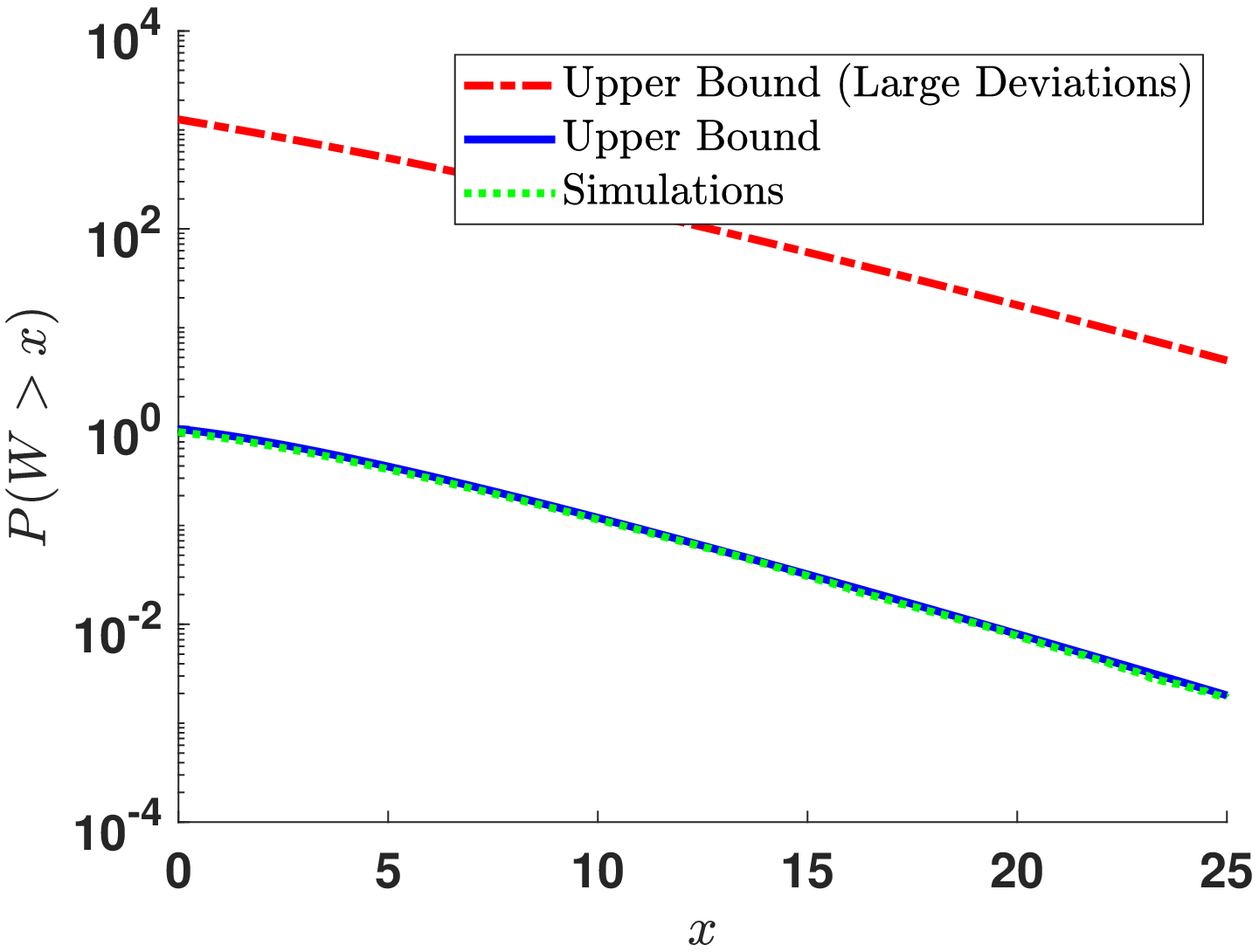}
\\
{\footnotesize (b) $\rho=0.75$}}
\shortstack{\hspace{0cm}
\includegraphics[width=0.323\linewidth,keepaspectratio]{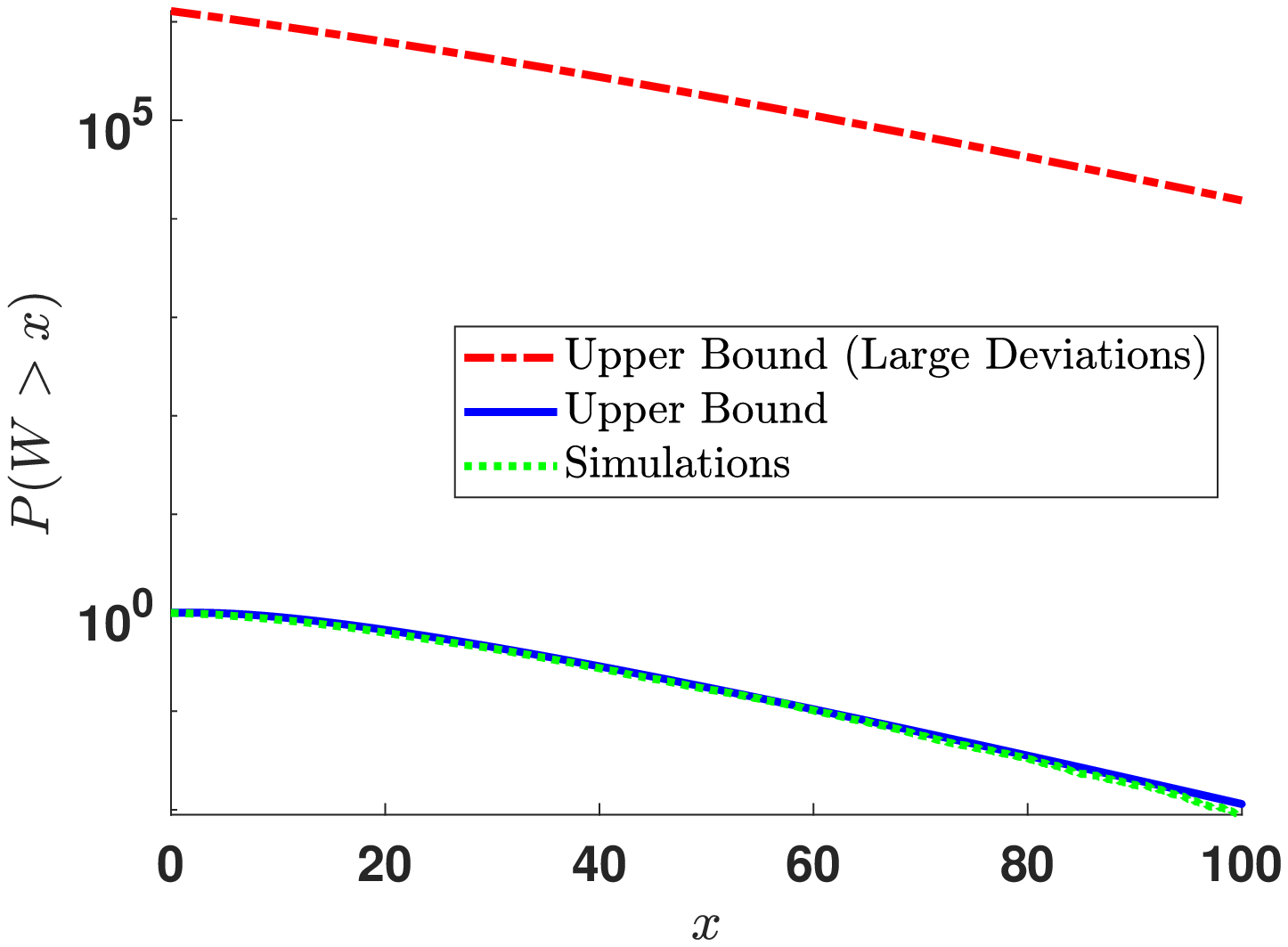}
\\
{\footnotesize (c) $\rho=0.95$} }

 \caption{The waiting time CCDF $\P(W>x)$ for the $Erlang(2)/M/1\rightarrow \cdot/M/1$ tandem: Upper Bounds (from \eqref{eq:dm2_boundcase1}-\eqref{eq:dm2_boundcase2}) vs. Upper Bounds (from \eqref{eq:dm2_ld}, based on large deviations) vs. Simulations} \label{fig:E2M2}
\end{figure}

Fig.~\ref{fig:DM2} displays the results for the $D/M/1\rightarrow \cdot/M/1$ tandem, for several values of the utilization factor $\rho$; the service rate is $\mu=1$ and the inter-arrival times are $X=\frac{1}{\rho\mu}$. The (extremely) poor accuracy of the large-deviations-based bounds is particularly pronounced at high utilizations, despite the $D/M$ setting; this is due not only to the nested `$\max$', and consequently the nested infinite application of the Union Bound, but also to the more limited choice in running the optimization `$\inf_{\{0<\theta<\mu:\beta<1\}}$', as opposed to scenarios with lower $\rho$. In turn, in addition to the overall accuracy of the bounds from \eqref{eq:dm2_boundcase1}-\eqref{eq:dm2_boundcase2}, we highlight their ability to capture the initial concave `bend' which is clearly visible in (c) around small $x$; this feature is precisely due to the poly-exp structure of the bounds. We note that exponential-only bounds would entirely miss the bend, as they would form a single straight-line on the displayed log-log scale; this is the case for martingale-based bounds developed for single queues (see, e.g.,~\cite{PoCi14}).

Fig.~\ref{fig:E2M2} illustrates results and reveals similar observations about the $Erlang(2)/M/1\rightarrow \cdot/M/1$ tandem, as a special case with Gamma distributed input.

\section{Extensions and Open Problems}\label{sec:extensions}
Here we briefly outline several (practically) important extensions of the key integral representation from \eqref{equationpsi} along with the associated main results.

\subsection{Non-Renewal Arrivals}
The extension to non-renewal arrivals (i.e., non-`$GI$') is immediate. We only present the analysis for an alternate renewal (AR) arrival process, which is sufficient to convey that the further extension to Semi-Markovian arrivals would only additionally involve keeping track of the states of an underlying Markov chain.

Let $X_k$ with $k\in N$ be an AR process driven by two random variables $X^{(1)}$ and $X^{(2)}$, with potentially different distributions, and a Bernoulli r.v. $B$ taking two values $1$ and $2$ with equal probabilities. If $B=1$ then $X_{2k}:=X^{(1)}_k$ and $X_{2k+1}:=X^{(2)}_{k}$; otherwise, if $B=2$, then $X_{2k}:=X^{(2)}_k$ and $X_{2k+1}:=X^{(1)}_{k}$ for $k\geq0$, where $(X^{(1)}_1, X^{(2)}_1)$, $(X^{(1)}_2, X^{(2)}_2),   \ldots $ are i.i.d. copies of $(X^{(1)}, X^{(2)})$. Equivalently, $X_{2k}:=X^{(B)}_k$ and $X_{2k+1}:=X^{(3-B)}_{k}$, i.e., the realization of $B$ determines the initial distribution of $X_0$, whereas those of $X_k$ for $k\geq 1$ alternate.

The solution to the underlying integral equation along with the connection to $\P(W>x)$ are given in~Appendix~\ref{app:AR} for the $M=2$ case, i.e., $AR/G/1\rightarrow\cdot/G/1$; while no result is presented about the existence of poly-exp bounds, we believe the extensions of Theorems~\ref{th:existence2} and~\ref{th:existenceM} to also be immediate.

\subsection{Extension to Packet Networks}
Consider the tandem network from Fig.~\ref{fig:shapem} as a \textit{packet network}. The fundamental difference is that the service time assigned to job (packet) $n$ at the first queue remains the same at all the downstream queues - because packets' sizes are generally not altered by networks. The ``classical" approach to analyze a packet network is based on the so-called ``Kleinrock Independence Assumption" which requires packets' sizes to be independently regenerated at each queue (e.g., the model from Fig.~\ref{fig:shapem}). The original reasoning for this assumption was to fit the packet network analysis within the Jackson networks framework.

While drastically simplifying the analysis, the ``Kleinrock Independence Assumption" lends itself to an incorrect understanding of scaling laws of sojourn times in networks. Concretely, in the case of Poisson arrivals and exponential service times, sojourn times scale super-linearly in the number of queues, more precisely as $\Theta(M\log{M})$ (Vinogradov~\cite{Vinogradov86}). However, once enforcing the ``Kleinrock Independence Assumption", sojourn times only scale linearly as $\Theta(M)$ (as sums of independent exponential random variables).

	\begin{figure}[h]
		\centering
		\includegraphics[width=0.7\linewidth]{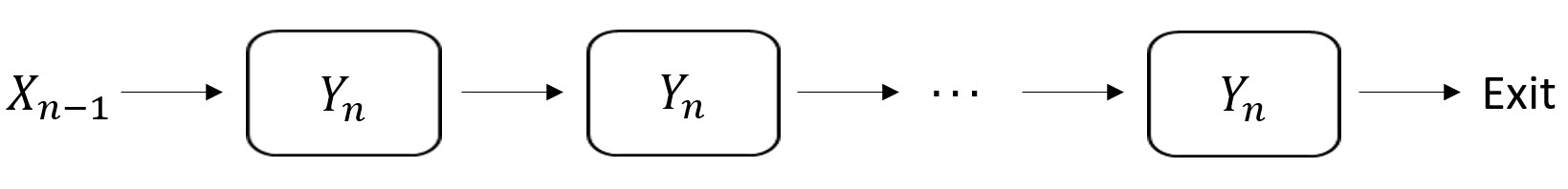}
		\caption{Inter-arrival and (identical) service times for job $n$ in a packet (tandem) network of $M$ queues}
		\label{fig:kleinrock}
	\end{figure}

As illustrated in Fig.~\ref{fig:kleinrock}, the service time of job $n$ is denoted by $Y_n$, i.e., $Y^{(n)}_{i+n-1}=Y_n$ at all the queues $i\in\{1,\dots,M\}$ in the model from Fig.~\ref{fig:shapem}. The exit time of job $n$ can thus be expressed as
\[\begin{split}&\max_{0\leq i_1< \ldots< i_{m}\leq n+m-1}X_0+\cdots+X_{i_1-1}+Y_{i_1}+\cdots+Y_{i_2-1}+Y_{i_2-1}+\cdots+Y_{i_3-2}\\&\qquad\qquad\qquad\qquad+\cdots+Y_{i_{m}-m+1}+\cdots+Y_{n}\\&\qquad=\max_{0\leq i\leq n}\{X_0+\cdots+X_{i-1}+Y_{i}+\cdots+Y_{n}+(M-1)\max\{Y_{i},\ldots, Y_{n}\}\}~.\end{split}\]
The term $(M-1)\max\{Y_{i},\ldots, Y_{n}\}$ attains the maximum over all possible sums, with repetition, from the set $\{Y_{i},\ldots, Y_{n}\}$; there are $n-i+M-1\choose M-1$ such sums.

By changing the sub-indexes $k$ to $n+1-k$ the exit time can be more conveniently written as
\[\begin{split}\tau=\max_{1\leq i\leq  n+1}\{X_{n+1}+\cdots+X_{i+1}+Y_{i}+\cdots+Y_{1}+(M-1)\max\{Y_{i},\ldots, Y_{1}\}\}\end{split}~.\]
Again, the service time of job $n$ becomes $Y_1$, that of job $n-1$ becomes $Y_2$, and so on. It then follows that the waiting time of job $n$ has the same distribution as
\begin{eqnarray}
	W=\tau-\left(MY_1+X_{2}+\cdots +X_{n+1}\right)=\max\left\{{T}^{1}_{2,n+1},{T}^2_{2,n+1}-(M-1)Y_1\right\}\vee 0~,\label{eq:WKleinrock}
\end{eqnarray}
where, with abuse of notation,
\[{T}^1_{k,l}:=\max_{k\leq i\leq l}\left\{(Y_k-X_k)+\ldots +(Y_i-X_i)\right\}\]
\[{T}^2_{k,l}:=\max_{k\leq i\leq l}\left\{(Y_k-X_k)+\ldots +(Y_i-X_i)+(M-1)\max\{Y_k,\ldots , Y_i\}\right\}~.\]

The solution to the underlying integral equation along with the connection to $\P(W>x)$ are given in~Appendix~\ref{app:Kleinrock} for a tandem of $M$ queues. The results have the potential to lend themselves to explicit results for a wide range of arrival processes, especially when combined with an extension to Semi-Markovian arrivals as mentioned earlier; note that unlike in the  general tandem scenario from~\ref{sec:gc}, the expression of $W$ from~\eqref{eq:WKleinrock} is drastically simpler. The state-of-the-art (Le Gall~\cite{LeGall94}) in tandem (packet) networks includes the exact analysis of $\P(D>x)$, in the case of renewal arrivals and general service times, in terms of an integral in the complex domain; explicit solutions (in the real domain) are available in a few special cases including Poisson or deterministic arrivals, and exponential service times; earlier results (e.g., Boxma~\cite{Boxma79} and Vinogradov~\cite{Vinogradov86}) are restricted to Poisson arrivals only.

We lastly discuss some open problems and another potential extension. The key open problem is whether the integral representation from \eqref{equationpsi} admits an exact (and explicit) solution, which would immediately enable an exact and explicit solution for $\P(W>x)$. Progress in this direction may be achieved by firstly investigating the matching lower bounds in Theorem~\ref{th:existence2}, which are likely to also have a poly-exp structure. The provided explicit analysis for $D/M/1\rightarrow \cdot/M/1$ and $Gamma/M/1\rightarrow \cdot/M/1$ is clearly very tedious, despite using a sub-optimal yet numerically effective matching as in Lemma~\ref{cor:gg1gg1exp}; more convenient representations/simplifications along with numerical algorithms may circumvent the tediousness issue, especially in the general $M$ case. While one of the presented extension involves non-renewal arrivals, another immediate one would involve integrating the presented methodology with network calculus concepts. The merge could in principle enable the analysis of tandem networks with scheduling, and in particular the end-to-end waiting/sojourn time of a flow competing at each queue/server with other flows according to some scheduling policy. From a technical point of view, the extension would require replacing the sequences $Y^{(i)}$ with bivariate `service curve processes' or `dynamic F-servers' (in the terminology from Chang~\cite{Book-Chang}, p. 178), which would encode the service received by the competing flow at each server.

\section{Conclusions}
This paper continues relatively recent efforts~\cite{Ross74,Duffield94,Palmowski96,Asmussen03,CiPo18} to improve large-deviations-type of queueing bounds available for broad arrival processes but proverbially very loose. Unlike prior work dedicated to single queues, this paper targets tandem networks of queues which pose extraordinary technical difficulties.

The breakthrough is a new integral equation which suitably encodes the analytical structure of the end-to-end waiting time. Along with an existence result revealing the poly-exp structure of the solution for the  integral equation, (mixture of) poly-exp bounds on $\P(W>x)$ follow by a matching process. The two presented examples are obviously very tedious, even by using a sub-optimal but convenient matching for the polynomials' parameters. They do reveal however the ability of our approach to render sharp bounds not only in heavy traffic, as prior work achieves as well for single queues using variations of Kingman's $GI/G/1$ bound, but also at lower utilizations.

As demonstrated through the two extensions to non-renewal arrivals and packet networks, our approach is seemingly robust and may be applicable to other queueing networks besides tandems.

\bibliographystyle{abbrv}
\bibliography{../../bibliography/stat}

\newcommand{\noopsort}[1]{}\providecommand{\noopsort}[1]{}
\begin{thebibliography}{10}

\bibitem{Asmussen03}
S.~Asmussen.
\newblock {\em Applied Probability and Queues}.
\newblock Springer, 2003.

\bibitem{Ayhan01}
H.~Ayhan and F.~Baccelli.
\newblock Expansions for joint laplace transform of stationary waiting times in
  (max, +)-linear systems with poisson input.
\newblock {\em Queueing Syst. Theory Appl.}, 37(1/3):291--328, 2001.

\bibitem{Ayhan02}
H.~Ayhan and D.-W. Seo.
\newblock Tail probability of transient and stationary waiting times in
  (max,+)-linear systems.
\newblock {\em IEEE Transactions on Automatic Control}, 47(1):151--157, 2002.

\bibitem{Borden98}
R.~S. Borden.
\newblock {\em A course in advanced calculus}.
\newblock Dover Publications, 1998.

\bibitem{Boxma79}
O.~Boxma.
\newblock On a tandem queueing model with identical service times at both
  counters. part 1,2.
\newblock {\em Advances in Applied Probability}, 11(3):616--659, 1979.

\bibitem{Boxma90}
O.~Boxma and H.~Daduna.
\newblock Sojourn times in queueing networks.
\newblock In {\em Stochastic Analysis of Computer and Communication Systems
  (Editor H. Takagi)}, pages 401--450. North-Holland Publishing Company, 1990.

\bibitem{BuLiCi11}
A.~Burchard, J.~Liebeherr, and F.~Ciucu.
\newblock On superlinear scaling of network delays.
\newblock {\em {IEEE/ACM} Transactions on Networking}, 19(4):1043--1056, Aug.
  2011.

\bibitem{Burke64}
P.~J. Burke.
\newblock {The Dependence of Delays in Tandem Queues}.
\newblock {\em The Annals of Mathematical Statistics}, 35(2):874 -- 875, 1964.

\bibitem{Book-Chang}
C.-S. Chang.
\newblock {\em Performance Guarantees in Communication Networks}.
\newblock Springer Verlag, 2000.

\bibitem{choudhury96squeezing}
G.~L. Choudhury, D.~M. Lucantoni, and W.~Whitt.
\newblock Squeezing the most out of \text{ATM}.
\newblock {\em IEEE Transactions on Communications}, 44(2):203--217, Feb. 1996.

\bibitem{CiBuLi05}
F.~Ciucu, A.~Burchard, and J.~Liebeherr.
\newblock A network service curve approach for the stochastic analysis of
  networks.
\newblock In {\em {ACM} Sigmetrics}, pages 279--290, 2005.

\bibitem{CiPo18}
F.~Ciucu and F.~Poloczek.
\newblock Two extensions of {Kingman's GI/G/1} bound.
\newblock {\em Proc. of the ACM on Measurement and Analysis of Computing
  Systems - ACM Sigmetrics / IFIP Performance}, 2(3):43:1--43:33, Dec. 2018.

\bibitem{cohen1982}
J.~W. Cohen.
\newblock {\em The Single Server Queue (2nd Edition)}.
\newblock Elsevier Science, 1982.

\bibitem{Disney85}
R.~L. Disney and D.~K\"{o}nig.
\newblock Queueing networks: A survey of their random processes.
\newblock {\em SIAM Review}, 27(3):335--403, 1985.

\bibitem{Duffield94}
N.~G. Duffield.
\newblock Exponential bounds for queues with {Markovian} arrivals.
\newblock {\em Queueing Systems}, 17(3-4):413--430, Sept. 1994.

\bibitem{Fidler06}
M.~Fidler.
\newblock An end-to-end probabilistic network calculus with moment generating
  functions.
\newblock In {\em IEEE International Workshop on Quality of Service (IWQoS)},
  pages 261--270, 2006.

\bibitem{Foss07}
S.~G. Foss.
\newblock On the exact asymptotics for the stationary sojourn time distribution
  in a tandem of queues with light-tailed service times.
\newblock {\em Probl. Inf. Transm.}, 43(4):353--366, Dec. 2007.

\bibitem{LeGall94}
P.~L. Gall.
\newblock {The overall sojourn time in tandem queues with identical successive
  service times and renewal input}.
\newblock {\em Stochastic Processes and Their Applications}, 52(1):165--178,
  Aug. 1994.

\bibitem{Ganesh98}
A.~J. Ganesh.
\newblock Large deviations of the sojourn time for queues in series.
\newblock {\em Annals of Operations Research}, 79:3--26, 1998.

\bibitem{Harrison04}
P.~Harrison and W.~Knottenbelt.
\newblock Quantiles of sojourn times.
\newblock In {\em Computer System Performance Modeling in Perspective (Editor
  E. Gelenbe)}, pages 155--193. Imperial College Press, 2004.

\bibitem{Jiang08}
Y.~Jiang and Y.~Liu.
\newblock {\em Stochastic Network Calculus}.
\newblock Springer, 2008.

\bibitem{Karpelevitch92}
F.~I. Karpelevitch and A.~Y. Kreinin.
\newblock Joint distributions in poissonian tandem queues.
\newblock {\em Queueing Syst. Theory Appl.}, 12(3-4):273--286, 1992.

\bibitem{Kingman64}
J.~F.~C. Kingman.
\newblock A martingale inequality in the theory of queues.
\newblock {\em Cambridge Philosophical Society}, 60(2):359--361, Apr. 1964.

\bibitem{Kingman70}
J.~F.~C. Kingman.
\newblock Inequalities in the theory of queues.
\newblock {\em Journal of the Royal Statistical Society, Series B},
  32(1):102--110, 1970.

\bibitem{Kraemer73}
W.~Kr\"{a}mer.
\newblock Total waiting time distribution function and the fate of a customer
  in a system with two queues in series.
\newblock In {\em International Teletraffic Congress (ITC 7)}, 1973.

\bibitem{LiBuCi12}
J.~Liebeherr, A.~Burchard, and F.~Ciucu.
\newblock Delay bounds in communication networks with heavy-tailed and
  self-similar traffic.
\newblock {\em {IEEE} Transactions on Information Theory}, 58(2):1010--1024,
  Feb. 2012.

\bibitem{HLiu03}
H.~Liu.
\newblock {\em Buffer Size and Packet Loss in Tandem Queueing Network}.
\newblock PhD thesis, University of California at San Diego, San Diego, CA,
  1993.

\bibitem{Loynes64}
R.~M. Loynes.
\newblock The stability of a system of queues in series.
\newblock {\em Mathematical Proc. of the Cambridge Philosophical Society},
  60(3):569--574, July 1964.

\bibitem{Palmowski96}
Z.~Palmowski and T.~Rolski.
\newblock A note on martingale inequalities for fluid models.
\newblock {\em Statistics \& Probability Letters}, 31(1):13--21, Dec. 1996.

\bibitem{PoCi14}
F.~Poloczek and F.~Ciucu.
\newblock Scheduling analysis with martingales.
\newblock {\em Performance Evaluation (Special Issue: IFIP Performance 2014)},
  79:56 -- 72, Sept. 2014.

\bibitem{Reich63}
E.~Reich.
\newblock Note on queues in tandem.
\newblock {\em The Annals of Mathematical Statistics}, 34(1):338--341, 1963.

\bibitem{Ross74}
S.~M. Ross.
\newblock Bounds on the delay distribution in {GI/G/1} queues.
\newblock {\em Journal of Applied Probability}, 11(2):417--421, June 1974.

\bibitem{tala96b}
M.~Talagrand.
\newblock {Majorizing measures: The generic chaining}.
\newblock {\em Annals of Probability}, 24(3):1049--1103, July 1996.

\bibitem{Vinogradov86}
O.~P. {Vinogradov}.
\newblock {A multiphase system with identical service}.
\newblock {\em Soviet Journal of Computer and Systems Sciences.}, 24(2):28--31,
  Mar. 1986.

\bibitem{Walrand80}
J.~Walrand and P.~Varaiya.
\newblock Sojourn times and the overtaking condition in jacksonian networks.
\newblock {\em Advances in Applied Probability}, 12(4):1000--1018, 1980.

\bibitem{Walrand89}
J.~C. Walrand.
\newblock {\em An introduction to queueing networks}.
\newblock Prentice Hall International editions. Prentice Hall, 1989.

\end{thebibliography}

\appendices

\section{A Very-Light Distribution}\label{app:vld}
	Let a random variable $Y$ with density $f_Y(x)=\alpha\1_{x\geq 0}e^{-\mu x}/(1+x^2)$, for a suitable value of $\alpha$, and a random variable $X$ independent of $Y$ and satisfying $X>\E[Y]\vee (\ln (\alpha\pi/2)/\mu) $ a.s. Then
	\[\theta^+:=\sup\{r\in \R: \E[e^{r(Y-X)}]<\infty\}=\mu\]
	and
	\[\E[e^{\theta^+(Y-X)}]=\E[e^{\mu(Y-X)}]=\alpha\E[e^{-\mu X}]\int_0^\infty\frac{1}{1+x^2}dx=\alpha \pi\E[e^{-\mu X}]/2<1~.\]

\section{Semi-Continuous Functions}\label{app:sc}
In our proofs we need two results from analysis. Let $\Dcal$ be a compact subset of $\Rbar^M$, $\phi:\Dcal\to \Rbar$ be an arbitrary function, and define $f:\Dcal\to \Rbar$
	 \[f(x):=\limsup_{y\to x}\phi(y)~\forall x\in \Dcal~.\]

\noindent\textbf{1.}\hspace{0.33cm}The first result is that $f$ is an upper semi-continuous function, i.e., for every convergent sequence $x_n\to x$ in $\Dcal$,
	\[f(x)\geq \limsup_{x_n\to x}f(x_n)~.\]

Indeed, from the definition of $f(x_n)$, there exists  $y_n\in \Dcal$  such that $\lvert y_n-x_n\rvert<n^{-1}$ and $\phi(y_n)\geq f(x_n)-n^{-1}$. Since $x_n\to x$, we get
 $\lvert y_n- x\rvert \leq \lvert y_n-x_n\rvert+\lvert x_n-x\rvert\to 0$ as $n\to \infty$, and hence $y_n\to x$ as well. From $\phi(y_n)\geq f(x_n)-n^{-1}$ it then follows that
 \[f(x)=\limsup_{y\to x}\phi(y)\geq \limsup_{n\to \infty}\phi(y_n)\geq \limsup_{n\to \infty }  f(x_n)~,\]
thus proving that $f$ is upper semi-continuous.

\noindent\textbf{2.}\hspace{0.33cm}The second result is that any upper semi-continuous function on compact domain attains its maximum, according to Weierstrass Maximum Theorem (Borden~\cite{Borden98}, p. 40). In particular, $f$ attains its maximum, i.e., if $K:=\sup_{x\in \Dcal} f(x)$ then
	\[\Kcal:=\{x\in \Dcal: f(x)=K \}\]
	is non-empty and closed. Furthermore,
	\[a_1:=\inf\{x_1\in \Rbar: \exists (x_2,\ldots, x_M)\in \Rbar^{M-1}: (x_1,\ldots, x_M)\in \Kcal\}\]
	\[a_j:=\inf\{x_j\in \Rbar: \exists  (x_j, \ldots, x_M)\in \Rbar^{M-j+1}: (a_1,\ldots,a_{j-1}, x_j, \ldots, x_M)\in \Kcal\}\]
	are well defined and $a:=(a_1,\ldots, a_M)\in \Kcal$.
	Moreover, if $K>0$, then there exists a sequence $x_n\to a$ such that $\phi(x_n)>0$ and $x_n\in \supp(\phi\vee 0)$.

Note the standard notation for a function's support, i.e.,
	\[\supp(\phi):=\{x\in \Dcal: \phi(x)\neq 0\}~\textrm{and}~\supp(\phi\vee 0):=\{x\in \Dcal: \phi(x)>0\}~.\]

\section{Proof of part (c) of Theorem~\ref{th:psigammaeta}}\label{app:3rdpart}
We need to show that  $\psi(u,v)\leq \eta(u,v)$ on $v\geq u$.  Let  $\Dcal:=\{(u,v)\in \Rbar^2, v\geq u\}$ and for $(u,v)\in \Dcal$ define
\[f(u,v):=\limsup_{(x,y)\to (u,v)}(\psi(x,y)-\eta(x,y))~.\]
As in part (b), $f$ is upper-semi continuous and takes its maximum on the compact set $\Dcal$, i.e.,
\[K:=\max_{(u,v)\in \Dcal}f(u,v)~.\]
If $K\leq 0$ the proof is complete. Otherwise, assume that $K>0$ and define
\[\Kcal:=\{(u,v)\in \Dcal: f(u,v)=K\} \]
which is closed subset of $\Rbar^2$, and
\[a:=\min\{u\in \Rbar: \exists v\in \Rbar :(u,v)\in \Kcal \}\]
\[b:=\min\{v\in \Rbar: (a,v)\in \Kcal \}~.\]
Since $f(a,b)=K>0$, there exists a sequence   $(a_n,b_n)\in {\supp (\psi)}\cap \Dcal$, such that $(a_n,b_n)\to (a,b)$ and
 \begin{align*}K&=f(a,b)=\lim_{n\to \infty }(\psi-\eta)(a_n,b_n)\\&\leq \limsup_{n\to \infty}\E\left[\1_{\{a_n\geq  U\}}(\psi-\eta)\left((a_n-U)\wedge (b_n-V), b_n-V\right)\right]\intertext{Since $\psi-\eta\leq 1$, we can apply Fatou's lemma}&\leq \E\left[\limsup_{n\to \infty}\1_{\{a_n\geq  U\}}(\psi-\eta)\left((a_n-U)\wedge (b_n-V), b_n-V\right)\right]\\&\leq K\cdot\P(a=U) +\E\left[\1_{\{a> U\}}f\left((a-U)\wedge (b-V), b-V\right)\right]\\&\leq K\cdot\P(a\geq  U)~,\end{align*}
using the definitions of $K$ and $f$. It then follows that $\P(a\geq U)=1$ such that necessarily
\begin{equation} \label{abinfeta}
	f\left((a-U)\wedge (b-V), b-V\right)=K~\textrm{a.s.}
\end{equation}
Next we claim that $(a,b)=(\infty,\infty)$. Otherwise, if $a<\infty$  then
\eqref{abinfeta} and   $\P(U>  0)>0$ contradict with the choice of $a$ and hence $a=\infty$. Similarly, if $b<\infty$ then \eqref{abinfeta} and $\P(V>0)>0$ contradict with the choice of $b$ and hence $b=\infty$ too.
Finally, \[K=	f(\infty, \infty)=\limsup_{u,v\to \infty}(\psi-\eta)(u,v)=0\]
from the limiting conditions on $\psi$ and $\eta$, and hence $\psi\leq \eta $ on $v\geq u$.

\section{Proofs from \ref{sec:gc}}\label{app:proofsgc}

\begin{proof}{~[of Theorem~\ref{th:psigammaetaM}]} 
 For (a) we can write
		\begin{align*}
			&\psi_M(v_1,\ldots,v_M)=\P(T^{1}_{1}\leq v_1, \ldots, T^{M}_{1}\leq v_M)
			\\&=\P\left( V^{(1)}_1 \leq v_1, V^{(1)}_1+T^1_2\leq v_1, V^{(2)}_1+T^1_2\vee T^2_2\leq v_2,\ldots,  V^{(M)}_1+T^{M-1}_2\vee T^M_2\leq v_M\right)\\&=\P\left( V^{(1)}_1 \leq v_1, T^1_2\leq \bigwedge_{1\leq i\leq 2}\left(v_i-V^{(i)}_1\right), \ldots , T^{M-1}_2\leq \bigwedge_{M-1\leq i\leq M}\left(v_i-V^{(i)}_1\right), T^M_2\leq v_M-V^{(M)}_1  \right)\\
			&= \E\left[\1_{\{v_1\geq  V^{(1)}\}}\psi_M\left(\bigwedge_{1\leq i\leq 2}\left(v_i-V^{(i)}\right),  \ldots,\bigwedge_{M-1\leq i\leq M}\left(v_i-V^{(i)}\right), v_M-V^{(M)} \right)\right]\\&=\E\left[\1_{\{v_1\geq  V^{(1)}\}}\psi_M\left(\Vcal_M(v_1,\ldots, v_M) \right)\right].
		\end{align*}
		For (b) and (c) denote
		\[\phi_1:=\gamma_M-\psi_M, \quad \phi_2:=\psi_M-\eta_M\]
		and
		\[f_i(v_1,\ldots,v_M):=\limsup_{(u_1,\ldots,u_M)\to (v_1,\ldots, v_M)}\phi_i(u_1,\ldots,u_M)~,\]
		which are upper semi-continuous on $\Rbar^M$ and attain their maximum
		\[K_i:=\max_{(v_1,\ldots, v_M)\in \Rbar^M}f_i(v_1,\ldots, v_M), \qquad i=1,2.\]
		Since $f_i(\infty,\ldots,\infty)=0$ it follows that $K_i\geq 0$. Let
		\[\mathcal{K}_i:=\{(v_1,\ldots,v_M)\in \Rbar^M: f_i(v_1,\ldots, v_M)=K_i\}\]
		and define
		\[a_1^{(i)}:=\min\{v_1\in \Rbar: \exists (v_2,\ldots,v_M)\in \Rbar ^{M-1}: (v_1,\ldots,v_M)\in  \Kcal_i\}\]
		\[a_j^{(i)}:=\min\{v_j\in \Rbar: \exists (v_{j+1},\ldots ,v_M)\in \Rbar^{M-j} ,  (a_1^{(i)},\ldots, a_{j-1}^{(i)},v_j,\ldots,v_M)\in \Kcal_i\}~,\]
		which are well-defined since $f_i$ is upper semi-continuous; also, \[(a_1^{(1)},\ldots,a_M^{(1)})\in \mathcal{K}_1\subseteq \supp(\gamma_M \vee 0), \quad (a_1^{(2)},\ldots,a_M^{(2)})\in \mathcal{K}_2\subseteq \supp(\psi_M).\] We can now write
		\[ \begin{aligned}K_i&=f_i(a_1^{(i)},\ldots,a_M^{(i)})\\&=\limsup_{(v_1,\ldots, v_M)\to (a_1^{(i)},\ldots,a_M^{(i)})}\phi_i(v_1,\ldots, v_M)\\&=\limsup_{(v_1,\ldots, v_M)\to (a_1^{(i)},\ldots,a_M^{(i)})} \E\left[\1_{\{v_1\geq V^{(1)}\}}\phi_i(\Vcal_M(v_1,\ldots, v_M))\right]\\&\leq  \E\left[\limsup_{(v_1,\ldots, v_M)\to (a_1^{(i)},\ldots,a_M^{(i)})} \1_{\{v_1\geq V^{(1)}\}}\phi_i(\Vcal_M(v_1,\ldots, v_M))\right]\\&\leq K_i\cdot \P(a_1^{(i)}= V^{(1)})+ \E\left[ \1_{\{a_1^{(i)}> V^{(1)}\}}f_i(\Vcal_M(a_1^{(i)},\ldots, a_M^{(i)}))\right]\\&\leq K_i\cdot \P(a_1^{(i)}\geq  V^{(1)})\end{aligned}\]
		Hence $\P(a_1^{(i)}\geq  V^{(1)})=1$ and
		\begin{equation} \label{a_inf}
			\begin{split}
				&f_i(\Vcal_M(a_1^{(i)},\ldots, a_M^{(i)}))\\&=f_i\left(\bigwedge_{1\leq j\leq 2}\left(a_j^{(i)}-V^{(j)}\right),  \ldots,\bigwedge_{M-1\leq j\leq M}\left(a_j^{(i)}-V^{(j)}\right), a_M^{(i)}-V^{(M)} \right)=K_i~\textrm{a.s.}
			\end{split}
		\end{equation}
		We now prove by contradiction that $(a_1^{(i)},\ldots,a_M^{(i)})=(\infty,\ldots,\infty)$. Letting  \[j_0: =\min\{j\in\{1,2,\ldots,m\}, a_j^{(i)}<\infty\}\]
		it then follows from \eqref{a_inf} and the choice of $a_{j_0}$ that $\P(V^{(j_0)}>  0)=0 $, thus contradicting the assumption that $\P(V^{(j_0)}>  0)>0 $.
		Therefore \[K_i=	f_i(\infty,\ldots,\infty)=0~,\]
		and hence $\psi_M\leq \gamma_M$ and $\eta_M\leq \psi_M$.
		
		We can now prove the uniqueness of $\psi$ solving for (\ref{psimnode}). Let $\psi_1$ and $\psi_2$ be two bounded solutions satisfying $\psi_i(\infty,\ldots, \infty)=\lim_{v_1,\dots,v_M\to \infty}\psi_i(v_1,\dots,v_M)=1$. Applying the second part of the theorem with $\psi=\psi_i$ and $\gamma=\psi_{3-i}$ (note that the proof only needs that $\psi$ satisfies (\ref{psimnode}), is bounded, and $\psi(\infty,\ldots, \infty)=\lim_{v_1,\dots,v_M\to \infty}\psi(v_1,\dots,v_M)=1$) we obtain that $\psi_i\geq \psi_{3-i}$ for $i=1,2$, and hence $\psi_1=\psi_2$.

			\end{proof}

\begin{proof}{~[of Corollary~\ref{cor:WgammaetaM}]}
			We have for $x\geq 0$
			\[\begin{split}\P(W> x)&=\P\left(\max_{1\leq j\leq M}\{T^j_{M+1}+S_j\}> x+\sum_{k=1}^M Y^{M+1-k}_k\right)\\&=1-\P\left(\max_{1\leq j\leq M}\{T^j_{M+1}+S_j\}\leq x+\sum_{k=1}^M Y^{M+1-k}_k\right)\\&=1-\P\left(\forall {1\leq j\leq M}, T^j_{M+1}\leq x+\sum_{k=1}^M Y^{M+1-k}_k-S_j\right)\\&=1-\E\left[\psi_M\left(x+\sum_{k=1}^M Y^{M+1-k}_k-S_1, \ldots,x+\sum_{k=1}^M Y^{M+1-k}_k-S_M \right)\right]\end{split}~,\]
from the stationarity of $T_k^j$ for all $k\in\N$.
\end{proof}

	\begin{proof}{~[of Theorem~\ref{th:existenceM}]} As in the case $M=2$ (Theorem~\ref{th:existence2}) we proceed in two steps.

		\textit{Step 1: } First we prove by induction on $i\geq1$ that there exist the polynomials $Q_{i}:\R\to \R, i\in\{1,2,\ldots,m\}$ with degrees at most
		$	d_{i}(\theta_i)$, respectively, having non-negative values on $[0,\infty) $, such that for all $ v\geq -a_+$
		\begin{equation}\label{Q1psi}
			Q_1e^{-\theta_1 v}\geq \E\left[\1_{\{v\geq V^{(1)}\}}Q_1e^{\theta_1(V^{(1)}-v)}\right]+\P(v< V^{(1)})
		\end{equation}
		and for all $2\leq i\leq M$ and $v\geq 0$
		\begin{equation}\label{Qmpsi}
			Q_{i}(v)e^{-\theta_i v+\theta_i a}\geq \E\left[\1_{\{v\geq V^{(i)}\}}\sum_{l=i-1}^{i}Q_{l}\left(v-V^{(i)}\right)e^{\theta_l(V^{(i)}-v+a)}\right]+\P(v<V^{(i)})~.
		\end{equation}

		\textit{Proof of Step 1: }
		The case $i=1$ follows immediately as in the proof of Theorem~\ref{th:existence2} by letting
		\[Q_{1}:=\left(\inf_{v\geq -a_+}\E\left[e^{\theta_1 (V^{(1)}-v)}\mid V^{(1)}>v\right]\right)^{-1}~.\]
For some $k\geq2$ we next assume the existence of the polynomials $Q_i$ for $i\leq k-1$. We need to prove that there exists $Q_k(v):=\sum_{j=0}^{d_{k}(\theta_k)} A_{j}  v^{j}$ such that \eqref{Qmpsi}  holds for $i=k$.
		It is thus sufficient to show that there exists the non-negative constants $A_{d_{k}(\theta_k)}, \ldots , A_0$ such that
		\begin{equation}\label{Al-psi-A0M}
			\begin{split}
				&\sum_{j=0}^{d_{k}(\theta_k)} A_{j}e^{\theta_k a} \left\lbrace v^{j}e^{-\theta_k v}-\E\left[\1_{\{v\geq V^{(k)}\}}\left(v-V^{(k)}\right)^{j}e^{\theta_k (V^{(k)}-v)} \right]\right\rbrace \\&\qquad\geq \E\left[\1_{\{v\geq V^{(k)}\}}Q_{k-1}\left(v-V^{(k)}\right)e^{\theta_{k-1} (V^{(k)}-v+a)}\right]+\P(v<V^{(k)})~.
			\end{split}
		\end{equation}
Next we bound both side and then show the existence of the $A_j$'s for the tighter inequality. An upper bound on the right side is
		\begin{align*}
			&\E\left[\1_{\{v\geq V^{(k)}\}}Q_{k-1}\left(v-V^{(k)}\right)e^{\theta_{k-1} (V^{(k)}-v+a)}\right]+\P(v<V^{(k)})\\&\leq e^{\theta_{k-1}a} \E\left[\1_{\{v\geq V^{(k)}\}}Q_{k-1}\left(v-V^{(k)}\right)e^{\zeta_k (V^{(k)}-v)}\right]+\P(v<V^{(k)})\\&=  e^{\theta_{k-1}a}\E\left[Q_{k-1}\left(v-V^{(k)}\right)e^{\zeta_k (V^{(k)}-v)}\right]+\P(v<V^{(k)})\\&\quad - e^{\theta_{k-1}a}\E\left[Q_{k-1}\left(v-V^{(k)}\right)e^{\zeta_k (V^{(k)}-v)}\mid  V^{(k)}>v\right]\P(V^{(k)}> v)\\&\leq R_{d_{k-1}(\theta_{k-1})}(v)e^{-\zeta_k v} +\left(1-\inf_{v\geq 0} e^{\theta_{k-1}a}\E\left[Q_{k-1}\left(v-V^{(k)}\right)e^{\zeta_k (V^{(k)}-v)}\mid V^{(k)}>v\right]\right)\P(V^{(k)}>v)\\&\leq R_{d_{k-1}(\theta_{k-1})}(v)e^{-\zeta_k v}+\left(1+C \sum_{l=0}^{\left\lfloor (d_{k-1}(\theta_{k-1})-1)/2\right\rfloor}K_k^{2l+1}\right)\P(V^{(k)}>v)~,
		\end{align*}
		where $R_{d_{k-1}(\theta_{k-1})}$ is a polynomial of degree at most $d_{k-1}(\theta_{k-1})$ and $C$ is a positive constant. In the last inequality we used the induction hypothesis on $Q_{k-1}$ and only bounded the odd powers of $(v-V^{(k)})$ using the assumption on the conditional expectations; the other terms are positive.
		
Next we lower bound the terms in brackets from the left side of \eqref{Al-psi-A0M}. For $j\geq 1$
		\begin{align*}
			&v^je^{-\theta_k v}-\E\left[\1_{\{v\geq  V^{(k)}\}}\left(v-V^{(k)}\right)^je^{\theta_k(V^{(k)}-v)} \right]\\&=v^je^{-\theta_k v}-\E\left[\left(v-V^{(k)}\right)^je^{\theta_k(V^{(k)}-v)}\right] +\E\left[\1_{\{V^{(k)}>v\}}\left(v-V^{(k)}\right)^je^{\theta_k(V^{(k)}-v)}\right]\\&\geq \left(1-\E\left[e^{\theta_k V^{(k)}}\right]\right)v^je^{-\theta_k v}+j \E\left[V^{(k)}e^{\theta_k V^{(k)}}\right]v^{j-1}e^{-\theta_k v} +\tilde{R}_{j-2}(v)e^{-\theta_k v} \\&\quad+\inf_{v\geq 0}\E\left[\left(v-V^{(k)}\right)^je^{\theta_k(V^{(k)}-v)} \mid V^{(k)}>v\right]\P(V^{(k)}>v)\\&\geq \left(1-\E\left[e^{\theta_k V^{(k)}}\right]\right)v^je^{-\theta_k v}+j\E\left[V^{(k)}e^{\theta_k V^{(k)}}\right]v^{j-1}e^{-\theta_k v}\\&\quad +\tilde{R}_{j-2}(v)e^{-\theta_k v} -K_k^j\1_{\{j\in 2\Z+1\}}\P(V^{(k)}>v)~,
		\end{align*}
		where $\tilde{R}_{j-2}$ is a polynomial of degree at most $j-2$; in the last inequality we only bounded the conditional expectation with $K_k^j$ when $j$ is odd, by accounting for the underlying sign. Also, for $j=0$,
		\begin{align*}
			&e^{-\theta_k v}-\E\left[\1_{\{v\geq  V^{(k)}\}}e^{\theta_k (V^{(k)}- v)} \right]\\& \geq \left(1-\E\left[e^{\theta_k V^{(k)}}\right]\right)e^{-\theta_k v}+\inf_{v\geq 0}\E\left[e^{\theta_k (V^{(k)}- v)}\mid  V^{(k)}>v\right]\P(V^{(k)}>v)\\&\geq \left(1-\E\left[e^{\theta_k V^{(k)}}\right]\right)e^{-\theta_k v}+\P( V^{(k)}>v)~.
		\end{align*}

It is thus sufficient to find the coefficients $A_j$'s satisfying the tighter inequality:
		\begin{equation}\label{Al-psi-A0M-tight}
			\begin{split}
				&A_{0}e^{\theta_k a} \left\lbrace \left(1-\E\left[e^{\theta_k V^{(k)}}\right]\right)e^{-\theta_k v}+\P( V^{(k)}>v)\right\rbrace\\
&\qquad+\sum_{j=1}^{d_{k}(\theta_k)} A_{j}e^{\theta_k a} \Bigg\lbrace \left(1-\E\left[e^{\theta_k V^{(k)}}\right]\right)v^je^{-\theta_k v}+j\E\left[V^{(k)}e^{\theta_k V^{(k)}}\right]v^{j-1}e^{-\theta_k v}\\&\qquad\qquad\qquad +\tilde{R}_{j-2}(v)e^{-\theta_k v} -K_k^j\1_{\{j\in 2\Z+1\}}\P(V^{(k)}>v)\Bigg\rbrace \\&\qquad\geq R_{d_{k-1}(\theta_{k-1})}(v)e^{-\zeta_k v}+\left(1+C \sum_{l=0}^{\left\lfloor (d_{k-1}(\theta_{k-1})-1)/2\right\rfloor}K_k^{2l+1}\right)\P(V^{(k)}>v)~.
			\end{split}
		\end{equation}

There are two main cases:

If $\theta_k<\theta_{k-1}$ (Case 1), i.e., node $k$ is the first bottleneck in the sequence $(1,2,\dots,k)$, then $d_k(\theta_k)=I_k(\theta_k)\in\{0,1\}$\footnote{In this case, $I_k(\theta_k)=0$ happens when $\E[e^{\theta (Y^{(k)}-X)}]< 1~\forall \theta>0$; recall the discussion on light-tailed distributions from the beginning of \ref{sec:m2}.}. If $\E[e^{\theta_kV^{(k)}}]<1$ (Case 1.1) then we first choose $A_1=0$. If $d_{k-1}(\theta_{k-1})=0$ (Case 1.1.1) then it is easy to see that $A_0>0$ sufficiently large is sufficient to satisfy \eqref{Al-psi-A0M-tight}. Instead, if $d_{k-1}(\theta_{k-1})>0$ (Case 1.1.2) then we recall that, by definition, $\zeta_k>\theta_k$; for this very reason, even if the polynomial $R_{d_{k-1}(\theta_{k-1})}(v)$ on the right side has a higher degree than $0$, a sufficiently large $A_0>0$ is sufficient to satisfy \eqref{Al-psi-A0M-tight}.

If $\E[e^{\theta_kV^{(k)}}]=1$ (Case 1.2) then $d_k(\theta_k)=1$. If $d_{k-1}(\theta_{k-1})=0$ (Case 1.2.1) then the existence of $A_0$ and $A_1$ is obvious. Otherwise, if $d_{k-1}(\theta_{k-1})\geq1$ (Case 1.2.2) then, again, $\zeta_k>\theta_k$, and the existence of $A_0$ and $A_1$ follows as in Case 1.1.2.

En passant, we point out that in the case $M=2$ from Theorem~\ref{th:existence2}, $d_1(\theta)=0~\forall\theta$ by definition and hence the polynomial on the right side has always a degree of $0$; for this reason, the additional parameter $\zeta$ along with the corresponding condition on conditional expectation were not necessary.

In the other main case, i.e., $\theta_k=\theta_{k-1}$ (Case 2), the terms containing $\P(V^{(k)}>v)$ are properly bounded by choosing a sufficiently large $A_0>0$. If  $\E[e^{\theta_kV^{(k)}}]<1$ (Case 2.1) then the coefficient of $A_j$ is a polynomial of degree $j$ with positive dominant coefficient, and since the right side of \eqref{Al-psi-A0M-tight} has degree $d_{k-1}(\theta_{k-1})=d_k(\theta_k)$ (because $\E[e^{\theta_kV^{(k)}}]<1\Rightarrow I_k(\theta_k)=0 $) there exist non-negative  $A_{d_k}, \ldots, A_0$ satisfying \eqref{Al-psi-A0M-tight}. Similarly,  if $\E[e^{\theta_kV^{(k)}}]=1$, the coefficient of $A_j$ is a polynomial of degree $j-1$ with positive dominant coefficient  and since the right side of \eqref{Al-psi-A0M-tight} has degree $d_{k-1}(\theta_{k-1})=d_k(\theta_k)-1$ (because $\E[e^{\theta_kV^{(k)}}]=1\Rightarrow I_k(\theta_k)=1$) it follows that there exist non-negative  $A_{d_k}, \ldots, A_0$ satisfying \eqref{Al-psi-A0M-tight}. Step 1 is thus complete.
		
		\textit{Step 2: }Let the $Q_{j}$'s from Step 1 and define $P_{j}(v_1,\ldots,v_j):=Q_{j}(v_j+a)~\forall j\in \{1,2,\ldots,m\}$. Then $\gamma_M$ satisfies \eqref{gammamnode}.
		
		\textit{Proof of Step 2: } We prove by induction on $m=1,\dots,M$ that the marginal functions
 $\gamma_m$ of $\gamma_M$ (i.e., restricted to the first $m$ components) satisfy \eqref{gammamnode}\footnote{With abuse of notation, in Step 2, when referring for some $m$ to \eqref{gammamnode} or other expressions in $M$, we mean the corresponding restrictions as if $M=m$.}. For $m=1$,
		\[\gamma_{1}(v_1):=(1-Q_{1}e^{-\theta_1 v_1})\1_{\{v_1\geq -a_+\}}\]
		satisfies for all $v_1\geq -a_+$
		\begin{align*}
			\E\left[\1_{\{v_1\geq V^{(1)}\}}\gamma_{1}(v_1-V^{(1)})\right]&=\E\left[\1_{\{v_1\geq V^{(1)}\}}\left(1-Q_{1}e^{-\theta_1 v_1+\theta_1 V^{(1)}}\right)\right]\\&\geq 1- Q_{1}e^{-\theta_1 v_1}=\gamma_{1}(v_1)~,
		\end{align*}
according to the construction from \eqref{Q1psi}.

For the induction step we assume that the statement holds for `$m-1$' and we need to prove it for `$m$'. Because $Q_{j}$'s
		satisfy Step 1 for $j=1,\dots,m-1$ and the random variables $V^{(1)},\ldots, V^{(m-1)}$, we have that
		\[ \gamma_{m-1}(v_1,\ldots,v_{m-1}):=\1_{\{(v_1,\ldots,v_{m-1})\in \Dcal^{m-1}_a\}}\left[ 1-\sum_{j=1}^{m-1}Q_j( v_{j}+a)e^{-\theta_j v_j}\right]\]
		satisfies \eqref{gammamnode}. Noting that
		\[\supp(\gamma_m\vee 0):=\{(v_1,\ldots, v_m)\in \Dcal^m_a: 1> \sum_{j=1}^{m}Q_{j}(v_j+a)e^{-\theta_j v_j} \} \]
it follows that if $(v_1,\ldots, v_m)\in \supp(\gamma_m\vee 0)$ then $(v_1,\ldots, v_{m-1})\in \supp(\gamma_{m-1}\vee 0)$.
		For all $(v_1,\ldots, v_m)\in \supp(\gamma_m\vee 0)$,
		\begin{align*}
			&\E\left[\1_{\{v_1\geq  V^{(1)}\}}\gamma_m\left(  \bigwedge_{1\leq i\leq 2}\left(v_i-V^{(i)}\right),  \ldots,\bigwedge_{m-1\leq i\leq m}\left(v_i-V^{(i)}\right), v_m-V^{(m)} \right)\right]\\&=\E\Bigg[\1_{\{v_1\geq  V^{(1)}, \forall 2\leq i\leq m:  v_i+a\geq V^{(i)}\}} \Bigg\lbrace 1-\sum_{j=1}^{m-1} Q_{j}\left(\bigwedge_{j\leq i\leq j+1}\left(a+v_i-V^{(i)}\right)\right)e^{-\theta_j\left(\bigwedge_{j\leq i\leq j+1}\left(v_i-V^{(i)}\right)\right)}\\&\qquad -Q_m\left(a+v_m-V^{(m)}\right)e^{-\theta_m\left(v_m-V^{(m)}\right)}\Bigg\rbrace\Bigg]\intertext{Since $Q_{m-1}(x)\geq 0~\forall x\geq0$, by construction, we have}&\geq \E\Bigg[\1_{\{v_1\geq  V^{(1)}, \forall 2\leq i\leq m:  v_i+a\geq V^{(i)}\}} \\&\qquad \Bigg\lbrace 1-\sum_{j=1}^{m-1} Q_{j}\left(\bigwedge_{j \leq i\leq (j+1)\wedge (m-1)}\left(a+v_i-V^{(i)}\right)\right)e^{-\theta_j\left(\bigwedge_{j \leq i\leq (j+1)\wedge (m-1)}\left(v_i-V^{(i)}\right)\right)}\\&\qquad -\sum_{j=m-1}^mQ_{j}\left(a+v_m-V^{(m)}\right)e^{-\theta_j\left(v_m-V^{(m)}\right)}\Bigg\rbrace \Bigg]\\&=	\E\Biggl[\1_{\{v_1\geq  V^{(1)}, v_m+a<V^{(m)}, \forall 2\leq i\leq m-1,  v_i+a\geq V^{(i)} \}} \\&\qquad \Bigg\lbrace -1+\sum_{j=1}^{m-1} Q_{j}\left(\bigwedge_{j \leq i\leq (j+1)\wedge (m-1)}\left(a+v_i-V^{(i)}\right)\right)e^{-\theta_j\left(\bigwedge_{j \leq i\leq (j+1)\wedge (m-1)}\left(v_i-V^{(i)}\right)\right)}\Bigg\rbrace\Biggr]\end{align*}\begin{align*}&\quad +	\E\Bigg[\1_{\{v_1\geq  V^{(1)},  \forall 2\leq i\leq m-1,  v_i+a\geq V^{(i)}\}}\\&\qquad \left\lbrace 1-\sum_{j=1}^{m-1} Q_{j}\left(\bigwedge_{j \leq i\leq (j+1)\wedge (m-1)}\left(a+v_i-V^{(i)}\right)\right)e^{-\theta_j\left(\bigwedge_{j \leq i\leq (j+1)\wedge (m-1)}\left(v_i-V^{(i)}\right)\right)}\right\rbrace \Bigg]\\&\quad -\E\left[\1_{\{v_m+a\geq  V^{(m)}\}}\sum_{j=m-1}^mQ_{j}\left(a+v_m-V^{(m)}\right)e^{-\theta_j\left(v_m-V^{(m)}\right)}\right]\intertext{Using the positivity of the $Q_j$'s for the first expectation and the induction hypothesis for the second we can continue}&\geq  -\P\left(v_m+a< V^{(m)}\right)+\left(1-\sum_{j=1}^{m-1}Q_{j}(a+v_j)e^{-\theta_jv_j}\right)\\&\quad -\E\left[\1_{\{v_m+a\geq  V^{(m)}\}}\sum_{j=m-1}^mQ_{j}\left(a+v_m-V^{(m)}\right)e^{-\theta_j\left(v_m-V^{(m)}\right)}\right]
			\\&\geq 1-\sum_{j=1}^{m}Q_{j}(a+v_j)e^{-\theta_j v_j}=\gamma_m(v_1,\ldots, v_m)~.
		\end{align*}
In the last inequality we applied Step 1 on the event $v_m+a\geq V^{(m)}$.

Finally, the remaining conditions from part (b) of Theorem~\ref{th:psigammaeta}, i.e., $\gamma$ is bounded and $\gamma(\infty,\infty)=1$ hold trivially.
	\end{proof}

\section{Finding the Polynomials' Parameters for $GI/G/1\rightarrow \cdot/G/1$}\label{app:fppGM}
\begin{lemma}\label{cor:gg1gg1exp}
The following set of eight inequalities is sufficient to satisfy \eqref{eq:gg1gg1cond}
		\[{\color{blue}A\E\left[e^{\theta V}\mid U> u\right]+B\E\left[(v-V)e^{\theta V}\mid U> u\right]+C\E\left[(u-U)e^{\theta V}\mid U> u\right]\geq 0}\]
		\[{\color{orange}B\E\left[Ve^{\theta V}\right]+C\E\left[Ue^{\theta V}\right]\geq 0}\]
		\[{\color{green}(A+D)e^{\theta a}K_0^Z(v+a+X)-(B+C)e^{\theta a}\left(a K_0^Z(v+a+X)+K_1^Z(v+a+X)\right)\geq 1}\]
		\[{\color{purple}CK_1^Z(v-u+Y)+D(1-K_0^Z(v-u+Y))\geq 0}\]
		\[{\color{violet}D\E[K_0^Y(u+X)\1_{\{Y>u+X\geq 0\}}]\geq \P(Y>u+X\geq 0) }\]\[{\color{brown}D\E[e^{-\theta (u+X)}\mid0>u+X]\P(Y=0)\geq \P(Y=0)}\]\[ {\color{pink}DK_0^Y(0)\E[e^{-\theta (u+X)}\mid 0>u+X]\geq 1}\]
		{\color{red}\[\begin{aligned}&Ae^{\theta a}\E [K_0^Z(v+a+X)\1_{\{Z> v+a+X, u+X+a\wedge 0<  Y \leq  u+X\}}]\\&-Be^{\theta a}\Big(\E [K_1^Z(v+a+X)\1_{\{Z> v+a+X, u+X+a\wedge 0< Y \leq  u+X\}}]\\&\qquad+a\E [K_0^Z(v+a+X)\1_{\{Z> v+a+X, u+X+a\wedge 0< Y \leq  u+X\}}]\Big)\\&+Ce^{\theta a}\E[(u-U)K_0^Z(v+a+X)\1_{\{0\geq U-u>a\wedge 0, V>v+a\}}]+D\E\left[\1_{\{ u\geq U> a\wedge 0+u, V>a+v\}}e^{-\theta u+\theta U}\right]\\&\geq \E[\1_{\{Z> v+a+X, u+X+a\wedge 0<  Y \leq  u+X\}}]~.\end{aligned}\]}
\end{lemma}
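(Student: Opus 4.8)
The plan is to treat the claim as a reduction: I would expand the integrand of~\eqref{eq:gg1gg1cond}, peel the two indicators $\1_{\{u\ge U\}}$ and $\1_{\{v+a\ge V\}}$, dispose of the minimum $m:=(u-U)\wedge(v-V)$, and condition on $X$ (recall $U=Y-X$, $V=Z-X$, with $X,Y,Z$ independent, so $V-U=Z-Y$), so that every surviving expectation is an elementary functional of $Y$ or of $Z$ and can be written through $\bar{F}_R$ and the $K^R_i$. Each term so produced is then matched to exactly one of the eight displayed inequalities, and the work is to check that the matching is correct and that nothing is lost in between, i.e., that every term discarded along the way is non-negative.

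The minimum is handled differently for its two occurrences. For the factor $e^{-\theta m}$, since $D\ge 0$, I would use subadditivity, $e^{-\theta m}=e^{\theta\max\{U-u,\,V-v\}}\le e^{\theta(U-u)}+e^{\theta(V-v)}$, which splits the $D$-term into a ``first-queue'' contribution in $U-u$ and a ``second-queue'' one in $V-v$. For the linear factor $C\,m$ I would instead split on the branches $\{u-U\le v-V\}$ and $\{u-U>v-V\}=\{Z>v-u+Y\}$; on the first $m=u-U$, while on the second, writing $v-V=(u-U)-\bigl(Z-(v-u+Y)\bigr)$ and integrating out $Z$ conditionally on $X,Y$ collapses this contribution onto $K^Z_0,K^Z_1$ evaluated at $v-u+Y$, which is precisely inequality~4 (note that $v-u+Y\ge 0$ on $\Dcal_a$, so for exponential $Z$ these become constants).

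On the ``first-queue'' side I would group the $+1$ with $-De^{\theta(U-u)}$ and invoke the defining property of $D=Q_1$, namely $\E[\1_{\{u\ge U\}}(1-De^{\theta(U-u)})]\ge 1-De^{-\theta u}$, which is exactly~\eqref{Q1Step1}/\eqref{gamma2}; these already reconstitute the $1-De^{-\theta u}$ part of the right-hand side of~\eqref{eq:gg1gg1cond}. What remains is the discrepancy between $\1_{\{u\ge U,\,v+a\ge V\}}$ and $\1_{\{u\ge U\}}$, which feeds into inequality~8 (its indicator contains $\{Z>v+a+X\}=\{V>v+a\}$), and the discrepancy between $\{Y\le u+X\}$ and the full line, which after conditioning on $X$ yields the $\P(Y>u+X\ge 0)$ and $\P(Y=0)$ corrections of inequalities~5, 6, 7 (with 6 and 7 covering the degenerate range $u+X<0$, where the only admissible value of $Y$ is the atom at $0$). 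On the ``second-queue'' side the $A,B,C,D$ pieces all carry the factor $e^{-\theta(v-V)}$; taking full expectations through $\E[e^{\theta V}]=1$, $\E[Ve^{\theta V}]$, and using $m\le u-U$ for the linear term, these reconstitute $-(A+Bv+Cu)e^{-\theta v}$ up to the residual $e^{-\theta v}\bigl(B\E[Ve^{\theta V}]+C\E[Ue^{\theta V}]\bigr)$ of inequality~2; the non-negativity of the coefficient $A+B(v-V)+C\,m$ wherever a positive piece is thrown away is inequality~1; the main balance carrying the $e^{\theta a}K^Z_i(v+a+X)$ factors, produced by the truncation $\{V\le v+a\}$ after conditioning on $X$, is inequality~3; and the correction from that same truncation on the complementary event, intertwined with the admissible range of $Y$ and the branch of the minimum, is inequality~8, whence its nested indicator $\{Z>v+a+X,\ (u+X+a)\wedge 0<Y\le u+X\}$.

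I expect the obstacle to be bookkeeping rather than any single estimate: the two indicator peels and the minimum split create several overlapping regions, and for the lower bound to go through one must verify, region by region, that every discarded term is non-negative — this is where $D\ge 0$, the coefficient positivity $A+Bv+Cu\ge 0$ (both already required for $\gamma$ to be admissible in Theorem~\ref{th:psigammaeta}(b)), and inequalities~1 and 2 are consumed — while the retained terms must reassemble, with no leftover, into exactly the eight inequalities. The delicate case should be inequality~8, the $\{V>v+a\}$-correction: it is the one place where the truncation of $Z$, the admissible range of $Y$, and the choice of branch in the minimum all interact at once, so lining up its single indicator with its five summands absorbs most of the routine work (the $X$-conditioning, the elementary exponential integrals producing the $K^R_i$, and bounds of the type $te^{-\theta t}\le(e\theta)^{-1}$).
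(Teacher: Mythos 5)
Your overall plan mirrors the paper's: rewrite \eqref{eq:gg1gg1cond} as $AF_A+BF_B+CF_C+DF_D\ge F_1$, expand each piece by peeling the two indicators, condition on $X$ (using the independence of $X,Y,Z$) so every surviving expectation reduces to $\bar F_R$ and $K^R_i$, and regroup the resulting terms into a sum of separately sufficient ``coloured'' inequalities. Your identification of inequalities 1, 2, 3, 5, 6, 7 and 8 with the respective discarded or retained regions is also the paper's bookkeeping.

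The gap is in the treatment of $e^{-\theta m}$, $m=(u-U)\wedge(v-V)$. You bound it by subadditivity, $e^{-\theta m}\le e^{\theta(U-u)}+e^{\theta(V-v)}$. The paper instead uses the exact rewriting
\[
e^{-\theta u}-\E\!\left[\1_{\{u\ge U,\,v+a\ge V\}}e^{-\theta m}\right]
=\E\!\left[\1_{\{u<U\text{ or }v+a<V\}}e^{\theta(U-u)}\right]+\E\!\left[\1_{\{u\ge U,\,v+a\ge V\}}\bigl(e^{\theta(U-u)}-e^{-\theta m}\bigr)\right],
\]
which is an identity because $\E[e^{\theta U}]=1$, and then observes that $e^{\theta(U-u)}-e^{-\theta m}$ vanishes on $\{u-U\le v-V\}$ and equals $e^{\theta(U-u)}-e^{\theta(V-v)}$ on $\{u-U>v-V\}=\{Z>v-u+Y\}$. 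Integrating out $Z$ conditionally on $X,Y$ on that branch is precisely what produces the factor $1-K_0^Z(v-u+Y)$ in the purple group, so inequality~4 couples $C$ \emph{and} $D$: $CK_1^Z+D\bigl(1-K_0^Z\bigr)\ge 0$. Your subadditivity bound gives away the $\min\{e^{\theta(U-u)},e^{\theta(V-v)}\}$ term, which on the branch $\{Z>v-u+Y\}$ is exactly the $+e^{\theta(U-u)}$ that generates the ``$+D$'' in $1-K_0^Z$. You would therefore obtain the strictly stronger condition $CK_1^Z-DK_0^Z\ge 0$ (recall $K_0^Z\ge 1$), not the stated inequality~4; in other words, a valid sufficient set, but not \emph{the} eight inequalities the lemma asserts. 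Consistent with this, you attribute inequality~4 entirely to the branching of the linear term $Cm$; in fact only the $CK_1^Z$ half comes from there, and the $D(1-K_0^Z)$ half must come from the exact decomposition of the $D$-term. To close the gap, replace subadditivity by the exact identity above and carry the $D$-piece on $\{u-U>v-V\}$ together with the $Cm$-piece through the same $Z$-integration; the rest of your matching then goes through as you describe.

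Two smaller remarks. First, the lemma's proof does not need to invoke the defining property of $Q_1$ from Theorem~\ref{th:existence2}: the single-queue balance is already encoded by inequalities 5, 6, 7 once you split $\{U>u\}=\{Y>u+X\}$ over $\{u+X\ge 0\}$, $\{Y>0>u+X\}$, $\{Y=0>u+X\}$, as you correctly describe. Second, note that the subadditivity $\max\{x,y\}\le x+y$ \emph{is} used in the paper, but in Step~2 of Theorem~\ref{th:existence2}; Lemma~\ref{cor:gg1gg1exp} is the sharper bookkeeping that deliberately avoids it.
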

When $a\geq0$, the last inequality is satisfied automatically as all the indicators vanish; also, the sixth inequality is satisfied trivially when $Y$ is a continuous r.v. The eight inequalities are obtained by expanding \eqref{eq:gg1gg1cond} and grouping terms from its left and right sides. Due to the obvious tediousness we depict each group in a different colour. As already hinted, the above grouping is likely sub-optimal, i.e., different groupings or other more direct approaches not involving simplifying groups may lend themselves to better $a,A,B,C,D$ in the sense of minimizing the bound on $\P(W>x)$.

\begin{proof}
Inequality \eqref{eq:gg1gg1cond} is equivalent to
		\begin{equation} \label{ABCDpsi1}
			AF_A+BF_B+CF_C+DF_D\geq F_1
		\end{equation}
		where for $u\geq -a_+, v\geq -a, v\geq u$,
		\begin{align*}
			F_A(u,v)&:=e^{-\theta v} -\E\left[\1_{\{u\geq  U, v+a\geq  V\}}e^{\theta (V-v)} \right]\\&\ =\E\left[\1_{\{u< U\}}e^{\theta (V-v)} \right]+\E\left[\1_{\{u\geq U, v+a<  V\}}e^{\theta (V-v)} \right]\\&\ ={\color{blue}e^{-\theta v}\E\left[e^{\theta V}\mid U> u\right]\P(U> u)}\\&\ \quad {\color{green} +e^{\theta a}\E [K_0^Z(v+a+X)\1_{\{Z> v+a+X, Y \leq  u+X+a\wedge 0\}}]}\\&\ \quad {\color{red} +e^{\theta a}\E [K_0^Z(v+a+X)\1_{\{Z> v+a+X, u+X+a\wedge 0<  Y \leq  u+X\}}]}~.
\end{align*}
The last two lines are obtained from the second term on the second line by using standard properties of conditional expectation and the independence of $X$, $Y$, $Z$:
\begin{align*}
\E\left[\1_{\{u\geq U, v+a<  V\}}e^{\theta (V-v)} \right]&=e^{\theta a}\E\left[\E\left[\1_{\{Z>X+v+a\}}\1_{\{Y-X\leq u\}}e^{\theta(Z-X-v-a)}\mid X\right]\right]\\
&=e^{\theta a}\E\left[\E\left[\1_{\{Z>X+v+a\}}e^{\theta(Z-X-v-a)}\mid X\right]\E\left[\1_{\{Y-X\leq u\}}\mid X\right]\right]\\
&=e^{\theta a}\E\left[\E\left[e^{\theta(Z-X-v-a)}\mid Z>X+v+a, X\right]\P(Z>X+v+a\mid X)\E\left[\1_{\{Y-X\leq u\}}\mid X\right]\right]\\
&=e^{\theta a}\E\left[K_0^Z(X+v+a)\P(Z>X+v+a\mid X)\E\left[\1_{\{Y-X\leq u\}}\mid X\right]\right]\\
&=e^{\theta a}\E\left[\E\left[K_0^Z(X+v+a) \1_{\{Z>X+v+a\}}\1_{\{Y-X\leq u\}}\mid X\right]\right]\\
&=e^{\theta a}\E\left[K_0^Z(X+v+a) \1_{\{Z>X+v+a\}}\1_{\{Y-X\leq u\}}\right]~.
\end{align*}
In the next to last line we used the measurability of $K_0^Z(X+v+a)$ and $\P(Z>X+v+a\mid X)$ with respect to the $\sigma$-field generated by $X$. The indicator $\1_{\{Y-X\leq u\}}$ easily expands to obtain the last two lines from the previous equation (for the formation of the convenient `coloured' groups). We will use the same argument in the expansions of $F_B$, $F_C$, $F_D$:

\begin{align*}
			F_B(u,v)&:=ve^{-\theta v} -\E\left[\1_{\{u\geq U, v+a\geq V\}}(v- V)e^{\theta (V-v)} \right]
			\\&\ =\E\left[\1_{\{U> u \text{ or }   V>v+a \}}(v-V)e^{-\theta v+\theta V}\right]+
			\E\left[Ve^{\theta (V-v)} \right]
			\\&\ = {\color{blue}e^{-\theta v}\E\left[(v-V)e^{\theta V}\mid U> u\right]\P(U> u)} {\color{orange}+\E\left[Ve^{\theta V}\right]e^{-\theta v}}\\&\ \quad{\color{green}-e^{\theta a}\E [K_1^Z(v+a+X)\1_{\{Z> v+a+X, Y \leq  u+X+a\wedge 0\}}]}\\&\ \quad {\color{green}-ae^{\theta a}\E [K_0^Z(v+a+X)\1_{\{Z> v+a+X, Y \leq  u+X+a\wedge 0\}}]}\\&\ \quad{\color{red}-e^{\theta a}\E [K_1^Z(v+a+X)\1_{\{Z> v+a+X, u+X+a\wedge 0< Y \leq  u+X\}}]}\\&\ \quad {\color{red}-ae^{\theta a}\E [K_0^Z(v+a+X)\1_{\{Z> v+a+X, u+X+a\wedge 0< Y \leq  u+X\}}]}
			\intertext{$ \ $}
			F_C(u,v)&:=ue^{-\theta v} -\E\left[\1_{\{ u\geq  U, v+a\geq   V\}}\left( (u- U) \wedge(v- V) \right) e^{\theta (V-v)} \right]\\&\
			=\E\left[\1_{\{u< U \text{ or } v+a<  V\}}(u-U)e^{-\theta v+\theta V}\right]+
			\E\left[Ue^{\theta (V-v)} \right]\\&\ \quad + \E\left[\1_{\{ u\geq  U, a\geq   V-v\geq U-u\}}\left( u-v-U+V \right) e^{\theta (V-v)} \right] \\&\
			=\E\left[\1_{\{u< U\}}(u-U)e^{-\theta v+\theta V}\right]+\E\left[\1_{\{u+a\wedge0\geq  U,  V-v>a\}}(u-U)e^{-\theta v+\theta V}\right]\\&\ \quad +\E\left[\1_{\{0\geq  U-u>a\wedge0,  V-v>a\}}(u-U)e^{-\theta v+\theta V}\right]
			\\&\ \quad+\E\left[Ue^{\theta (V-v)} \right]-\E\left[\1_{\{ a\wedge 0+u\geq  U,    V-v>a\}}\left( u-v-U+V \right) e^{\theta (V-v)} \right] \\&\ \quad+ \E\left[\1_{\{ a\wedge 0+u\geq  U,V-v\geq U-u\}}\left( u-v-U+V \right) e^{\theta (V-v)} \right]\\&\
			=\E\left[\1_{\{u< U\}}(u-U)e^{-\theta v+\theta V}\right]+\E\left[\1_{\{u+a\wedge0\geq  U,  V-v>a\}}(v-V)e^{-\theta v+\theta V}\right]\\&\ \quad +\E\left[\1_{\{0\geq  U-u>a\wedge0,  V-v>a\}}(u-U)e^{-\theta v+\theta V}\right]
			\\&\ \quad+\E\left[Ue^{\theta (V-v)} \right]+ \E\left[\1_{\{ a\wedge 0+u\geq  U,V-v\geq U-u\}}\left( u-v-U+V \right) e^{\theta (V-v)} \right] \\&\
			={\color{orange}\E[Ue^{\theta V}]e^{-\theta v}} {\color{blue}+\E[(u-U)e^{\theta V}\mid U> u]\P(U> u)e^{-\theta v}}\\&\ \quad {\color{red} +e^{\theta a}\E[(u-U)K_0^Z(v+a+X)\1_{\{0\geq U-u>a\wedge 0, V>v+a\}}]}\\&\ \quad{\color{green}-e^{\theta a}\E [K_1^Z(v+a+X)\1_{\{Z> v+a+X, Y \leq  u+X+a\wedge 0\}}]}\\&\ \quad {\color{green}-ae^{\theta a}\E [K_0^Z(v+a+X)\1_{\{Z> v+a+X, Y \leq  u+X+a\wedge 0\}}]}\\&\quad\  {\color{purple}+\E\left[K_1^Z(v-u+Y)\1_{
					\{Z>v-u+Y, Y\leq  u+ X+a\wedge 0\}}e^{\theta( Y-X-u)}\right]}
			\intertext{$ \ $}
			F_D(u,v)&:= e^{-\theta u} -\E\left[\1_{\{ u\geq U, v+a\geq V\}}e^{-\theta \left((u- U)\wedge(v- V) \right)} \right]\\&\ =\E\left[\1_{\{ u< U \text{ or } v+a< V\}}e^{-\theta u+\theta U}\right] \\&\quad \ +\E\left[\1_{\{ u\geq U, v+a\geq V\}}\left\lbrace e^{-\theta u+\theta U}-e^{-\theta \left((u- U)\wedge(v- V) \right)} \right\rbrace \right]\\&\ =\E\left[\1_{\{ u< U\}}e^{-\theta u+\theta U}\right]+\E\left[\1_{\{ a\wedge 0+u\geq U, V>a+v\}}e^{-\theta u+\theta U}\right] \\&\quad \ +\E\left[\1_{\{ u\geq U> a\wedge 0+u, V>a+v\}}e^{-\theta u+\theta U}\right]\\&\quad \ +\E\left[\1_{\{a\wedge 0+ u\geq U, a\geq  V-v\geq U-u\}}\left\lbrace e^{-\theta u+\theta U}-e^{-\theta v+\theta V} \right\rbrace \right]\\&
			\ =\E\left[\1_{\{ u< U\}}e^{-\theta u+\theta U}\right]+\E\left[\1_{\{ a\wedge 0+u\geq U, V-v>U-u\}}e^{-\theta u+\theta U}\right] \\&\quad \ +\E\left[\1_{\{ u\geq U> a\wedge 0+u, V>a+v\}}e^{-\theta u+\theta U}\right]
\\&\quad \ -\E\left[\1_{\{a\wedge 0+ u\geq U,  V-v\geq U-u\}}e^{-\theta v+\theta V} \right]+\E\left[\1_{\{a\wedge 0+ u\geq U,  V-v>a\}}e^{-\theta v+\theta V} \right]
\end{align*}\begin{align*}&\ ={\color{violet}\E[K_0^Y(u+X)\1_{\{Y> u+X\geq 0\}}]}\\&\ \quad {\color{pink}+\E[K_0^Y(0)e^{-\theta (u+X)}\1_{\{Y>0> u+X\}}]}{\color{brown}+\E[e^{-\theta (u+X)}\1_{\{Y=0>u+X\}}]}\\&\ \quad {\color{purple}+\E\left[(1-K_0^Z(v-u+Y))\1_{
				\{Z>v-u+Y, Y\leq  u+ X+a\wedge 0\}}e^{\theta( Y-X-u)}\right]}
			\\&\ \quad {\color{red} +\E\left[\1_{\{ u\geq U> a\wedge 0+u, V>a+v\}}e^{-\theta u+\theta U}\right]}\\&\ \quad{\color{green}+e^{\theta a}\E [K_0^Z(v+a+X)\1_{\{Z> v+a+X, Y \leq  u+X+a\wedge 0\}}]}
\end{align*}
Lastly
\begin{align*}
			F_1(u,v)&:=\P(U> u) +\P(V> v+a, U\leq  u)\\&\ ={\color{violet}\P(Y>u+X\geq 0)}{\color{pink} +\P(Y>0>u+X)}{\color{brown} +\P(Y=0>u+X)}\\&\ \quad {\color{green}+\P(Z> v+a+X, Y \leq  u+X+a\wedge 0)}\\&\ \quad {\color{red}+\P(Z> v+a+X, u+X+a\wedge 0<  Y \leq  u+X)}
		\end{align*}
\end{proof}

\section{Extension to Alternating Renewal Processes: $AR/G/1\rightarrow\cdot/G/1$}\label{app:AR}
\begin{theorem}\label{th:psigammaetaAR}
	Let the alternate renewal process $X_k$ defined in~\ref{sec:extensions} and $Y,Z$ be two r.v.'s (the service times). Define $U^{(i)}\simeq  Y-X^{(i)}$ and $V^{(i)}\simeq Z-X^{(i)}$ and assume that
 $\P(U^{(i)}>0)>0$ and $\P(V^{(i)}>0)>0$ for $i=1,2$.
	\begin{enumerate}[label=(\alph*)]
		\item The coupled integral equations $i=1,2$
		\begin{equation}\label{equationpsiAR}
			\E\left[\1_{\{u+X^{(i)}\geq  Y\}}\psi_{i}\left((u-U^{(i)})\wedge (v-V^{(i)}), v-V^{(i)} \right)\right]=\psi_{3-i}(u,v)
		\end{equation}
		admit a unique solution in the class of bounded functions on $\Rbar^2$  having the limits $\psi_i(\infty,\infty)=\lim_{u,v\to \infty}\psi_i(u,v)=1$. This is given by
		$$\psi_i(u,v):=\P( T^1_{1}\leq u, T^{2}_{1}\leq v \mid B=i)~,$$ where
		\[ T^1_k:=\sup_{k\leq  i<\infty}(Y_k-X_k)+\cdots+(Y_{i}-X_i)\]
		\[ T^2_k:=\sup_{k\leq  i<j<\infty}(Z_k-X_k)+\cdots+(Z_{i}-X_i)+(Y_{i+1}-X_{i+1})+\cdots+(Y_{j}-X_j)\]
		for $k\geq1$ and $(Y_1, Z_1)$, $(Y_2, Z_2),   \ldots $ being i.i.d. copies of $(Y, Z)$.
		\item	Assume that the functions $\gamma_i:\Rbar^2\to (-\infty,K_\gamma], i=1,2$, for some finite $K_\gamma$, satisfy
		for all  $(u,v)\in \supp(\gamma_1\vee \gamma_2\vee 0)\subseteq \Rbar^2$
		\begin{equation}\label{pre-estimatepsiAR}
		\E\left[\1_{\{u+X^{(i)}\geq  Y\}}\gamma_{i}\left((u-U^{(i)})\wedge (v-V^{(i)}), v-V^{(i)} \right)\right]\geq \gamma_{3-i}(u,v)~.
		\end{equation}
		If $\gamma_i(\infty, \infty):=\limsup_{u,v\to \infty}\gamma_i(u,v)=1$  then $\psi_i\geq\gamma_i$ for $i=1,2$.
		\item	Assume that the  functions $\eta_i:\Rbar^2\to [0,\infty), i=1,2$ satisfy
		for all  $(u,v)\in \supp(\psi_1\vee \psi_2)$ with $v\geq u$
		\begin{equation}\label{pre-estimateetaAR}
				\E\left[\1_{\{u+X^{(i)}\geq  Y\}}\eta_{i}\left((u-U^{(i)})\wedge (v-V^{(i)}), v-V^{(i)} \right)\right]\geq \eta_{3-i}(u,v)~.
		\end{equation}
		If
		$\eta_i(\infty, \infty):=\liminf_{u,v\to \infty}\eta_i(u,v)=1$ then $\psi_i(u,v)\leq\eta_i(u,v)$ for all $v\geq u$ and $i=1,2$.
	\end{enumerate}
\end{theorem}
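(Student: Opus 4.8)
The plan is to adapt the three-part argument of Theorem~\ref{th:psigammaeta} to the coupled setting; the only genuine novelty is that each application of the one-step operator flips the phase $i\leftrightarrow 3-i$, so the variational arguments of parts (b) and (c) must be run with a single constant that dominates \emph{both} coordinate functions at once.

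For part (a), I would verify that the candidate pair $(\psi_1,\psi_2)$ solves \eqref{equationpsiAR} by the first-increment extraction used in the proof of Theorem~\ref{th:psigammaeta}(a), now conditioning additionally on the initial phase $B$. Conditionally on $\{B=3-i\}$, the interarrival attached to the first increment of the walks defining $T^1_1,T^2_1$ has law $X^{(i)}$, while the residual walks $T^1_2,T^2_2$ (using indices $\ge 2$) are, conditionally, distributed as the phase-$i$ walks $T^1_1,T^2_1$ and are independent of that first increment; peeling it off and invoking the same set identities as in Theorem~\ref{th:psigammaeta}(a), with $U^{(i)}\simeq Y-X^{(i)}$ and $V^{(i)}\simeq Z-X^{(i)}$, gives $\psi_{3-i}(u,v)=\E[\1_{\{u+X^{(i)}\ge Y\}}\psi_i((u-U^{(i)})\wedge(v-V^{(i)}),v-V^{(i)})]$, which is \eqref{equationpsiAR}. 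Boundedness and the limits $\psi_i(\infty,\infty)=1$ follow as there. For uniqueness, if $(\psi_1,\psi_2)$ and $(\psi_1',\psi_2')$ are two bounded solution pairs with unit limits, I would apply part (b) twice — once with $(\gamma_1,\gamma_2)=(\psi_1',\psi_2')$ and once with the roles reversed — to conclude $\psi_i=\psi_i'$ for $i=1,2$.

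For part (b), set $f_i:=\limsup(\gamma_i-\psi_i)$ on $\Rbar^2$ (upper-semicontinuous by Appendix~\ref{app:sc} and bounded above by $K_\gamma$), let $K:=\max\{\sup_{\Rbar^2}f_1,\ \sup_{\Rbar^2}f_2\}$, let $(a,b)$ minimize the first and then the second coordinate over $\{f_1=K\}\cup\{f_2=K\}$, and pick $i_0$ with $f_{i_0}(a,b)=K$. If $K\le 0$ we are done; assume $K>0$. Along a sequence $(a_n,b_n)\to(a,b)$ with $(\gamma_{i_0}-\psi_{i_0})(a_n,b_n)\to K>0$ we have, since $\psi_{i_0}\le 1$, that $(a_n,b_n)\in\supp(\gamma_{i_0}\vee 0)\subseteq\supp(\gamma_1\vee\gamma_2\vee 0)$ eventually, so \eqref{pre-estimatepsiAR} with $i=3-i_0$ applies; combining it with the integral equation \eqref{equationpsiAR} for $\psi_{i_0}$, passing to the limit via Fatou (licit since $\gamma_{3-i_0}-\psi_{3-i_0}\le K_\gamma<\infty$), and using the definitions of $K$ and $f_{3-i_0}$ gives, exactly as in Theorem~\ref{th:psigammaeta}(b),
\[K\le K\,\P\!\left(a=U^{(3-i_0)}\right)+\E\!\left[\1_{\{a>U^{(3-i_0)}\}}\,f_{3-i_0}\!\left((a-U^{(3-i_0)})\wedge(b-V^{(3-i_0)}),\,b-V^{(3-i_0)}\right)\right]\le K\,\P\!\left(a\ge U^{(3-i_0)}\right),\]
hence $\P(a\ge U^{(3-i_0)})=1$ and $f_{3-i_0}((a-U^{(3-i_0)})\wedge(b-V^{(3-i_0)}),b-V^{(3-i_0)})=K$ a.s. If $a<\infty$, then $\P(U^{(3-i_0)}>0)>0$ produces, with positive probability, a point of $\{f_{3-i_0}=K\}$ with first coordinate $<a$, contradicting the minimality of $a$ over \emph{both} level sets; so $a=\infty$, the relevant point becomes $(b-V^{(3-i_0)},b-V^{(3-i_0)})$, and $b<\infty$ together with $\P(V^{(3-i_0)}>0)>0$ would again yield a point of $\{f_{3-i_0}=K\}$ with finite first coordinate, contradicting $a=\infty$. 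Hence $(a,b)=(\infty,\infty)$ and $K=f_{i_0}(\infty,\infty)=1-1=0$, a contradiction; so $K\le 0$, i.e.\ $\psi_i\ge\gamma_i$ for $i=1,2$.

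For part (c), I would repeat the same machinery on the closed half-space $\Dcal:=\{(u,v)\in\Rbar^2:v\ge u\}$, which is preserved by $(u,v)\mapsto((u-U)\wedge(v-V),v-V)$ since $b-V\ge(a-U)\wedge(b-V)$; with $f_i:=\limsup(\psi_i-\eta_i)$, $K:=\max\{\sup_\Dcal f_1,\sup_\Dcal f_2\}$, and the minimal point taken over both level sets, the argument runs identically, using \eqref{pre-estimateetaAR} and the bounds $\psi_i\le 1$, $\eta_i\ge 0$ (so $\psi_i-\eta_i\le 1$, which supplies both the Fatou majorant and the support membership $(a_n,b_n)\in\supp(\psi_1\vee\psi_2)$), and yields $\psi_i\le\eta_i$ on $v\ge u$. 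The main obstacle is precisely this coupled variational step: because the operator sends the deficit in the $i_0$-component to the $(3-i_0)$-component, one must minimize the offending point over the \emph{union} $\{f_1=K\}\cup\{f_2=K\}$ rather than within a fixed index, after which the $M=2$ proofs transfer mechanically; a secondary, purely bookkeeping, matter is tracking in part (a) which of $X^{(1)},X^{(2)}$ decorates the first increment after conditioning on $B$ — i.e.\ that the operator built from $X^{(i)}$ maps $\psi_i$ to $\psi_{3-i}$.
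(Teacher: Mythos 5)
Your proposal is correct and follows the paper's own proof essentially step for step: the same first-increment peeling (with phase bookkeeping) for part (a), the same upper-semicontinuous function $f_i$ and single constant $K=\sup(f_1\vee f_2)$ with lexicographic extremal point $(a,b)$ for part (b), the same Fatou/contradiction chain forcing $(a,b)=(\infty,\infty)$, and the same double application of part (b) for uniqueness, with part (c) a mirror argument on $\{v\ge u\}$ (which the paper also omits). You have in fact stated the phase-flipping index gymnastics more cleanly than the paper's own proof, which appears to carry a benign typo $(\gamma_{3-j}-\psi_{3-j})$ where $(\gamma_j-\psi_j)$ is meant at the start of the chain.
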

The proof is (very) similar to that of Theorem~\ref{th:psigammaeta}.

\begin{proof}
	For part (a) we first prove that the given $\psi$ satisfies \eqref{equationpsiAR}; uniqueness will follow after proving (b).  We have for some $i\in\{1,2\}$
	\begin{align*}
		&\psi_{3-i}(u,v)=\P(T^{1}_{1}\leq u, T^{2}_{1}\leq v \mid B=3-i)
		\\&=\P\left( T^{1}_2\leq  (u+X_1-Y_1)\wedge (v+X_1-Z_1), T^{2}_{2}\leq v+X_1-Z_1   , Y_1\leq u+X_1\mid B=i\right),\\&=\P\left( T^{1}_1\leq (u+X^{(i)}-Y)\wedge (v+X^{(i)}-Z), T^{2}_{1}\leq v+X^{(i)}-Z  , Y\leq u+X^{(i)} \right),\intertext{where $(Y,Z)$ is independent of $(T^{1}_{1},T^{2}_{1})$. So}
		&=\E\left[\1_{\{u+X^{(i)}\geq  Y\}}\psi_{i}\left((u-U^{(i)})\wedge (v-V^{(i)}), v-V^{(i)} \right)\right].
	\end{align*}
	
	To prove part (b), i.e., $\psi_i\geq \gamma_i$, define first the functions
	\[f_i(u,v):=\limsup_{(x,y)\to (u,v)}\left(\gamma_i(x,y)-\psi_i(x,y)\right)~\forall (u,v)\in \Rbar^2~, i=1,2\]
	which are upper-semi continuous and attain their maximum on any closed subset of $\Rbar^2$. Let
	\[K:=\sup_{(u,v)\in \Rbar^2}(f_1(u,v)\vee f_2(u,v))~.\]
	If $K\leq 0$ the proof is complete; assume otherwise that $K>0$. Define
	\[\Kcal:=\{(u,v)\in \Rbar^2: f_1(u,v)\vee f_2(u,v)=K\}~, \]
	which is a closed subset of $\Rbar^2$, and
	\[a:=\min\{u\in \Rbar: \exists v\in \Rbar: (u,v)\in \Kcal \}\]	\[b:=\min\{v\in \Rbar: (a,v)\in \Kcal \}~.\]
	Since $f_{j}(a,b)=K>0$ for either $j=1$ or $j=2$, there exists a sequence $(a_n,b_n)\in  {\supp (\gamma_1\vee \gamma_2\vee 0)}$ such that $(a_n,b_n)\to (a,b)$ as $n\to \infty$
	\begin{align*}K&=f_{j}(a,b)=\lim_{n\to \infty}(\gamma_{3-j}-\psi_{3-j})(a_n,b_n)\\&\leq \limsup_{n\to \infty}\E\left[\1_{\{a_n\geq U^{(3-j)}\}}(\gamma_{3-j}-\psi_{3-j})\left((a_n-U^{(3-j)})\wedge (b_n-V^{(3-j)}), b_n-V^{(3-j)}\right)\right]~.\intertext{Since $\gamma_{3-j}-\psi_{3-j}\leq K_\gamma<\infty$, we can further use Fatou's lemma}&\leq \E\left[\limsup_{n\to \infty}\1_{\{a_n\geq U^{(3-j)}\}}(\gamma_{3-j}-\psi_{3-j})\left((a_n-U^{(3-j)})\wedge (b_n-V^{(3-j)}), b_n-V^{(3-j)}\right)\right]\\&\leq K\cdot\P(a=U^{(3-j)}) +\E\left[\1_{\{a>U^{(3-j)}\}}f_{3-j}\left((a-U^{(3-j)})\wedge (b-V^{(3-j)}), b-V^{(3-j)}\right)\right]\\&\leq K\cdot\P(a \geq U^{(3-j)})~,\end{align*}
	using the definitions of $K$ and $f_j$'s. It then follows that $\P(a\geq U^{(3-j)})=1$, such that necessarily
	\begin{equation} \label{abinfAR}
		f_{3-j}\left((a-U^{(3-j)})\wedge (b-V^{(3-j)}), b-V^{(3-j)}\right)=K~
	\end{equation}
	holds a.s. for the inequalities above to hold as equalities.
	
	Next we claim that $(a,b)=(\infty,\infty)$. Assume by contradiction that $a<\infty$. Then
	\eqref{abinfAR} and  $\P(U^{3-j}>  0)>0$ contradict with the choice of $a$, and hence $a=\infty$. Similarly, assume by contradiction that $b<\infty$. Then \eqref{abinfAR} and $\P(V^{3-j}>0)>0$ contradict with the choice of $b$, and hence $b=\infty$ as well.
	Finally, \[K=	f_{j}(\infty, \infty)=\limsup_{u,v\to \infty}(\gamma_{j}-\psi_{j})(u,v)=0\]
	from the limiting conditions on $\gamma_i$ and $\psi_i$, and hence $\psi_i\geq \gamma_i$.
	
	We can now prove the uniqueness of $\psi=(\psi_1,\psi_2)$ solving for (\ref{equationpsiAR}). Let $\psi^{(1)}$ and $\psi^{(2)}$ be two bounded solutions satisfying $\psi_j^{(i)}(\infty,\infty)=\lim_{u,v\to \infty}\psi_j^{(i)}(u,v)=1, i,j\in\{1,2\}$. Applying the second part of the theorem with $\psi=\psi^{(i)}$ and $\gamma=\psi^{(3-i)}$ (note that the proof only needs that $\psi$ satisfies (\ref{equationpsiAR}), is bounded, and $\psi(\infty,\infty)=\lim_{u,v\to \infty}\psi(u,v)=(1,1)$) we obtain that $\psi^{(i)}\geq \psi^{(3-i)}$ for $i=1,2$, and hence $\psi^{(1)}=\psi^{(2)}$.
	
	The proof for the last part of the theorem, i.e., $\psi(u,v)\leq \eta(u,v)$ on $v\geq u$ is similar and is omitted.
\end{proof}

The connection between the generic functions $\gamma$ and $\eta$ from Parts (b) and (c) of Theorem~\ref{th:psigammaetaAR}, and bounds on $\P(W>x)$, is also immediate:

\begin{corollary}{\sc{(Generic Upper and Lower Bounds)}}\label{cor:WgammaetaAR}
	Consider the functions $\psi_i$, $\gamma_i$, and $\eta_i$, for $i=1,2$, as in Theorem~\ref{th:psigammaetaAR}. Then the waiting time of a job $n\to\infty$ satisfies for all $x\geq 0$
	\begin{align*}
		&1-\frac{1}{2}\E\left[(\eta_1+\eta_2)(x-(Z-Y)_+, x-(Z-Y) )\right]\\&\quad\leq \P(W>x)=1-\frac{1}{2}\E\left[(\psi_1+\psi_2)(x-(Z-Y)_+, x-(Z-Y) )\right]\notag\\&\quad\leq 1-\frac{1}{2}\E\left[(\gamma_1+\gamma_2)(x-(Z-Y)_+, x-(Z-Y))\right]~.
	\end{align*}
	The corresponding sojourn time satisfies
	\begin{align*}
		&1-\frac{1}{2}\E\left[\1_{\{Z_1+Y<x\}}(\eta_1+\eta_2)(x-(Z_1+Z_2\vee Y),x-(Z_1+Z_2)) \right]\notag\\&
		\qquad\leq \P(D>x)=1-\frac{1}{2}\E\left[\1_{\{Z_1+Y<x\}}(\psi_1+\psi_2)(x-(Z_1+Z_2\vee Y),x-(Z_1+Z_2)) \right]\notag\\&\qquad\leq 1-\frac{1}{2}\E\left[\1_{\{Z_1+Y<x\}}(\gamma_1+\gamma_2)(x-(Z_1+Z_2\vee Y),x-(Z_1+Z_2)) \right]~.
	\end{align*}
\end{corollary}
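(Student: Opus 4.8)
The plan is to mimic the proof of Corollary~\ref{cor:Wgammaeta}, the only genuinely new point being that in the alternating–renewal tandem the arrival stream seen by the tagged job is in one of two statistically distinct ``phases'', so that the single distribution function $\psi$ must be replaced by the equal mixture $\tfrac12(\psi_1+\psi_2)$. First I would note that the pathwise identity \eqref{eq:W2nodes}, $W=\max\{0,\,T^1+(Z-Y)_+,\,T^2+Z-Y\}$, together with the induced identity $D=W+Z_1+Y=\max\{Z_1+Y,\,Z_1+T^1+(Y\vee Z_2),\,Z_1+Z_2+T^2\}$, holds verbatim for $AR/G/1\rightarrow\cdot/G/1$: its derivation in~\ref{sec:m2} used only Lindley's recursion and a re-indexing of the three sequences, never the independence of the inter-arrival times. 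Here $T^1,T^2$ are the finite-horizon maxima built from the \emph{reversed} inter-arrival sequence of job $n$ and the corresponding reversed service sequences.

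Next I would identify the phase. Since $(X_k)$ is alternating and the driving Bernoulli $B$ is uniform on $\{1,2\}$ and independent of the service sequences, the reversed inter-arrival sequence of job $n$ is again an alternating sequence, with an initial phase that is a deterministic function of $B$ and of the parity of $n$; because $B$ is uniform, conditionally on the parity of $n$ this phase is uniform on the two states, and it is independent of the service variables $Y,Z_1,Z_2$ entering the representations above. Passing to the stationary limit $n\to\infty$ (Loynes~\cite{Loynes64}), conditionally on the phase being $i$ the pair $(T^1,T^2)$ converges in distribution to $(T^1_1,T^2_1)$ of Theorem~\ref{th:psigammaetaAR}(a) — i.e.\ to the law with distribution function $\psi_i$, up to the harmless relabeling $i\leftrightarrow 3-i$ forced by the reversal — and is still independent of $(Y,Z_1,Z_2)$. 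Averaging over the uniform $B$, it follows that $(T^1,T^2)$ has the mixture distribution function $\tfrac12(\psi_1+\psi_2)$ and remains independent of the service variables.

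Conditioning on the phase and repeating, one phase at a time, the elementary computation of Corollary~\ref{cor:Wgammaeta} gives $\P(W>x\mid\text{phase }i)=1-\E[\psi_i(x-(Z-Y)_+,\,x-(Z-Y))]$; averaging with weights $\tfrac12$ yields the middle equality for $\P(W>x)$. Substituting instead the $D$-representation from the first paragraph and repeating produces the indicator $\1_{\{Z_1+Y<x\}}$ and the arguments $x-(Z_1+Z_2\vee Y)$ and $x-(Z_1+Z_2)$, i.e.\ the middle equality for $\P(D>x)$. The outer bounds are then immediate: the arguments above always satisfy $x-(Z-Y)_+\le x-(Z-Y)$ and $x-(Z_1+Z_2\vee Y)\le x-(Z_1+Z_2)$, hence lie in the region $v\ge u$, so by parts (b) and (c) of Theorem~\ref{th:psigammaetaAR} we have $\gamma_i\le\psi_i\le\eta_i$ there; replacing each $\psi_i$ by $\gamma_i$ (resp.\ $\eta_i$) in the mixture turns each equality into the stated upper (resp.\ lower) bound on $\P(W>x)$ and $\P(D>x)$.

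The step I expect to be the real obstacle is the phase identification: carefully matching the reversed alternating inter-arrival sequence of job $n$, together with its phase bookkeeping (including the parity of $n$ and the pairing of the $X^{(1)},X^{(2)}$ draws), with the ``forward'' conditional laws $\psi_1,\psi_2$ of Theorem~\ref{th:psigammaetaAR}(a), and checking that averaging over the uniform $B$ really does collapse the two parity cases to the single mixture $\tfrac12(\psi_1+\psi_2)$ independently of the tagged job's service times. Everything after that is the routine manipulation already carried out for the $GI$ case in Corollary~\ref{cor:Wgammaeta}.
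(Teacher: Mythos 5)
Your proposal is correct and follows essentially the same route as the paper's own proof: it starts from the representation $W=\max\{0,\,T^1+(Z-Y)_+,\,T^2+Z-Y\}$ (and the corresponding one for $D=W+Z_1+Y$), conditions on the Bernoulli phase $B$, identifies the conditional law of $(T^1,T^2)$ with $\psi_i$, averages with weight $\tfrac12$, and then replaces $\psi_i$ by $\gamma_i$ or $\eta_i$ using parts (b) and (c) of Theorem~\ref{th:psigammaetaAR} on the region $v\ge u$. The phase-bookkeeping concern you flag (the re-indexing $k\mapsto n+2-k$ may or may not flip the initial AR phase, depending on the parity of $n$) is a legitimate point the paper glosses over, but it is harmless here: since $B$ is uniform on $\{1,2\}$ and independent of the service variables, a phase flip merely permutes $\psi_1$ and $\psi_2$, which leaves the symmetric mixture $\tfrac12(\psi_1+\psi_2)$ unchanged, so your argument and the paper's coincide after this observation.
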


The proof is a forward extension of the proof of Corollary~\ref{cor:Wgammaeta}.

\begin{proof}
	From $W$'s representation from (\ref{eq:W2nodes}) it follows for all $x\geq0$
	\[\begin{aligned}\P(W>x)&=\P(\max\left\{0,T^1+(Z-Y)_+, T^2+Z-Y\right\}>x)\\&=1-\P(\max\left\{0,T^1+(Z-Y)_+, T^2+Z-Y\right\}\leq x)\\&=1-\P\left(T^1\leq x- (Z-Y)_+, T^2\leq x- (Z-Y)\right)\\&=1-\sum_{i=1}^2\P\left(T^1\leq x- (Z-Y)_+, T^2\leq x- (Z-Y)\mid B=i\right) \P(B=i)\\&=1-\frac{1}{2}\E\left[(\psi_1+\psi_2)(x-(Z-Y)_+, x-(Z-Y))\right]~.\end{aligned}\]
	Since $x-(Z-Y)_+\leq  x-(Z-Y)$ and $\gamma(u,v)\leq\psi(u,v)\leq \eta(u,v)~\forall v\geq u$, the upper and lower bounds on $P(W>x)$ follow immediately. The bounds on $\P(D>x)$ follow from $D=W+Z_1+Y$.
\end{proof}

\section{Extension to Tandem Packet Networks}\label{app:Kleinrock}

		\begin{theorem}\label{th:psigammaetaKleinrock}
			Let $Y\geq 0$ and $X\geq 0$ be  two random variables  satisfying $\P(Y> 0)>0$. Let $(U,V)=(Y-X, MY-X)$
			\begin{itemize}
				\item The integral equation
				\begin{equation}\label{equationpsiKlein}
					\E\left[\1_{\{U\leq u, V\leq v \}}\psi((u-U)\wedge(v-V), v-U)\right]=\psi(u,v)
				\end{equation}
				admits a unique solution in the class of bounded functions in $\Rbar^2$ having the limit $\psi(\infty,\infty)=\lim_{u,v\to \infty}\psi(u,v)=1$, i.e.,
				\[\psi(u,v):=\P(T^1_1<u, T^2_1<v)~,\]  where
				\[T^1_{k}=\max_{k\leq i<\infty}((Y_k-X_k)+\ldots +(Y_i-X_i))\]
			\[T^2_{k}=\max_{k\leq i<\infty}((Y_k-X_k)+\ldots +(Y_i-X_i)+(M-1)\max(Y_k,\ldots , Y_i))\]
				for $k\geq1$ and $(X_1, Y_1)$, $(X_2, Y_2),   \ldots $ being i.i.d. copies of $(X, Y)$.
				\item	Assume that the function $\gamma:\Rbar^2\to (-\infty,K_\gamma]$, for some finite $K_\gamma$, satisfies
				for all  $(u,v)\in \supp(\gamma\vee 0)\subseteq \Rbar^2$
				\begin{equation}\label{pre-estimatepsiKlein}
					\E\left[\1_{\{U\leq u, V\leq v \}}\gamma((u-U)\wedge(v-V), v-U)\right]\geq \gamma(u,v)~.
				\end{equation}
				If $\gamma(\infty, \infty):=\limsup_{u,v\to \infty}\gamma(u,v)=1$  then $\psi\geq\gamma$.
				\item	Assume that the  function $\eta:\Rbar^2\to [0,\infty)$ satisfies
				for all  $(u,v)\in \supp(\psi)$ with $v\geq u$
				\begin{equation}\label{pre-estimateetaKlein}
							\E\left[\1_{\{U\leq u, V\leq v \}}\eta((u-U)\wedge(v-V), v-U)\right]\leq \eta(u,v)~.
				\end{equation}
				If	$\eta(\infty, \infty):=\liminf_{u,v\to \infty}\eta(u,v)=1$ then $\psi(u,v)\leq\eta(u,v)$ for all $v\geq u$.
			\end{itemize}
		\end{theorem}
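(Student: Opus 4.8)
The plan is to follow, almost verbatim, the proof of Theorem~\ref{th:psigammaeta} together with its part~(c) from Appendix~\ref{app:3rdpart}. Parts (b), (c) and the uniqueness claim there rest only on the upper semi-continuity / Weierstrass apparatus of Appendix~\ref{app:sc}, on Fatou's lemma applied to the bounded differences $\gamma-\psi$ and $\psi-\eta$, and on two non-degeneracy hypotheses that force the extremizer of $\Kcal$ out to $(\infty,\infty)$; none of these sees the particular shape of the map inside the integral equation. Hence the only genuinely new step is part~(a), namely that the stated $\psi$ solves \eqref{equationpsiKlein}; and, exactly as in Theorem~\ref{th:psigammaeta}, uniqueness is then obtained by invoking part~(b) with the two candidate solutions in the roles of $\psi$ and of $\gamma$ (the proof of (b) using nothing about $\gamma$ beyond boundedness, the equation, and the limit at $(\infty,\infty)$).

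For part~(a) I would extract the first pair $(X_1,Y_1)$ from the definitions of $T^1_1$ and $T^2_1$. With $U_j:=Y_j-X_j$, the term $i=1$ in $T^1_1$ is $U_1$ and each term $i\ge 2$ is $U_1$ plus a shifted partial sum, so $T^1_1=U_1+\bigl(T^1_2\vee 0\bigr)$, as in the two-queue case. In $T^2_1$ the term $i=1$ is $MY_1-X_1=V_1$, while for $i\ge 2$ the running maximum factors as $(M-1)\max\{Y_1,\dots,Y_i\}=\max\bigl\{(M-1)Y_1,\,(M-1)\max\{Y_2,\dots,Y_i\}\bigr\}$ (valid because $M\ge 1$); distributing this over the sum and maximizing over $i\ge 2$ gives
\[
T^2_1=\max\bigl\{\,V_1+\bigl(T^1_2\vee 0\bigr),\ \ U_1+T^2_2\,\bigr\}~.
\]
Passing to joint distribution functions, using that $(T^1_2,T^2_2)$ is independent of $(X_1,Y_1)$ and has the same law as $(T^1_1,T^2_1)$, and simplifying $\{T^1_1\le u\}$ and $\{T^2_1\le v\}$ exactly as in the proof of Theorem~\ref{th:psigammaeta}(a), produces \eqref{equationpsiKlein}. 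The one structural difference from $M=2$ is that the second argument of $\psi$ is $v-U$ rather than $v-V$: this is the packet-network feature that the first-stage service $Y_1$ is reused at every downstream queue, so $Y_1$ sits in both $U_1$ and $V_1$, and the branch $U_1+T^2_2$ carries $-U_1$ into the $v$-slot.

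For parts (b) and (c) I would transcribe the proof of Theorem~\ref{th:psigammaeta}(b) and the argument of Appendix~\ref{app:3rdpart}, replacing the map $(u,v)\mapsto\bigl((u-U)\wedge(v-V),\,v-V\bigr)$ by $(u,v)\mapsto\bigl((u-U)\wedge(v-V),\,v-U\bigr)$ with $(U,V)=(Y-X,\,MY-X)$. Two remarks make this work. First, $V-U=(M-1)Y\ge 0$, so $V\ge U$ a.s.\ and, since $\P(Y>0)>0$, also $\P(V>U)>0$; together with the model's standing positivity this supplies the analogues of $\P(U>0)>0$ and $\P(V>0)>0$ needed to push $a$ and $b$ to $\infty$ in the $\Kcal$-minimality step. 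Second, the image of the new map always lies in $\{v\ge u\}$, since its second-minus-first coordinate equals $(v-u)\vee(V-U)\ge 0$; this both explains and legitimizes the restriction to $\{v\ge u\}$ in part~(c), the analogue of the $v\ge u$ caveat in Theorem~\ref{th:psigammaeta}(c).

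The main obstacle I anticipate is nothing more than the bookkeeping in part~(a): the recursive decomposition of $T^2_1$, and in particular the clean splitting of the running-maximum term, is the only point that genuinely departs from the $M=2$ development. Once the identity $(M-1)\max\{a,b\}=\max\{(M-1)a,(M-1)b\}$ is in hand, the remainder of (a), all of (b) and (c), and uniqueness are mechanical transcriptions of the two-queue proofs, the sole thing to keep straight being the $-U$ versus $-V$ asymmetry of the two coordinates.
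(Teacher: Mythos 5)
Your proposal is correct and follows the paper's own proof essentially verbatim: the decomposition $T^1_1=U_1+(T^1_2\vee 0)$ and $T^2_1=\max\{V_1+(T^1_2\vee 0),\,U_1+T^2_2\}$, obtained by splitting the running maximum $(M-1)\max\{Y_1,\dots,Y_i\}$, is exactly the paper's part~(a) computation, and parts (b), (c), and uniqueness are the same transcriptions of the two-queue argument. One small imprecision: because the second coordinate of the map is $v-U$ (not $v-V$), the paper's $\Kcal$-minimality step uses $\P(U>0)>0$ to push \emph{both} $a$ and $b$ to $\infty$, so the "analogue of $\P(V>0)>0$'' you invoke is not actually needed.
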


An interesting aspect about the theorem is that the second component in the functions $\psi$, $\gamma$, and $\eta$ is $v-U$ as opposed to $v-V$, as in Theorem~\ref{th:psigammaeta}; the proof is (very) similar to that of Theorem~\ref{th:psigammaeta}:
		
		\begin{proof}
			For part (a) we first prove that the given $\psi$ satisfies \eqref{equationpsiKlein}; uniqueness will follow after proving (b).  We have
			\begin{align*}
				&\psi(u,v)=\P(T^{1}_{1}\leq u, T^{2}_{1}\leq v  )\\&=\P\left( Y_1-X_1+0\vee T^{1}_2\leq  u,   MY_1-X_1+0\vee T^{1}_{2}\leq v , Y_1-X_1+T^{2}_{2}\leq v\right)
				\\&=\P\big( Y_1-X_1\leq u,MY_1-X_1\leq v,\\&\qquad \quad  T^{1}_2\leq  (u+X_1-Y_1)\wedge (v+X_1-MY_1), T^{2}_{2}\leq v+X_1-Y_1\big)\\&=	\E\left[\1_{\{Y-X\leq u, MY-X\leq v \}}\psi((u+X-Y)\wedge(v+X-MY), v+X-Y)\right]\\&=	\E\left[\1_{\{U\leq u, V\leq v \}}\psi((u-U)\wedge(v-V), v-U)\right].
			\end{align*}
			
			To prove part (b), i.e., $\psi\geq \gamma$, define first the function
			\[f(u,v):=\limsup_{(x,y)\to (u,v)}\left(\gamma(x,y)-\psi(x,y)\right)~\forall (u,v)\in \Rbar^2~,\]
			which is upper-semi continuous and attains its maximum on any closed subset of $\Rbar^2$. Let
			\[K:=\sup_{(u,v)\in \Rbar^2}f(u,v)~.\]
			If $K\leq 0$ the proof is complete; assume otherwise that $K>0$. Define
			\[\Kcal:=\{(u,v)\in \Rbar^2: f(u,v)=K\}~, \]
			which is a closed subset of $\Rbar^2$, and
			\[a:=\min\{u\in \Rbar: \exists v\in \Rbar: (u,v)\in \Kcal \}\]
			\[b:=\min\{v\in \Rbar: (a,v)\in \Kcal \}~.\]
			Since $f(a,b)=K>0$, there exists a sequence $(a_n,b_n)\in  {\supp (\gamma\vee 0)}$ such that $(a_n,b_n)\to (a,b)$ as $n\to \infty$ and
			\begin{align*}K&=f(a,b)=\lim_{n\to \infty}(\gamma-\psi)(a_n,b_n)\\&\leq \limsup_{n\to \infty}\E\left[\1_{\{a_n\geq  U, b_n\geq V\}}(\gamma-\psi)\left((a_n-U)\wedge (b_n-V), b_n-U\right)\right]\intertext{Since $\gamma-\psi\leq K_\gamma<\infty$, we can further use Fatou's lemma}&\leq \E\left[\limsup_{n\to \infty}\1_{\{a_n\geq  U, b_n\geq V\}}(\gamma-\psi)\left((a_n-U)\wedge (b_n-V), b_n-U\right)\right]\\&\leq K\cdot\P(a=U \text{ or } b=V ) +\E\left[\1_{\{a> U, b>V\}}f\left((a-U)\wedge (b-V), b-V\right)\right]\\&\leq K\cdot\P(a\geq  U, b\geq V)~,\end{align*}
			using the definitions of $K$ and $f$. It then follows that $\P(a\geq U, b\geq V)=1$, such that necessarily
			\begin{equation} \label{abinfKlein}
				f\left((a-U)\wedge (b-V), b-U\right)=K~
			\end{equation}
			holds a.s., for the inequalities above to hold as equalities.
			
			Next we claim that $(a,b)=(\infty,\infty)$. Assume by contradiction that $a<\infty$. Then
			\eqref{abinfKlein} and  $\P(U>  0)>0$ contradict with the choice of $a$, and hence $a=\infty$. Similarly, assume by contradiction that $b<\infty$. Then \eqref{abinfKlein} and $\P(U>0)>0$ contradict with the choice of $b$, and hence $b=\infty$ as well.
			Finally, \[K=	f(\infty, \infty)=\limsup_{u,v\to \infty}(\gamma-\psi)(u,v)=0\]
			from the limiting conditions on $\gamma$ and $\psi$, and hence $\psi\geq \gamma$.
			
			We can now prove the uniqueness of $\psi$ solving for (\ref{equationpsiKlein}). Let $\psi_1$ and $\psi_2$ be two bounded solutions satisfying $\psi_i(\infty,\infty)=\lim_{u,v \to \infty}\psi_i(u,v)=1$. Applying the second part of the theorem with $\psi=\psi_i$ and $\gamma=\psi_{3-i}$ (note that the proof only needs that $\psi$ satisfies (\ref{equationpsiKlein}), is bounded, and $\psi(\infty,\infty)=\lim_{u,v \to \infty}\psi(u,v)=1$) we obtain that $\psi_i\geq \psi_{3-i}$ for $i=1,2$, and hence $\psi_1=\psi_2$.
			The proof for the last part of the theorem, i.e., $\psi(u,v)\leq \eta(u,v)$ on $v\geq u$ is also similar with the corresponding one from Theorem~\ref{th:psigammaeta}, and is omitted here.
		\end{proof}

\begin{corollary}{\sc{(Generic Upper and Lower Bounds)}}\label{cor:WgammaetaKlein}
		Consider the functions $\psi$, $\gamma$, and $\eta$ as in Theorem~\ref{th:psigammaetaKleinrock}. Then the waiting time of a job $n\to\infty$ satisfies for all $x\geq 0$
		\begin{align*}
			1-\E\left[\eta(x, x+(M-1)Y)\right]&\leq \P(W>x)=1-\E\left[\psi(x, x+(M-1)Y)\right]\notag\\&\leq 1-\E\left[\gamma(x, x+(M-1)Y)\right]~.
		\end{align*}
		The corresponding sojourn time satisfies
		\begin{align*}
		1-\E\left[\1_{\{MY\leq x\}}\eta(x-MY, x-Y)\right]&\leq \P(D>x)=1-\E\left[\1_{\{MY\leq x\}}\varphi(x-MY, x-Y)\right]\notag\\&\leq 1-\E\left[\1_{\{MY\leq x\}}\psi(x-MY, x-Y)\right]~.
		\end{align*}
	\end{corollary}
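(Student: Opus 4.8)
The plan is to follow the template of the proof of Corollary~\ref{cor:Wgammaeta}, now using the packet-network representation \eqref{eq:WKleinrock} of the end-to-end waiting time, namely $W=\max\{T^{1}_{2,n+1},T^2_{2,n+1}-(M-1)Y_1\}\vee 0$ for job $n$. First I would fix $x\geq 0$ and note that, since $x\geq 0$, the outer $\vee 0$ is harmless: $W\leq x$ is equivalent to $T^1_{2,n+1}\leq x$ and $T^2_{2,n+1}-(M-1)Y_1\leq x$. Next I would pass to the stationary limit $n\to\infty$; by the same Loynes-type monotonicity used in~\ref{sec:m2}, $T^1_{2,n+1}$ and $T^2_{2,n+1}$ increase to $T^1_2$ and $T^2_2$, which are exactly the random variables appearing in Theorem~\ref{th:psigammaetaKleinrock} with $k=2$. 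Since $(T^1_2,T^2_2)$ is independent of $Y_1$ and, by stationarity of the driving i.i.d.\ sequence, has the same law as $(T^1_1,T^2_1)$, conditioning on $Y_1$ and using $\psi(u,v)=\P(T^1_1\leq u,T^2_1\leq v)$ would give
\[
\P(W>x)=1-\P\left(T^1_2\leq x,\ T^2_2\leq x+(M-1)Y_1\right)=1-\E\left[\psi\left(x,\ x+(M-1)Y\right)\right].
\]
Since $Y\geq 0$ and $M\geq 1$, the second argument $x+(M-1)Y$ always exceeds the first argument $x$, so parts (b) and (c) of Theorem~\ref{th:psigammaetaKleinrock} apply pointwise at $(x,x+(M-1)Y)$, giving $\gamma\leq\psi\leq\eta$ there; taking expectations produces the claimed sandwich for $\P(W>x)$.

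For the sojourn time I would use that in a packet network the tagged job carries the same service time $Y_1$ through all $M$ queues, so $D=W+MY_1$ and hence $\P(D>x)=\P(W>x-MY_1)$. Conditioning on $Y_1=y$ and again using independence of $(T^1_2,T^2_2)$ from $Y_1$, I would split into two cases: if $My>x$ then $x-My<0\leq W$, so the conditional probability is $1$; if $My\leq x$ then the same computation as for $W$, together with the identity $x-My+(M-1)y=x-y$, gives the conditional probability $1-\psi(x-My,\ x-y)$. Summing the two cases and using $\P(MY>x)+\P(MY\leq x)=1$ then yields $\P(D>x)=1-\E[\1_{\{MY\leq x\}}\psi(x-MY,x-Y)]$; on the event $\{MY\leq x\}$ one has $x-Y\geq x-MY$, so $\gamma\leq\psi\leq\eta$ holds at $(x-MY,x-Y)$ as well, and taking expectations gives the bounds on $\P(D>x)$.

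I do not anticipate a serious obstacle: the whole argument is bookkeeping plus a single conditioning step, mirroring Corollary~\ref{cor:Wgammaeta} almost verbatim. The two points that need care are (i) justifying the monotone convergence $T^i_{2,n+1}\uparrow T^i_2$ and the resulting stationary limit for $W$ (standard Loynes argument, already invoked in~\ref{sec:m2}), and (ii) tracking the fact that, unlike in the two-queue tandem, the second coordinate fed to $\psi,\gamma,\eta$ is built from $v-U$ rather than $v-V$ (the remark following Theorem~\ref{th:psigammaetaKleinrock}); this is exactly why it materializes as $x+(M-1)Y$ for $W$ and as $x-Y$ for $D$, with no $Z$-type terms appearing as they do in Corollary~\ref{cor:Wgammaeta}.
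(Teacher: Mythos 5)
Your proposal is correct and takes essentially the same route as the paper: both derive $\P(W>x)=1-\E[\psi(x,x+(M-1)Y)]$ directly from the packet-network representation \eqref{eq:WKleinrock} and then apply the sandwich $\gamma\leq\psi\leq\eta$ on $v\geq u$, and both obtain the sojourn bounds from $D=W+MY_1$. The only difference is presentational — the paper compresses the sojourn-time step into one sentence, whereas you spell out the conditioning on $Y_1$ and the case split $\{MY\leq x\}$ versus $\{MY>x\}$; both rely on the same independence of $(T^1_2,T^2_2)$ from $Y_1$ and the stationarity argument you correctly flag.
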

\begin{proof}
	From $W$'s representation from \eqref{eq:WKleinrock} it follows for all $x\geq0$
	\[\begin{aligned}\P(W>x)&=\P(\max\left\{0,T^1, T^2+(M-1)Y\right\}>x)\\&=1-\P(\max\left\{0,T^1, T^2+(M-1)Y\right\}\leq x)\\&=1-\P\left(T^1\leq x, T^2\leq x+(M-1)Y\right) \\&=1-\E\left[\psi(x, x+(M-1)Y)\right]~.\end{aligned}\]
	Since $x\leq  x+(M-1)Y$ and $\gamma(u,v)\leq\psi(u,v)\leq \eta(u,v)~\forall v\geq u$, the upper and lower bounds on $P(W>x)$ follow immediately. The bounds on $\P(D>x)$ follow from $D=W+MY$.
\end{proof}

\end{document}